\theoremstyle{definition} 
\newtheorem{theorem}{Theorem}[section]
\newtheorem{definition}[theorem]{Definition}
\newtheorem{lemma}[theorem]{Lemma}
\newtheorem{fact}[theorem]{Fact}
\newtheorem{prop}[theorem]{Proposition}
\newtheorem*{theorem*}{Theorem}
\newtheorem*{claim*}{Claim}
\newtheorem*{subclaim*}{Subclaim}
\newtheorem{question}[theorem]{Question}
\newtheorem*{lemma*}{Lemma} 
\newtheorem*{prop*}{Proposition}
\newcommand{\mbb}{\mathbb} 
\newcommand{\mbbA}{\mathbb{A}}
\newcommand{\mbbB}{\mathbb{B}}
\newcommand{\mbbC}{\mathbb{C}}
\newcommand{\mbbD}{\mathbb{D}}
\newcommand{\mbbL}{\mathbb{L}}
\newcommand{\mbbP}{\mathbb{P}}
\newcommand{\mbbQ}{\mathbb{Q}} 
\newcommand{\mbbR}{\mathbb{R}}
\newcommand{\ordD}{(D, \leq_D)}
\newcommand{\ordE}{(E, \leq_E)}
\newcommand{\ordP}{(P, \leq_P)}
\newcommand{\ordQ}{(Q, \leq_Q)}
\newcommand{\mcalM}{\mathcal{M}}
\newcommand{\mcalN}{\mathcal{N}}
\newcommand{\mcalP}{\mathcal{P}}
\newcommand{\Coll}{\mathrm{Coll}}
\newcommand{\collkappa}{\Coll(\omega,{<} \kappa)} 
\newcommand{\collalpha}{\Coll(\omega,\alpha)}
\newcommand{\restrict}{\upharpoonright} 
\newcommand{\CD}{\mathbb{C}\ast\dot{\mathbb{D}}}
\newcommand{\cof}{\mathrm{cof}}
\newcommand{\collapse}[1]{\Coll (\omega, {#1})} 
\newcommand{\ZFC}{\mathsf{ZFC}} 
\newcommand{\ZF}{\mathsf{ZF}} 
\newcommand{\CH}{\mathsf{CH}}
\newcommand{\AD}{\mathsf{AD}} 
\newcommand{\AC}{\mathsf{AC}} 
\newcommand{\DC}{\mathsf{DC}}
\newcommand{\dom}{\mathrm{dom}}
\newcommand{\stem}{\mathrm{stem}}
\newcommand{\suc}{\mathrm{Suc}}
\newcommand{\Ord}{\mathrm{Ord}}
\newcommand{\WO}{\mathrm{WO}}
\newcommand{\LR}{L(\mathbb{R})}
\newcommand{\VRVG}{V( \mbbR^{V[G]})}
\newcommand{\NWD}{\mathrm{NWD}}
\newcommand{\cNWD}{\mathrm{cNWD}}
\newcommand{\meagerideal}{\left(\mathcal{M}, \subseteq \right)}
\newcommand{\nullideal}{\left(\mathcal{N}, \subseteq \right)}
\newcommand{\comeagerfilter}{ ( \hat{\mathcal{M}}, \supseteq ) }
\newcommand{\dominating}{\left(\omega^\omega, \leq^* \right)}
\newcommand{\omegaone}{\left(\omega_1, \leq \right)}
\title[Generalized Tukey reducibility between $\sigma$-directed sets]{Generalized Tukey reducibility between $\sigma$-directed sets}
\author{Hiroshi Sakai}
\thanks{Graduate School of Mathematical Sciences, The University of Tokyo,  
3‑8‑1 Komaba, Meguro‑ku, Tokyo, 153‑8914, Japan. 
E-mail : hsakai@ms.u-tokyo.ac.jp}
\author{Toshimasa Tanno}
\thanks{Graduate School of System Informatics, Kobe University, 1-1 Rokko-dai, Nada-ku, Kobe, 657-8501, Japan. 
E-mail : 211x503x@stu.kobe-u.ac.jp.
This author was supported by  JST SPRING, Grant Number JPMJSP2148.}
\date{}
\begin{document}

\begin{abstract}
  We introduce the pre-Tukey reducibility, a generalization of the Tukey reducibility between directed sets that works well in $\ZF$.
  We investigate the pre-Tukey reducibility between several $\sigma$-directed sets under assumptions on sets of reals, which hold in the Solovay model and in $\LR$ satisfying $\AD$.
\end{abstract}

\maketitle

\section{Introduction}

In set theory, directed sets form an important class of objects. 
In order to compare directed sets, Tukey introduced a fundamental notion known as Tukey reducibility \cite{tukey1940convergence}.
He showed that if two directed sets are equivalent with respect to the Tukey reducibility then they are cofinally similar, that is, both can be embedded cofinally into a directed set. 
This reducibility and the generated equivalence classes (called cofinal types) have been extensively studied within the framework of $\ZFC$.
In particular, in the area of cardinal characteristics, the Tukey reducibility among various $\sigma$-directed orders on the reals has been thoroughly investigated as a means of comparing their cofinalities, the smallest cardinalities of cofinal subsets.
For example, the Tukey relations 
$(\omega^\omega, \leq^*)\preceq_T (\mcalM, \subseteq) \preceq_T (\mcalN, \subseteq)$ 
are provable in $\ZFC$, and it implies the inequality of cardinal characteristics $\mathfrak{d}\leq \cof (\mcalM) \leq \cof (\mcalN)$.
Here $\leq^*$ is the usual domination relation on $\omega^\omega$, that is, $x \leq^* y$ if and only if $x(n) \leq y(n)$ for all but finitely many $n < \omega$. $\mcalM$ and $\mcalN$ denote the meager ideal and the null ideal over $2^\omega$.

The definition of cofinalities and the basics of the Tukey reducibility heavily rely on $\AC$.
To compare the cofinal types of directed sets in a context without $\AC$, we introduce the notion of the pre-Tukey reducibility.
For directed sets $\ordD$ and $\ordE$, we say that $\ordD$ is pre-Tukey reducible to $\ordE$ and write $\ordD \preceq_{pT} \ordE$ if $\ordD$ is Tukey reducible to a completion of $\ordE$, the poset of non-empty $\leq_E$-upward closed subsets of $E$ ordered by reverse inclusion.
We also verify that the notion of the pre-Tukey reducibility works well without $\AC$.
That is, without using $\AC$ we show that if  directed sets are equivalent with respect to pre-Tukey reducibility then they are cofinally similar.

 We also investigate the pre-Tukey reducibility among several concrete directed sets.
 More precisely, we investigate $\sigma$-directed sets

\begin{center}
  $( \omega^\omega , \leq^* ) , \ ( \mcalM , \subseteq ) , \ ( \mcalN , \subseteq ) , \ ( \omega_1 , \leq ) , \ ( [ \omega^\omega ]^\omega , \subseteq )$
\end{center}
in models of $\ZF + \DC$ with the condition $(\star)$, which hold in the Solovay model and in $L(\mathbb{R})$ satisfying $\AD$.
$(\star)$ is the conjunction of the following two conditions:

\begin{center}
  \begin{enumerate}
    \item $\omega_1$ is inaccessible in $L[S]$ for any set $S$ of ordinals.
    \item Every subset of $\omega^\omega$ is $\infty$-Borel.
  \end{enumerate}
\end{center}

The pre-Tukey relations 
$(\omega^\omega, \leq^*) \preceq_{pT} (\mcalM, \subseteq) \preceq_{pT} (\mcalN, \subseteq)$,
which corresponds to the inequalities of cardinal characteristics 
$\mathfrak{d} \leq \cof (\mcalM) \leq \cof (\mcalN)$,
can be proved in $\ZF + \DC$.
We prove the pre-Tukey relations between above five $\sigma$-directed sets in models of $\ZF+\DC+(\star)$, which are illustrated in Figure \ref{fig:preTukey_directed}.
In the figure, $D \rightarrow E$ means that $D \preceq_{pT} E$, and $D \dashrightarrow E$ means that $D \not\preceq_{pT} E$ for directed sets $D$ and $E$.
In particular, in models of $(\star)$, the cofinal types of the above directed sets are different from each other.

  \begin{figure}[ht] \label{fig:preTukey_directed}
  \begin{tikzpicture}[node distance=2cm, auto]

  \node (A) at (0, 0) {$(\omega^\omega, \leq^*)$};
  \node (B) at (2, 0) {$(\mcalM, \subseteq)$};
  \node (C) at (4, 0) {$(\mcalN, \subseteq)$};
  \node (D) at (6, -1) {$( [ \omega^\omega ]^\omega, \subseteq )$};
  \node (E) at (2, -2) {$(\omega_1, \leq)$};
  
  \draw[->] (A) -- (B) node[midway, above] {};
  \draw[->] (B) -- (C) node[midway, above] {};
  \draw[->] (C) -- (D) node[midway, above] {};
  \draw[->] (E) -- (D) node[midway, above] {};
  
  \draw[dashed, <-] (A) to[bend right=30] (B);
  \draw[dashed, <-] (B) to[bend right=30] (C);
  \draw[dashed, <-] (C) to[bend right=30] (D);
  \draw[dashed, <-] (E) to[bend right=30] (D);
  
  \foreach \x in {A, B, C} {
      \draw[dashed, <->] (\x) -- (E);
  }
  \end{tikzpicture}
  \caption{pre-Tukey relations in $\ZF+\DC+(\star)$}
  \end{figure}
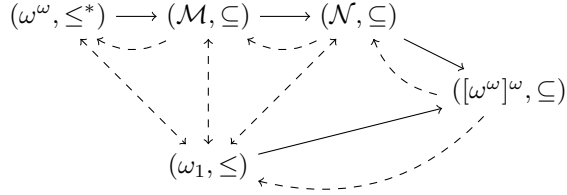

The base theory of Set Theory in this paper is $\ZF + \DC$. We work in $\ZF + \DC$, and all theorems, lemmas and facts are those of $\ZF + \DC$, unless otherwise stated.

\section{Preliminaries}
Here we present our notation and basic facts used in this paper.

First we give our notation on ordered sets.
We deal with several kinds of orderings.
A \textit{pre-ordering} is a binary relation which is reflexive and transitive.
A \textit{poset} is a pre-ordered set.
A \textit{partial ordering} is an antisymmetric pre-ordering.
A \textit{directed set} ($\sigma$-directed set) is a poset in which any finite (countable) elements have an upper bound.

Let $\ordP$ be a poset.
For $p,q \in P$ we write $p <_P q$ if $p \leq_P q$, and $q \not\leq_P p$.
For $p\in P$ let $p\uparrow_{\leq_P}=\{q\in P\mid p\leq_P q\}$ and $p\downarrow_{\leq_P} =\{q\in P\mid q\leq_P p\}$.

Let $\ordP$ and $\ordQ$ be posets and $f$ be a function from $P$ to $Q$.
$f$ is called an \textit{embedding} if 
$f(p_0) \leq_{Q} f(p_1)$ exactly when $p_0 \leq_P p_1$ for any $p_0, p_1 \in P$.
We say that $f$ is \textit{cofinal} if $f[P]$ is cofinal in $Q$.
$P$ is (\textit{cofinally}) \textit{embeddable} into $Q$ if there is a (cofinal) embedding from $P$ to $Q$.

\vskip.5\baselineskip

Next, we present our notation on forcings. In this paper, we only deal with forcings over models of $\ZFC$.

Let $\mbb{P}$ be a forcing notion and $\dot{x}$ be a $\mbb{P}$-name.
Then, for a $\mbb{P}$-generic filter $G$ over $V$, let $\dot{x}[G]$ denote the evaluation of $\dot{x}$ by $G$.

For an inaccessible cardinal $\kappa$, $\collkappa$ denotes the L\'{e}vy collapse which collapses $\kappa$ to $\omega_1$. That is, $\collkappa$ consists of all finite partial functions $p$ from $\kappa \times \omega$ to $\kappa$ such that $p( \alpha , n ) < \alpha$ for all $( \alpha , n ) \in \dom (p)$. Its order is reverse inclusion. Recall that $\collkappa$ is homogeneous.

Let $\mbbC$ denote the Cohen forcing notion, i.e.~$2^{< \omega}$ ordered by reverse inclusion.
Let $\mbbB$ be the random forcing notion. That is, $\mbbB$ consists of all Borel subsets of $2^\omega$ which has a positive Lebesgue measure, and $\mbbB$ is ordered by inclusion.

Let $W$ be a transitive inner model of $\ZFC$. A \textit{Cohen real} over $W$ is $x \in 2^\omega$ such that $x = \bigcup G$ for some $\mbbC$-generic filter over $W$. A \textit{random real} over $W$ is $x \in 2^\omega$ such that $\{ x \} = \bigcap H$ for some $\mbbB$-generic filter over $W$.
Recall that $x \in 2^\omega$ is a Cohen real over $W$ if and only if $x$ belongs to all Borel comeager sets coded in $W$.
Also, $x \in 2^\omega$ is a random real over $W$ if and only if $x$ belongs to all Borel conull sets coded in $W$.
We let $C^W$ and $R^W$ be the collections of all Cohen reals and random reals over $W$, respectively.

\section{Pre-Tukey relation}

In this section, we introduce the pre-Tukey relation, which weakens the Tukey relation and works well without $\AC$.
First we recall the original Tukey relation.

\begin{definition}
  Let $(D, \leq_D)$ and $(E, \leq_E)$ be directed sets.
  A function $f \colon D\rightarrow E$ is a \textit{Tukey map} if $f$ maps unbounded sets in $(D, \leq_D)$ into unbounded sets in $(E, \leq_E)$.
  A function $f \colon D\rightarrow E$ is a \textit{convergent map} if $f$ maps cofinal sets in $(D, \leq_D)$ into cofinal sets in $(E, \leq_E)$.

  We say that $(D,\leq_D)$ is \textit{Tukey reducible} to $(E, \leq_E)$ if there is a Tukey map from $(D,\leq_D)$ to $(E, \leq_E)$, and we write $(D, \leq_D) \preceq_T (E,\leq_E)$.
  Let $(D, \leq_D) \equiv_T (E, \leq_E)$ if $(D, \leq_D) \preceq_T (E, \leq_E)$ and $(E, \leq_E) \preceq_T (D, \leq_D)$.
\end{definition}

$\equiv_T$ is an equivalence relation between directed sets.
An equivalence class of $\equiv_T$ is called a \emph{cofinal type}.

$f \colon D\rightarrow E$ is Tukey if and only if
\[
  \forall e\in E\  \exists d\in D\  \forall d'\in D \ 
    (f(d')\leq_E e \Rightarrow d'\leq_D d) \, ,
\]
and $f$ is convergent if and only if 
\[
  \forall e\in E\  \exists d\in D\  \forall d'\in D \ 
    (d\leq_D d' \Rightarrow e\leq_E f(d')) \, .
\]

The following are basic facts on the Tukey reducibility.

\begin{fact}[\cite{tukey1940convergence}] \label{converse}
  Assume $\AC$.
  Let $(D, \leq_D)$ and $(E, \leq_E)$ be directed sets.
  $\left(D, \leq_D \right) \preceq_T (E, \leq_E)$ if and only if there is a convergent map from $(D, \leq_D)$ to $(E, \leq_E)$.
\end{fact}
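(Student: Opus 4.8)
The plan is to read off the equivalence directly from the two pointwise characterizations of Tukey and convergent maps displayed above, the key point being that the witnessing convergent map runs in the opposite direction to the Tukey map; the only substantive ingredient is a single appeal to $\AC$ in each direction to select witnesses, which is precisely the feature that the pre-Tukey relation is later designed to circumvent.

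For one direction, suppose $f \colon D \to E$ is a Tukey map. By its characterization, for every $e \in E$ the set $\{\, d' \in D : f(d') \leq_E e \,\}$ has an upper bound in $\ordD$, so the set $S_e$ of all such upper bounds is non-empty. Using $\AC$, pick $g(e) \in S_e$ for each $e \in E$; this yields $g \colon E \to D$ satisfying $f(d') \leq_E e \Rightarrow d' \leq_D g(e)$ for all $d', e$. To see that $g$ is a convergent map from $\ordE$ to $\ordD$, I use the convergent characterization with the roles of $D$ and $E$ interchanged: given $d \in D$, set $e := f(d)$, and note that whenever $e \leq_E e'$ we have $f(d) \leq_E e'$, hence $d \leq_D g(e')$ by the defining property of $g$. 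This is exactly the condition witnessing convergence of $g$.

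For the other direction, suppose $g \colon E \to D$ is convergent. By its characterization, for every $d \in D$ the set $T_d := \{\, e \in E : d \leq_D g(e') \text{ whenever } e \leq_E e' \,\}$ is non-empty. Using $\AC$, pick $f(d) \in T_d$ for each $d \in D$, giving $f \colon D \to E$ with $f(d) \leq_E e' \Rightarrow d \leq_D g(e')$ for all $d, e'$. To check that $f$ is Tukey, fix $e \in E$ and put $d := g(e)$; if $d' \in D$ satisfies $f(d') \leq_E e$, then instantiating the defining property of $f(d')$ at $e' := e$ gives $d' \leq_D g(e) = d$. Thus $f(d') \leq_E e$ implies $d' \leq_D d$ for all $d'$, which is the Tukey characterization.

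Since each direction is just a rearrangement of the relevant characterization, the proof presents no serious technical obstacle; the one conceptual point to get right is the reversal of direction between the Tukey map and the convergent map, and the one indispensable hypothesis is $\AC$, used to choose the upper bound $g(e)$ (respectively the threshold $f(d)$) uniformly in $e$ (respectively $d$). That there is in general no canonical such choice in an arbitrary directed set is exactly why one later replaces $E$ by its completion and works with $\preceq_{pT}$ instead.
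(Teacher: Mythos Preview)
The paper does not supply its own proof of this Fact; it is merely cited from Tukey with no argument given. Your proof is correct and is the standard one --- in fact it exactly parallels the paper's later proof of the pre-Tukey analogue (Lemma~\ref{Tukey_map}), where a pre-Tukey map $D\to\mcalP(E)$ is converted into a pre-convergent map $E\to\mcalP(D)$ and vice versa, with $\AC$ replacing the passage to set-valued maps.

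One remark: you correctly observed that the convergent map runs from $\ordE$ to $\ordD$, opposite to the Tukey map. The paper's statement of the Fact appears to contain a misprint (``from $(D,\leq_D)$ to $(E,\leq_E)$'' should read ``from $(E,\leq_E)$ to $(D,\leq_D)$''); this is confirmed by the direction in Lemma~\ref{Tukey_map}(3) and by the way convergent maps are used throughout \S4. As written, the stated equivalence would assert $D\preceq_T E \Leftrightarrow E\preceq_T D$, which is false. So your reversal is not a deviation from the intended content but a correction of it.
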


\begin{fact}[\cite{tukey1940convergence}] \label{cof_embedding}
  Assume $\AC$.
  Let $(D, \leq_D)$ and $(E, \leq_E)$ be directed sets.
  Then $(D, \leq_D) \equiv_T (E, \leq_E)$ if and only if there is a directed set $(P, \leq_P)$ into which both $(D, \leq_D)$ and $(E, \leq_E)$ are cofinally embeddable.
\end{fact}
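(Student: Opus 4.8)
My plan is to prove the two implications separately; the forward (easy) direction only needs the fact that a cofinal embedding is simultaneously order-reflecting and cofinal, while the converse requires actually building the common directed set.

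For the ``if'' direction, suppose cofinal embeddings $\iota \colon D \to P$ and $\jmath \colon E \to P$ are given. Since $\equiv_T$ is an equivalence relation, it suffices to show $D \equiv_T P$ and, symmetrically, $E \equiv_T P$. First I would check that a cofinal embedding is always a Tukey map: if $A \subseteq D$ is unbounded and $\iota[A] \leq_P p$, then choosing $d$ with $p \leq_P \iota(d)$ by cofinality and using that $\iota$ reflects the order gives $A \leq_D d$, a contradiction; hence $D \preceq_T P$. For the reverse $P \preceq_T D$, I would use $\AC$ and cofinality of $\iota[D]$ to pick, for each $p \in P$, some $r(p) \in D$ with $p \leq_P \iota(r(p))$, and then verify $r$ is a Tukey map by the same order-reflection argument (if $r[B] \leq_D d$ then $B \leq_P \iota(d)$). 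This gives $D \equiv_T P$, and likewise $E \equiv_T P$, so $D \equiv_T E$.

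For the ``only if'' direction, assume $D \equiv_T E$. From a Tukey map $\phi \colon D \to E$ witnessing $D \preceq_T E$ I would use $\AC$ to extract the associated Galois--Tukey data $\phi^\ast \colon E \to D$ with $\phi(d) \leq_E e \Rightarrow d \leq_D \phi^\ast(e)$, and symmetrically $\psi, \psi^\ast$ from $E \preceq_T D$; Fact~\ref{converse} supplies the convergent maps I would use to get cofinality of images. The goal is to amalgamate $D$ and $E$ into a single directed set $P$ in which both sit cofinally. The natural candidates are a suitable cofinal sub-poset of the product $D \times E$ consisting of pairs ``linked'' by these maps, or the disjoint union $D \sqcup E$ carrying the cross-relations $d \leq_P e \Leftrightarrow \phi(d) \leq_E e$ and $e \leq_P d \Leftrightarrow \psi(e) \leq_D d$ (then passed to the antisymmetric quotient). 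One then realizes $D$ and $E$ as the two pieces, showing each is cofinal using convergence and order-faithful using the defining implications of the connection.

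The main obstacle is precisely this construction and its verification, and it is genuine: a Tukey reduction need not be witnessed by any \emph{monotone} map, so the graph $d \mapsto (d,\phi(d))$ is in general not an order-embedding into the product (indeed $D \times E$ with the coordinatewise order is \emph{not} in general a valid common directed set), and the naive cross-relations on $D \sqcup E$ can fail transitivity or collapse the two copies along a zig-zag. The delicate point is therefore to choose the linking data coherently and to check that $P$ is a directed partial order in which the canonical maps $D \to P$ and $E \to P$ are \emph{simultaneously} cofinal and order-faithful; what must carry this argument is not monotonicity but the defining inequalities of the two Galois--Tukey connections. Once $P$ is shown to be a directed set with $D$ and $E$ cofinal and order-embedded, the conclusion follows.
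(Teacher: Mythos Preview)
The paper does not give its own proof of this Fact; it is cited from Tukey. However, the paper \emph{does} prove the choiceless analogue for $\equiv_{pT}$ (the unlabelled Theorem after Lemma~\ref{preTukey_imply_Tukey}), and under $\AC$ that theorem specializes to Fact~\ref{cof_embedding}. So the relevant comparison is with that proof.

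Your ``if'' direction is correct and is essentially how the paper argues (there via a pre-Tukey map rather than through $D\equiv_T P\equiv_T E$, but this is cosmetic).

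For the ``only if'' direction you correctly locate the disjoint-union construction and the transitivity obstacle, but the concrete cross-relation you propose does not work. With $d\leq_P e\ \Leftrightarrow\ \phi(d)\leq_E e$, transitivity already fails at $d_0\leq_D d_1\leq_P e$: from $\phi(d_1)\leq_E e$ you would need $\phi(d_0)\leq_E e$, but $\phi$ is not monotone. The dual choice $d\leq_P e\ \Leftrightarrow\ d\leq_D\phi^\ast(e)$ fails symmetrically at $d\leq_P e_0\leq_E e_1$, since $\phi^\ast$ is not monotone either. So neither Galois--Tukey map alone can define the cross-relation.

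The paper's resolution (in the pre-Tukey proof) is exactly the missing idea you gesture at but do not pin down: one must combine the Tukey-type and convergent-type data \emph{before} defining the order. Concretely, from the two Tukey maps one derives convergent duals and sets $\pi(d)=\pi_0(d)\cap\sigma_1(d)$, $\sigma(e)=\pi_1(e)\cap\sigma_0(e)$, so that the two-sided implications
\[
\pi(d)\cap e{\downarrow}\neq\emptyset\ \Rightarrow\ \sigma(e)\subseteq d{\uparrow},\qquad
\sigma(e)\cap d{\downarrow}\neq\emptyset\ \Rightarrow\ \pi(d)\subseteq e{\uparrow}
\]
hold. The cross-relation is then the existentially quantified $d\leq_P e\ \Leftrightarrow\ \exists d'\geq_D d\ \bigl(e\in\pi(d')\bigr)$ (and symmetrically for $e\leq_P d$). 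The $\exists d'\geq_D d$ gives downward closure in $d$ for free, handling $d_0\leq_D d_1\leq_P e$; the two implications above handle the mixed zig-zag $d_0\leq_P e\leq_P d_1$. Cofinality of each piece then follows immediately from $\pi(d),\sigma(e)\neq\emptyset$. Your sketch identifies the right shape of the argument but stops short of this intersection-plus-existential trick, which is precisely what makes transitivity go through.
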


We introduce a generalization of Tukey relation, which we call pre-Tukey relation.
\begin{definition}\label{completion}
  Let $(D, \leq_D)$ be a directed set.
  $(D, \leq_D)^*$ is the directed set of all non-empty $\leq_D$-upward closed subsets of $D$ ordered by reverse inclusion. 
\end{definition}

We note that $(D, \leq_D)^*$ is a completion of $(D, \leq_D)$. 
In fact, $(D, \leq_D)^*$ is a  directed set in which every bounded subset has a least upper bound, and 
a map $d \mapsto \{d'\in D\mid d'\geq_D d\}$ is a cofinal embedding from $(D, \leq_D)$ to $(D, \leq_D)^*$.

\begin{definition}\label{Tukey_completion}
  Let $(D, \leq_D)$ and $(E, \leq_E)$ be directed sets.
  We define $(D, \leq_D)\preceq_{pT} (E, \leq_E)$ if $(D, \leq_D) \preceq_T (E, \leq_E)^*$.
  We call this relation $\preceq_{pT}$ \textit{pre-Tukey relation}.
  We write $\ordD \equiv_{pT} \ordE$ if $\ordD \preceq_{pT} \ordE$ and $\ordE \preceq_{pT} \ordD$.
\end{definition}

It is easy to see that $\preceq_{pT}$ is a pre-ordering, and $\equiv_{pT}$ is an equivalence relation. In the rest of this section, we study basics on the pre-Tukey relation.

The pre-Tukey relation is characterized by the existence of maps that generalize Tukey maps and convergent maps.

\begin{definition}
  Let $\ordD$ and $\ordE$ be directed sets.
  \begin{enumerate}
      \item A \textit{pre-Tukey map} from $\ordD$ to $\ordE$ is a map $\pi \colon D \rightarrow \mcalP(E)$ such that $\pi (d) \neq \emptyset$ for any $d \in D$, and
      \[
      \forall e\in E\  \exists d\in D\  \forall d'\in D \ 
      (\pi(d')\cap e\downarrow_E \neq \emptyset \Rightarrow d' \leq_D d) \, .
      \]
      \item A \textit{pre-convergent map} from $\ordD$ to $\ordE$ is a map $\sigma \colon D \rightarrow \mcalP(E)$ such that $\sigma (d) \neq \emptyset$ for any $d \in D$, and
      \[
      \forall e \in E\  \exists d\in D\  \forall d'\in D\ 
      (d \leq_{D} d' \Rightarrow \sigma(d')\subseteq e \uparrow_{E}) \, .
      \]
  \end{enumerate}
\end{definition}

\begin{lemma}\label{Tukey_map}
  The following are equivalent for directed sets $\ordD$ and $\ordE$.
  \begin{enumerate}
    \item $\ordD \preceq_{pT} \ordE$.
    \item There is a pre-Tukey map from $\ordD$ to $\ordE$.
    \item There is a pre-convergent map from $\ordE$ to $\ordD$.
  \end{enumerate}
\end{lemma}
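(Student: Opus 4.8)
The plan is to prove the cycle of implications by a direct analysis of the completion $\ordE^*$ together with a choice-free duality, arranged so that $\AC$ is never invoked (indeed $\ZF$ suffices). I would establish $(1)\Leftrightarrow(2)$ by unwinding the definition of $\preceq_{pT}$ and applying the characterization of Tukey maps stated just after the definition of the Tukey relation, and then prove $(2)\Leftrightarrow(3)$ by a symmetric set-valued construction that mirrors the classical convergent/Tukey duality of Fact~\ref{converse} while avoiding the selection of witnesses.

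For $(1)\Rightarrow(2)$, recall that members of $\ordE^*$ are non-empty $\leq_E$-upward closed subsets of $E$ ordered by reverse inclusion, so $A\leq_{(E,\leq_E)^*} B$ iff $A\supseteq B$. Given a Tukey map $f\colon D\to\ordE^*$, I would set $\pi(d)=f(d)$, viewed as a subset of $E$; it is non-empty because members of $\ordE^*$ are. The key observation is that, since each $f(d')$ is upward closed, for $e\in E$ one has $\pi(d')\cap e\downarrow_E\neq\emptyset$ exactly when $e\in f(d')$, i.e.\ exactly when $f(d')\supseteq e\uparrow_E$, i.e.\ $f(d')\leq_{(E,\leq_E)^*} e\uparrow_E$. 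Feeding the element $e\uparrow_E\in\ordE^*$ into the Tukey characterization then yields precisely the pre-Tukey clause at $e$. Conversely, for $(2)\Rightarrow(1)$, from a pre-Tukey map $\pi$ I would take $f(d)=\pi(d)\uparrow_E\in\ordE^*$. To verify the Tukey condition at an arbitrary $A\in\ordE^*$, I would fix a single $e_0\in A$ (legitimate in $\ZF$ since $A\neq\emptyset$) and invoke the pre-Tukey clause for $e_0$: if $f(d')\supseteq A$ then $e_0\in f(d')=\pi(d')\uparrow_E$, so $\pi(d')\cap e_0\downarrow_E\neq\emptyset$ and the bound follows. The point worth flagging is that testing a single element $e_0$ of the upward-closed set $A$ already suffices, which is why the passage from elements $e\in E$ to arbitrary $A\in\ordE^*$ is painless.

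For the duality $(2)\Leftrightarrow(3)$, I would define the two maps canonically rather than by choosing witnesses. From a pre-Tukey $\pi$, put
\[
\sigma(e)=\{\,d\in D : \forall d'\in D\,(\pi(d')\cap e\downarrow_E\neq\emptyset \Rightarrow d'\leq_D d)\,\};
\]
the pre-Tukey clause says exactly that each $\sigma(e)$ is non-empty, and to check that $\sigma$ is pre-convergent from $\ordE$ to $\ordD$ I would, given $d_0\in D$, pick one $e_0\in\pi(d_0)$ and verify straight from the definition of $\sigma$ that $e_0\leq_E e'$ forces $\sigma(e')\subseteq d_0\uparrow_D$. Symmetrically, from a pre-convergent $\sigma$ I would put
\[
\pi(d)=\{\,e\in E : \forall e'\in E\,(e\leq_E e' \Rightarrow \sigma(e')\subseteq d\uparrow_D)\,\},
\]
whose non-emptiness is the pre-convergent clause, and recover the pre-Tukey condition at $e_0$ by picking one $d_0\in\sigma(e_0)$.

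The main obstacle, and really the whole design of the argument, is to keep everything inside $\ZF$: the set-valued definitions are engineered precisely so that each ``there is a bound'' in the Tukey and convergent conditions becomes ``this explicitly defined set is non-empty,'' after which every existential quantifier in the verification is discharged by choosing a single element of a single non-empty set, never a choice function over a family. Apart from that, the only thing requiring care is the bookkeeping of the reverse-inclusion order on $\ordE^*$ and the directions of $\uparrow$ and $\downarrow$; once these are pinned down, the four verifications are routine and symmetric.
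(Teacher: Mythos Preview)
Your proposal is correct and follows essentially the same approach as the paper: the same identification of $\pi$ with a map into $\ordE^*$ for $(1)\Leftrightarrow(2)$ (via upward closure and testing at $e\uparrow_E$), and the identical canonical set-valued dual maps $\sigma(e)$ and $\pi(d)$ for $(2)\Leftrightarrow(3)$. The only differences are cosmetic: you spell out the verification for an arbitrary $A\in\ordE^*$ in $(2)\Rightarrow(1)$ and emphasize the $\ZF$-care more explicitly, but the mathematical content is the same.
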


\begin{proof}
  $(1)\Rightarrow (2) \colon $
  Suppose $\pi \colon D \rightarrow E^*$ is a Tukey map, where $E^*$ is the set of all non-empty $\leq_E$-upwards closed sets.
  For each $e \in E$, if $d \in D$ witnesses that $\pi \colon D \rightarrow E^*$ is Tukey for $e \uparrow_{\leq_E} \in E^*$, then $d$ witnesses that $\pi \colon D \rightarrow \mcalP(E)$ is pre-Tukey for $e$.

  $(2)\Rightarrow (1) \colon $
  Suppose $\pi \colon D\rightarrow \mcalP(E)$ is a pre-Tukey map.
  Define $f \colon D\rightarrow E^*$ by letting $f(d)$ be the $\leq_E$-upward closure of $\pi(d)$.
  Then it is easy to check that $f$ is a Tukey map from $\ordD$ to $\ordE^*$.

  $(2)\Rightarrow (3) \colon $
  Suppose $\pi \colon D\rightarrow \mcalP(E)$ is a pre-Tukey map.
  Define $\sigma \colon E\rightarrow \mcalP(D)$ by 
  \[
    \sigma(e)=\{d\in D\mid \forall d'\in D\  
    (\pi(d')\cap e\downarrow_{\leq_E} \neq \emptyset \Rightarrow d'\leq_D d)\} \, .
  \]
  Since $\pi$ is a pre-Tukey map, $\sigma(e)$ is non-empty for any $e\in E$.
  It is easy to check that each $e \in \pi(d)$ witnesses that $\sigma$ is a pre-convergent map for each $d \in D$.
  
  $(3)\Rightarrow (2) \colon $
  Suppose $\sigma \colon E\rightarrow \mcalP(D)$ is a pre-convergent map.
  Define $\pi \colon D\rightarrow \mcalP(E)$ by
  \[
    \pi(d)=\{e\in E\mid \forall e'\in E\ 
    (e\leq_{E} e' \Rightarrow \sigma(e')\subseteq d\uparrow_{\leq_D})\} \ .
  \]
 Since $\sigma$ is a pre-convergent map, $\pi(d)$ is non-empty for any $d\in D$.
 It is easy to check that each $d \in \sigma (e)$ witnesses that $\pi$ is a pre-Tukey map for each $e \in E$.
\end{proof}

For a pre-Tukey map $\pi$ from $\ordD$ to $\ordE$, let $\pi' \colon D \rightarrow \mcalP (E)$ be a function such that $\pi ' (d)$ is the $\leq_E$-upward closure of $\pi (d)$.
Then $\pi'$ is also a pre-Tukey map.
The same holds for pre-convergent maps.

Next we observe the relationships between Tukey (convergent) maps and pre-Tukey (pre-convergent) maps.

\begin{lemma}\label{Tukey_imply_preTukey}
  Let $\ordD$ and $\ordE$ be directed sets.
  For $f \colon D\rightarrow E$ define $\pi_f \colon D\rightarrow \mcalP(E)$ by $\pi_f (d)=f(d) \uparrow_{\leq_E}$.
  \begin{enumerate}
    \item If $f$ is a Tukey map, then $\pi_f$ is a pre-Tukey map.
    \item If $f$ is a convergent map, then $\pi_f$ is a pre-convergent map.
  \end{enumerate}
\end{lemma}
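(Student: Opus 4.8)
The plan is to verify each part directly by unwinding the definitions, observing that for $\pi = \pi_f$ the set-theoretic hypotheses appearing in the pre-Tukey and pre-convergent conditions collapse exactly to the corresponding hypotheses in the Tukey and convergent conditions. First I would record that $\pi_f(d)$ is always non-empty: since $f(d) \leq_E f(d)$ by reflexivity, we have $f(d) \in f(d)\uparrow_{\leq_E} = \pi_f(d)$. This disposes of the non-emptiness requirement in both definitions simultaneously.

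For part (1), the key step is the equivalence, for any $d' \in D$ and $e \in E$,
\[
\pi_f(d') \cap e\downarrow_{\leq_E} \neq \emptyset \ \Longleftrightarrow\ f(d') \leq_E e \, .
\]
The implication from left to right uses transitivity: any element of the intersection lies $\leq_E$-above $f(d')$ and $\leq_E$-below $e$, so $f(d') \leq_E e$. The reverse implication uses reflexivity, since $f(d') \leq_E e$ places $e$ itself into $\pi_f(d') \cap e\downarrow_{\leq_E}$. Granting this equivalence, the pre-Tukey condition for $\pi_f$ at a given $e$ becomes verbatim the Tukey condition for $f$ at $e$, so the witness $d$ supplied by the Tukey map serves unchanged as the pre-Tukey witness.

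For part (2), the analogous key step is the equivalence
\[
\pi_f(d') \subseteq e\uparrow_{\leq_E} \ \Longleftrightarrow\ e \leq_E f(d') \, .
\]
Here the right-to-left implication uses transitivity, while the left-to-right implication uses reflexivity together with the fact that $f(d') \in \pi_f(d')$. With this in hand, the pre-convergent condition for $\pi_f$ at $e$ reduces exactly to the convergent condition for $f$ at $e$, and again the convergent witness transfers directly.

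There is no substantive obstacle: both equivalences are immediate consequences of the reflexivity and transitivity of $\leq_E$, and once they are in place the two parts follow by literal substitution into the respective definitions. The only point requiring a moment's care is confirming that the quantifier structure of the pre-Tukey (resp. pre-convergent) condition matches that of the Tukey (resp. convergent) condition after the reduction, which it does, so that a single witness suffices in each case.
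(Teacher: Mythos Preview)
Your proposal is correct and follows essentially the same approach as the paper: both argue that the witness $d$ supplied by the Tukey (resp.\ convergent) condition at $e$ serves unchanged as the pre-Tukey (resp.\ pre-convergent) witness, via the observation that the set-theoretic hypothesis in the pre-version reduces to the pointwise hypothesis in the original version. Your write-up is slightly more explicit in isolating the two equivalences and in noting the non-emptiness of $\pi_f(d)$, but the underlying argument is identical.
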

\begin{proof}
  Suppose $f$ is a Tukey map.
  To show that $\pi_f$ is pre-Tukey, let $e\in E$.
  Then there is $d\in D$ such that if $f(d')\leq_E e$ then $d'\leq_D d$.
  We show that $d$ witnesses that $\pi_f$ is pre-Tukey for $e$.
  Suppose $\pi_f (d') \cap e \downarrow_{\leq_E} \neq\emptyset$.
  Then $e \in \pi_f (d')$, and so $f(d') \leq_E e$.
  Thus $d' \leq_D d$.

  Suppose $f$ is a convergent map.
  To show that $\pi_f$ is pre-convergent, let $e\in E$.
  Then there is $d\in D$ such that $e \leq_E f(d')$ for any $d' \geq_D d$.
  Then $\pi_f(d')\subseteq e\uparrow_{E}$ for any $d' \geq_D d$.
\end{proof}

\begin{lemma}\label{preTukey_imply_Tukey}
  Let $\ordD$ and $\ordE$ be directed sets and $\pi \colon D \rightarrow \mcalP(E)$.
  Suppose that $f \colon D\rightarrow E$, and $f(d)\in \pi(d)$ for each $d\in D$.
  \begin{enumerate}
   \item If $\pi$ is a pre-Tukey map, then $f$ is a Tukey map.
   \item If $\pi$ is a pre-convergent map, then $f$ is a convergent map.
  \end{enumerate}
\end{lemma}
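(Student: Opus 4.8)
The plan is to read this lemma as the exact converse of Lemma \ref{Tukey_imply_preTukey}: instead of widening a point-valued map $f$ into the set-valued map $\pi_f$, we are given a set-valued witness $\pi$ together with a point-valued selector $f$ satisfying $f(d) \in \pi(d)$, and we must push the combinatorial witnesses for $\pi$ down to $f$. I would work throughout with the quantifier characterizations of Tukey and convergent maps displayed just before Fact \ref{converse}, since these line up term by term with the definitions of pre-Tukey and pre-convergent maps; the single hypothesis $f(d) \in \pi(d)$ is exactly what lets the two match up.

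For (1), suppose $\pi$ is pre-Tukey and fix $e \in E$. Pre-Tukeyness supplies $d \in D$ such that $\pi(d') \cap e\downarrow_E \neq \emptyset$ implies $d' \leq_D d$ for every $d' \in D$. I claim the same $d$ witnesses that $f$ is Tukey for $e$. Indeed, if $f(d') \leq_E e$, then $f(d') \in e\downarrow_E$, and since $f(d') \in \pi(d')$ by hypothesis, we get $f(d') \in \pi(d') \cap e\downarrow_E$, so this intersection is non-empty; pre-Tukeyness then yields $d' \leq_D d$. As $e$ was arbitrary, $f$ is Tukey.

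For (2), suppose $\pi$ is pre-convergent and fix $e \in E$. Pre-convergence supplies $d \in D$ such that $d \leq_D d'$ implies $\pi(d') \subseteq e\uparrow_E$. Again I claim this $d$ works for $f$: if $d \leq_D d'$, then $\pi(d') \subseteq e\uparrow_E$, and since $f(d') \in \pi(d')$ we obtain $f(d') \in e\uparrow_E$, i.e.\ $e \leq_E f(d')$. Hence $f$ is convergent.

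I expect no serious obstacle here: the entire content is that membership $f(d) \in \pi(d)$ converts the set-level conditions ``$\pi(d')$ meets $e\downarrow_E$'' and ``$\pi(d') \subseteq e\uparrow_E$'' into the point-level conditions ``$f(d') \leq_E e$'' and ``$e \leq_E f(d')$''. The only point requiring care is keeping the directions of the cones straight — the Tukey case runs through the downward cone $e\downarrow_E$ while the convergent case runs through the upward cone $e\uparrow_E$ — but this is mechanical, and the same chosen witness $d$ transfers verbatim from $\pi$ to $f$. Note also that the argument never appeals to $\AC$, consistent with the paper's overall aim.
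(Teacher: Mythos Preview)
Your proof is correct and matches the paper's own proof essentially line for line: in each part you fix $e$, invoke the pre-Tukey (resp.\ pre-convergent) witness $d$, and use $f(d') \in \pi(d')$ to convert the set-level condition into the point-level one. The only difference is the added commentary; the mathematical content is identical.
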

\begin{proof}
 Suppose $\pi$ is a pre-Tukey map and let $e\in E$.
 We can take $d\in D$ such that if $\pi(d')\cap e\downarrow_{\leq_E}\neq \emptyset$ then $d'\leq_D d$.
 If $f(d') \leq_E e$, then $f(d')\in \pi (d') \cap e\downarrow_{\leq_E}$, and so $d'\leq_D d$.
 Thus $f$ is a Tukey map.

 Suppose that $\pi$ is a pre-convergent map and let $e\in E$. 
 We can take $d\in D$ such that if $d\leq_D d'$ then $\pi(d')\subseteq e\uparrow_{\leq_E}$.
 Since $f(d') \in \pi (d')$, if $d\leq_D d'$ then $e\leq_E f(d')$.
 Thus $f$ is a convergent map.
\end{proof}

Next, we prove the analog of Fact \ref{cof_embedding}.
  
\begin{theorem}
  Let $\ordD$ and $\ordE$ be directed sets.
  $\ordD \equiv_{pT} \ordE$ if and only if there is a directed set $\ordP$ into which both $\ordD$ and $\ordE$ are cofinally embeddable.
\end{theorem}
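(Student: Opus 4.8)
The plan is to prove both directions, the forward (``if'') direction being a direct $\ZF$ construction and the reverse (``only if'') direction being the substantial one, carried out entirely inside the completions $D^*$ and $E^*$ so as to avoid $\AC$.

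For the ``if'' direction, suppose cofinal embeddings $i\colon D\to P$ and $j\colon E\to P$ exist; I would exhibit an explicit pre-Tukey map witnessing $\ordD\preceq_{pT}\ordE$, the reverse following by symmetry. Define $\pi\colon D\to\mcalP(E)$ by $\pi(d)=\{e\in E\mid i(d)\leq_P j(e)\}$, which is nonempty since $j$ is cofinal. Given $e\in E$, use cofinality of $i$ to pick $d_0$ with $j(e)\leq_P i(d_0)$; then whenever $\pi(d')\cap e\downarrow_{\leq_E}\neq\emptyset$ one obtains $i(d')\leq_P j(e)\leq_P i(d_0)$, hence $d'\leq_D d_0$ because $i$ is an embedding. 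Thus $\pi$ is pre-Tukey, so $\ordD\preceq_{pT}\ordE$ by Lemma~\ref{Tukey_map}, and symmetrically $\ordE\preceq_{pT}\ordD$. No choice is needed.

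For the ``only if'' direction, the idea is to amalgamate in the completions, where canonical infima and suprema are available. By Lemma~\ref{Tukey_map}, from $\ordD\preceq_{pT}\ordE$ I get a pre-convergent map $\sigma\colon E\to\mcalP(D)$ and from $\ordE\preceq_{pT}\ordD$ a pre-convergent map $\tau\colon D\to\mcalP(E)$; by the remark after that lemma I may assume their values are upward closed, so $\sigma\colon E\to D^*$ and $\tau\colon D\to E^*$. I extend them canonically to $\hat\sigma\colon E^*\to D^*$ and $\hat\tau\colon D^*\to E^*$ via $\hat\sigma(Y)=\bigcup_{e\in Y}\sigma(e)$ and $\hat\tau(X)=\bigcup_{d\in X}\tau(d)$. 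Since a union of upward-closed sets is their greatest lower bound in the completion, these maps are monotone; and the defining property of pre-convergence translates precisely into their cofinality into $D^*$ and $E^*$. I then set $P=D^*\times E^*$ with the coordinatewise order, a directed set, and define $a\colon D^*\to P$ by $a(X)=(X,\hat\tau(X))$ and $b\colon E^*\to P$ by $b(Y)=(\hat\sigma(Y),Y)$. Monotonicity makes $a,b$ embeddings, while cofinality of $\hat\tau,\hat\sigma$ together with the existence of binary suprema in $D^*,E^*$ (intersections of upward-closed sets) makes $a,b$ cofinal. Composing with the canonical cofinal embeddings $d\mapsto d\uparrow_{\leq_D}$ and $e\mapsto e\uparrow_{\leq_E}$ noted after Definition~\ref{completion} yields cofinal embeddings of $\ordD$ and $\ordE$ into $P$.

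The main obstacle is carrying out the reverse direction without $\AC$. The classical argument produces monotone cofinal maps between $D$ and $E$ and builds $P$ from their graphs, but monotonizing a convergent map requires upper bounds of (possibly infinite) lower sets, which in a general directed set need neither exist nor be selectable without choice; moreover Fact~\ref{converse}, which manufactures convergent maps from Tukey maps, is itself an $\AC$-theorem. Both difficulties dissolve in the completion: Lemma~\ref{Tukey_map} supplies the pre-convergent maps in $\ZF$, and $D^*,E^*$ are complete, so the monotonizing unions and the cofinal bounds become canonical set-theoretic operations. The point demanding genuine care is verifying that pre-convergence of $\sigma$ and $\tau$ is exactly what forces $\hat\sigma,\hat\tau$ to be cofinal rather than merely total, which is where the definition of pre-convergent map does the real work.
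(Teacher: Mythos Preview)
Your proof is correct, and the ``if'' direction matches the paper's argument essentially verbatim. The ``only if'' direction, however, follows a genuinely different route from the paper.

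The paper builds $P$ as the disjoint union $D \cup E$ with a custom order: it takes pre-Tukey maps $\pi_0\colon D\to\mcalP(E)$ and $\pi_1\colon E\to\mcalP(D)$, derives the associated pre-convergent maps $\sigma_0,\sigma_1$, intersects them to get $\pi(d)=\pi_0(d)\cap\sigma_1(d)$ and $\sigma(e)=\pi_1(e)\cap\sigma_0(e)$, and then declares $d\leq_P e$ iff $e\in\pi(d')$ for some $d'\geq_D d$ (and symmetrically). The technical content lies in verifying that this relation is transitive, which hinges on the interplay between the pre-Tukey and pre-convergent components.

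Your construction instead passes to the completions and takes $P=D^*\times E^*$ with the product order, embedding $D^*$ via the graph of the monotonized map $\hat\tau$. This trades the transitivity verification (which is automatic for a product) for the check that $\hat\sigma,\hat\tau$ are cofinal, which is exactly where pre-convergence is used. Your $P$ is larger and less intrinsic, but the argument is arguably cleaner: it isolates completeness of $D^*,E^*$ as the mechanism that replaces choice, and it makes transparent why the classical ``graph of a monotone cofinal map'' construction goes through once one has canonical suprema. The paper's approach has the virtue of producing a small, concrete $P$ and of exhibiting $D$ and $E$ themselves (not their completions) as cofinal subsets.
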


\begin{proof}
  Assume that there is a directed set $\ordP$ into which both $\ordD$ and $\ordE$ is cofinally embeddable.
  We may assume that $D$ and $E$ are subsets of $P$.
  By symmetry, it suffices to construct a pre-Tukey map from $D$ to $E$.
  Define $\pi \colon D\rightarrow \mcalP(E)$ by
  $\pi(d)=\{e\in E\mid d\leq_P e\}$.
  We show that $\pi$ is pre-Tukey.
  Suppose $e\in E$, then there is $d\in D$ with $e\leq_P d$.
  If $\pi(d')\cap e\downarrow_{\leq_E} \neq\emptyset$
  then $d'\leq_P e$, and thus $d'\leq_D d$.

  Conversely, assume that $\ordD \equiv_{pT} \ordE$. We may assume $D \cap E = \emptyset$.
  Let $\pi_0 \colon D\rightarrow \mcalP(E)$ and 
  $\pi_1 \colon E\rightarrow \mcalP(D)$ be pre-Tukey maps.
  We may assume that each value of $\pi_0$ and $\pi_1$ is upward closed.
  Define $\sigma_0 \colon E\rightarrow \mcalP(D)$ and $\sigma_1 \colon D\rightarrow \mcalP(E)$ by
  \begin{eqnarray*}
    \sigma_0(e)=\{d\in D\mid \forall d'\in D \ 
    (\pi_0 (d')\cap e\downarrow_E \neq\emptyset \Rightarrow d'\leq_D d)\} \, , \\
    \sigma_1(d)=\{e\in E\mid \forall e'\in E \ 
    (\pi_1 (e')\cap d\downarrow_D \neq\emptyset \Rightarrow e'\leq_E e)\} \, .
  \end{eqnarray*}
  Note that $\sigma_0$ and $\sigma_1$ are pre-convergent maps and that $\sigma_0 (e)$ and $\sigma_1 (d)$ are upward closed in $D$ and $E$, respectively.

  Define $\pi \colon D\rightarrow \mcalP(E)$ by $\pi(d)=\pi_0 (d)\cap \sigma_1(d)$ and $\sigma \colon E\rightarrow \mcalP(D)$ by $\sigma(e)=\pi_1 (e)\cap \sigma_0(e)$.
  Since $\ordE$ is directed, and $\pi_0 (d) , \sigma_1 (d)$ are $\leq_E$-upward closed, we have $\pi (d) \neq \emptyset$. Similarly, $\sigma (e) \neq \emptyset$.

  By the construction of $\sigma_0$ and $\sigma_1$, and the fact that $\sigma(e)\subseteq \sigma_0 (e)$ and $\pi(d)\subseteq \sigma_1(d)$, we have the following for all $d\in D$ and $e\in E:$
  \begin{enumerate}
    \item $\pi(d)\cap e\downarrow _{E}\neq \emptyset
    \Rightarrow \sigma(e)\subseteq d\uparrow_{D}\ ;$
    \item $\sigma(e)\cap d\downarrow _{D}\neq \emptyset
    \Rightarrow \pi(d)\subseteq e\uparrow_{E}$\ .
  \end{enumerate}
  Let $P = D \cup E$, and define an ordering $\leq_P$ on $P$ as follows.
  \begin{itemize}
    \item $\leq_P \cap \ (D\times D)= \ \leq_D\ ;$
    \item $\leq_P \cap \ (E\times E)= \ \leq_E\ ;$
    \item For $d\in D$ and $e\in E$,
    $d\leq_P e \Leftrightarrow \exists d'\geq_D d\ (e\in \pi(d'))\ ;$
    \item  For $d\in D$ and $e\in E$, 
    $e\leq_P d \Leftrightarrow \exists e'\geq_E e\ (d\in \sigma(e'))$\ .
  \end{itemize}
  It suffices to prove that $\leq_P$ is transitive, and both $D$ and $E$ are cofinal in $\ordP$.

  For the transitivity of $\leq_P$, we only show that if $d_0\leq_P e\leq_P d_1$ then $d_0\leq_P d_1$ for all $d_0, d_1 \in D$ and $e\in E$.
  The proofs of the other cases are easy or similar to this case.
  We take $d'\geq_D d_0$ such that $e\in \pi(d')$ and $e'\geq_E e$ such that $d_1\in \sigma(e')$.
  Then $e\in \pi(d')\cap e'\downarrow_{\leq_E}$, so $\sigma(e')\subseteq d'\uparrow_{\leq_D}$ by (1).
  Thus $d_0 \leq_D d'\leq_D d_1$, and so $d_0 \leq_P d_1$.

  We show that $D$ and $E$ are cofinal in $\ordP$.
  To prove that $D$ is cofinal, suppose $e \in E$.
  Since $\sigma (e) \neq \emptyset$, we can take $d \in \sigma (e)$.
  Then $e \leq_P d \in D$ by the definition of $\leq_P$.
  Similarly $E$ is cofinal in $\ordP$.
\end{proof}

We note that the pre-Tukey relation and the Tukey relation need not coincide without $\AC$.

Let $\mathrm{WO}$ be the set of all $x \in {}^\omega \omega$, which codes a well-ordering of $\omega$ in the usual way.
For each $x \in \mathrm{WO}$ let $o(x)$ be the order-type of the well-ordering coded by $x$.
For $x,y \in \mathrm{WO}$ we write $x \trianglelefteq y$ if either $o(x) < o(y)$, or $o(x) = o(y)$ and $x \leq_\mathrm{lex} y$, where $\leq_\mathrm{lex}$ is the lexicographic ordering on $\omega^\omega$.
Then $\trianglelefteq$ is a linear ordering on $\mathrm{WO}$.

Assume that there is no injection from $\omega_1$ to $\omega^\omega$.
If there is a Tukey map from $(\omega_1, \leq)$ to $(\WO, \trianglelefteq)$ then we can construct an injection from $\omega_1$ to $\omega^\omega$. Thus $(\omega_1, \leq)\npreceq_T (\mathrm{WO}, \trianglelefteq).$
On the other hand, $\pi \colon \omega_1 \to \mathcal{P}( \mathrm{WO} )$ defined by
\[
\pi ( \xi ) = \{ x \in \mathrm{WO} \mid o(x) = \xi \}
\]
is a pre-Tukey and pre-convergent map. So 
$( \omega_1 , \leq ) \equiv_{pT} ( \mathrm{WO} , \trianglelefteq )$.

\section{Pre-Tukey relation among $\sigma$-directed sets}
In this section, we investigate the pre-Tukey relations between 
\begin{center}
$\left( \omega^\omega, \leq^* \right), 
\left( \mcalM, \subseteq \right), 
\left(\mcalN, \subseteq \right), 
\left(\omega_1, \leq\right)$, 
$([\omega^\omega]^\omega, \subseteq)$.
\end{center}

We consider these directed sets under an assumption on sets of reals, which holds in the Solovay model and in $L(\mathbb{R})$ satisfying $\AD$.

\begin{definition}[Woodin]
  $X \subseteq \omega^\omega$ is $\infty$-\textit{Borel} if there exist a set $S$ of ordinals and a formula $\varphi$ such that 
  \begin{center}
    $x\in X \Leftrightarrow
    L[S][x]\models \varphi(S,x)$
  \end{center}
  for all $x\in \omega^\omega$.
  We call this pair $(S, \varphi)$ a \textit{Solovay code for} $X$.
\end{definition}

\begin{definition}
  Let $(\star)$ denote the following statements:
  \begin{enumerate}
    \item $\omega_1$ is inaccessible in $L[S]$ for any set $S$ of ordinals.
    \item Every subset of $\omega^\omega$ is $\infty$-Borel. 
  \end{enumerate}
\end{definition}

For an inaccessible cardinal $\kappa$ and a $\collkappa$-generic filter $G$ over $V$, 
we let
$\VRVG = \mathrm{HOD}^{V[G]}_{V\cup \mbbR^{V[G]}}$, the collection of hereditarily ordinal defiable sets in $V[G]$ with parameters from $V\cup \mbbR^{V[G]}$.
We call $\VRVG$ the Solovay model for $\kappa$.
\begin{fact}\label{fact_Solovay_LR}
  \begin{enumerate}
    \item Assume $\AC$ in $V$. Let $\kappa$ be an inaccessible cardinal and let $G$ be a $\collkappa$-generic filter over $V$.
    Then $(\star)$ holds in $\VRVG$.
    \item If $\LR\models \AD$ then $\LR\models (\star)$.
  \end{enumerate}
\end{fact}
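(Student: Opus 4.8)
The plan is to prove the two items separately, and within item~(1) to handle the Solovay model via the homogeneity of $\collkappa$ recorded in the Preliminaries and to handle $\LR$ via the classical consequences of $\AD$. Throughout the Solovay case I write $N=\VRVG$, and I note first that, since $N$ and $V[G]$ have the same reals while $\collkappa$ collapses $\kappa$ to $\omega_1$, we have $\omega_1^{N}=\kappa$. The central step is the lemma that \emph{every set $S$ of ordinals in $N$ already lies in $V[G\restrict\alpha]$ for some $\alpha<\kappa$}. To see this, I would write $S$ as ordinal definable in $V[G]$ from a parameter $v\in V$ and a real $r\in\mbbR^{V[G]}$ (finitely many parameters of each kind collapse to one); since $\collkappa$ is $\kappa$-c.c.\ and $\kappa$ is inaccessible, a nice name for $r$ uses fewer than $\kappa$ conditions and is therefore bounded, so $r\in V[G\restrict\alpha]$ for some $\alpha<\kappa$. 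As $V[G]$ is a $\Coll(\omega,[\alpha,\kappa))$-generic extension of $V[G\restrict\alpha]$, and this tail forcing is isomorphic to $\collkappa$ and hence homogeneous, for each ordinal $\xi$ the truth value of the defining formula is decided by the trivial condition; thus membership in $S$ is definable over $V[G\restrict\alpha]$ from $v$ and $r$, whence $S\in V[G\restrict\alpha]$.

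Granting the lemma, item~(1) for the Solovay model follows quickly: since $\Coll(\omega,{<}\alpha)$ has size below the inaccessible $\kappa$, small forcing preserves the inaccessibility of $\kappa$, so $\kappa$ is inaccessible in $V[G\restrict\alpha]$; as $L[S]\subseteq V[G\restrict\alpha]$ and both regularity and the (strong) limit-cardinal property pass downward to inner models, $\kappa=\omega_1^{N}$ is inaccessible in $L[S]$. For item~(2) in the Solovay model I would use Solovay's local definability lemma: for $A\subseteq\omega^\omega$ in $N$ defined from $v\in V$ and $r\in V[G\restrict\alpha]$ by a formula $\varphi$, homogeneity of $\collkappa$ gives, for every real $x$, that $x\in A$ if and only if $V[G\restrict\alpha][x]\models\bigl(1\Vdash_{\collkappa}\varphi(\check v,\check r,\check x)\bigr)$, using that $V[G]$ is a $\collkappa$-generic extension of $V[G\restrict\alpha][x]$. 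I would then fix a sufficiently large $\theta$ so that this forcing statement is decided inside $H_\theta^{V[G\restrict\alpha]}$, and let $S$ be a set of ordinals coding the structure $\bigl(H_\theta^{V[G\restrict\alpha]},\in\bigr)$ together with $\kappa,\alpha,v,r$ (available since $V\models\AC$). From $S$ and a real $x$, the model $L[S][x]$ reconstructs the relevant structure and evaluates the forcing relation, which yields a formula $\psi$ with $x\in A$ if and only if $L[S][x]\models\psi(S,x)$; that is, $(S,\psi)$ is a Solovay code for $A$.

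For the $\LR$ case I would appeal to the classical theory of determinacy. Item~(2) is Woodin's theorem that $\LR\models\AD$ implies $\LR\models\AD^{+}$, of which ``every set of reals is $\infty$-Borel'' is one of the defining conjuncts. For item~(1), I would use Solovay's theorem that $\AD$ implies $\omega_1$ is measurable (the club filter on $\omega_1$ is a normal ultrafilter); in particular $\omega_1^{\LR}$ is inaccessible in $\LR$, and since $L[S]\subseteq\LR$ for every set $S$ of ordinals in $\LR$ and inaccessibility descends to inner models, $\omega_1$ is inaccessible in $L[S]$.

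The step I expect to be the main obstacle is the reflection in item~(2) for the Solovay model: isolating a set-sized $H_\theta^{V[G\restrict\alpha]}$ that decides $1\Vdash_{\collkappa}\varphi$ for the possibly unbounded formula $\varphi$, and checking that $L[S][x]$ faithfully recomputes this relation from the code $S$. The remaining ingredients---homogeneity, small-forcing preservation of inaccessibility, and the downward absoluteness of inaccessibility to inner models---are routine, and the $\LR$ half reduces to direct citations of Solovay and Woodin.
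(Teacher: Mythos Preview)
Your proposal is correct and follows the same overall scheme as the paper---homogeneity of the L\'evy collapse for the Solovay model, classical consequences of $\AD$ for $\LR$---but the technical execution of part~(1)(b) differs in a way worth noting. You pass to the forcing statement $1\Vdash_{\collkappa}\varphi(\check v,\check r,\check x)$ evaluated in $V[G\restrict\alpha][x]$ and then seek a single $\theta$ so that $H_\theta^{V[G\restrict\alpha]}$ decides it; as you correctly flag, the obstacle is that the ambient model varies with $x$, so a uniform reflection bound is not immediate. The paper avoids this by reflecting \emph{first} inside $V[G]$ to a bounded statement $V_\gamma^{V[G]}\models\psi(r,v,x)$, then using $\AC$ in $V$ to pick $S'$ with $V_\gamma^{V[G]}=V_\gamma^{L[S'][G]}$; since ``$V_\gamma\models\psi$'' is now a statement about a fixed set, its $\collkappa$-forcing relation is uniformly computable in $L[S'][r][x]$ and no reflection over a moving model is needed. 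For part~(2)(a) you appeal to measurability of $\omega_1$ and then say ``inaccessibility descends to inner models''; this works, but in the choiceless context of $\LR$ one must unpack what inaccessibility means and ultimately one still passes through the fact that $(2^\omega)^{L[S]}$ is countable. The paper does this directly via the perfect set property, which is slightly more self-contained.
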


\begin{proof}
  $(1)$ : We work in $V[G]$.
  Suppose that $\lambda$ is an ordinal and $S \subseteq \lambda$.
  By the homogeneity of $\collkappa$, there are $v\in V$, $r \in \omega^\omega$ and a formula $\varphi$ such that 
  \begin{center}
    $\alpha \in S \Leftrightarrow V[r] \models 1_{\collkappa} \Vdash_{\collkappa} \varphi (v, r, \alpha)$
  \end{center}
  for all $ \alpha < \lambda$.
  Then $S \in V[r]$, so $L[S] \subseteq V[r]$.
  Since $\kappa=\omega_1$ is inaccessible in $V[r]$,
  $\omega_1$ is inaccessible in $L[S]$.

  We suppose that $X \subseteq \omega^\omega$ and show that $X$ is $\infty$-Borel.
  By the reflection principle, there are $v \in V$, $r \in \omega^\omega$, a formula $\psi$ and an ordinal $\gamma$
  such that
\[
x \in X \ \Leftrightarrow \ 
V_{\gamma}^{V[G]} \models \psi (r,v,x) \, .
\]
for all $x\in\omega^\omega$.
Since $\AC$ holds in $V$, we can take a set $S'$ of ordinals such that ${V_\gamma}^{V[G]} = {V_\gamma}^{L[S'][G]}$. Then

\begin{align*}
  x \in X
  &\Leftrightarrow L[S'][G]\models `` V_\gamma \models \psi (r, v, x) ''\\
  & \Leftrightarrow L[S'][r][x]\models 1_{\collkappa} \Vdash_{\collkappa} `` V_\gamma \models \psi (r, v, x) ''\ .
\end{align*}

Let $S$ be a set of ordinals coding $S', r, v, \kappa$ and $\gamma$ such that $L[S] = L[S'][r]$.
Let $\varphi(S, x)$ be the formula stating that
\begin{center} 
$1_{\collkappa} \Vdash_{\collkappa} `` V_\gamma \models \psi (r, v, x) ''$.
\end{center}
Then $(S, \varphi)$ is a Solovay code for $X$.

  $(2)$ : See \cite{woodin2010axiom} for a proof of that every subset of $\omega^\omega$ is $\infty$-Borel. 
  For every set $S$ of ordinals with $S \in \LR$, 
  $(\omega^\omega)^{L[S]}$ is a  well-orderable subset of $\omega^\omega$ in $\LR$.
  Thus by the perfect set property in $\LR$,
  $(2^\omega)^{L[S]}$ is countable in $\LR$.
  Then it follows that $\omega_1$ is strong limit in $L[S]$.

  Now we assume $\ZF+\DC$, thus $\LR$ satisfies $\ZF+\DC$.
  In particular, $\omega_1$ is a regular cardinal in $\LR$.
  For each set $S$ of ordinals with $S\in \LR$, since $L[S] \subseteq \LR$, $\omega_1$ is a regular cardinal in $L[S]$.
  Hence, $\omega_1$ is inaccessible in $L[S]$.
\end{proof}

 \subsection{($\omega^\omega, \leq^*) $ v.s. ($\mcalM, \subseteq$)}

 Here, we prove that 
 $\dominating \preceq_{pT} \meagerideal$ and 
 $\meagerideal \not\preceq_{pT} \dominating$ under the assumption $(\star)$.

As is well-known, in $\ZFC$, we can construct a Tukey map from $(\omega^\omega, \leq^*)$ to $(\mcalM, \subseteq)$.
That construction works in $\ZF+\DC$.
The detail of the construction of functions in Fact \ref{F_sigma} 
can be found in the proof of Theorem 2.3.1 in \cite{bartoszynski1995set}.

\begin{fact}\label{F_sigma}
  Let $\NWD$ be the set of all nowhere dense subsets of $2^\omega$.
  Then there are functions $\varphi \colon \omega^\omega\rightarrow \mcalM$ and $\tilde{\varphi} \colon \NWD^\omega \rightarrow \omega^\omega$ such that
  \begin{center}
    $\varphi(f)\subseteq \bigcup_{n\in\omega} F_n \Rightarrow f\leq^* \tilde{\varphi}(\langle F_n \mid n < \omega \rangle)$
  \end{center} 
  for any $f \in \omega^\omega$ and any $\langle F_n \mid n < \omega \rangle \in \NWD^\omega$.
\end{fact}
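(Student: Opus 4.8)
The plan is to realize the standard combinatorial reduction witnessing $\mathfrak d \le \cof(\mcalM)$, presented through the ``matching real'' description of meager sets, and to check that every choice made along the way is canonical, so that the whole construction survives in $\ZF + \DC$. First I would record the one property of nowhere dense sets that drives everything: if $F \in \NWD$ and $s \in 2^{<\omega}$, then since $\overline F$ has empty interior there is $t \supseteq s$ with $[t] \cap F = \emptyset$, and taking the least such $t$ in a fixed enumeration of $2^{<\omega}$ makes this escape step choice-free.

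Next comes the definition of $\tilde\varphi$. Given $\langle F_n \mid n < \omega \rangle \in \NWD^\omega$, set $G_k = \overline{F_0} \cup \dots \cup \overline{F_k}$, which is closed, nowhere dense and increasing. I would build by recursion an interval partition $\bar I = \langle I_k \rangle$ of $\omega$ and a real $y \in 2^\omega$ so that matching $y$ on $I_k$ escapes $G_k$ from \emph{every} prefix: having fixed $a_k = \min I_k$, enumerate $2^{a_k} = \{s_1, \dots, s_r\}$ and extend a single common suffix $u$ in $r$ stages, at stage $i$ escaping $G_k$ from $s_i u$ (later extensions preserve the earlier escapes); then put $I_k = [a_k, a_k + |u|)$ and $y \restrict I_k = u$. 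By construction any $x$ with $x \restrict I_k = y \restrict I_k$ lies in some $[s_i u]$, hence avoids $F_j$ for all $j \le k$, and so every $x$ matching $y$ on infinitely many $I_k$ avoids $\bigcup_n F_n$; that is, $\bigcup_n F_n \subseteq D(y, \bar I)$, where $D(y, \bar I) = \{ x \mid \forall^\infty k \ x \restrict I_k \ne y \restrict I_k \}$. I then set $\tilde\varphi(\langle F_n \rangle)(k) = |I_k|$.

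For $\varphi$, given $f \in \omega^\omega$ I pass to $f^\uparrow(n) = \max_{i \le n} f(i) + 1$ (increasing and pointwise $\ge f$), let $\bar J = \langle J_m \rangle$ be the consecutive partition with $|J_m| = f^\uparrow(m)$, and define $\varphi(f) = D(\mathbf 0, \bar J)$. This is an $F_\sigma$ meager set, so $\varphi \colon \omega^\omega \to \mcalM$ is well defined. Now suppose $\varphi(f) \subseteq \bigcup_n F_n$; then $D(\mathbf 0, \bar J) \subseteq D(y, \bar I)$, and the heart of the matter is the purely combinatorial claim that for all but finitely many $k$ the block $I_k$ contains some block $J_m$ entirely. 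Indeed, if infinitely many $I_k$ contained no full $J_m$, I could construct $x$ matching $y$ on each such $I_k$ while differing from $\mathbf 0$ on every $J_m$ (each $J_m$ meeting such an $I_k$ also protrudes from it, leaving a free bit to flip), yielding $x \in D(\mathbf 0, \bar J) \setminus D(y, \bar I)$, a contradiction. As the blocks are consecutive and disjoint, the indices $m(k)$ of the engulfed blocks are strictly increasing, so $m(k) \ge k$; hence $\tilde\varphi(\langle F_n \rangle)(k) = |I_k| \ge |J_{m(k)}| = f^\uparrow(m(k)) \ge f(k)$ for all large $k$, i.e. $f \le^* \tilde\varphi(\langle F_n \rangle)$.

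The step I expect to be the main obstacle is making this extraction genuinely uniform and choice-free while keeping the two ``$\forall^\infty$'' quantifiers aligned: the recursion defining $(\bar I, y)$ must read a single matching real off the presentation with no appeal to $\AC$ (handled by always taking least escaping witnesses and canonical enumerations), and the closing index count must convert the inclusion of eventually-different sets into an honest $\le^*$-domination. Once the engulfing claim and the bound $m(k) \ge k$ are secured, the remainder is routine bookkeeping.
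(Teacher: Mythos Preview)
The paper does not prove this Fact; it cites Theorem 2.3.1 of Bartoszy\'nski--Judah for the construction, so there is no in-paper argument to compare against. Your outline follows the standard route, and the construction of $(\bar I, y)$ together with the engulfing lemma (cofinitely many $I_k$ contain some $J_{m(k)}$) is correct, with the minor proviso that in its proof the infinite set of ``bad'' $I_k$ should be thinned to contain no two consecutive indices, so that every $J_m$ meeting a chosen $I_k$ really has a bit outside all chosen ones.

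The closing index count, however, has a genuine gap. From ``the $m(k)$ are strictly increasing'' you infer $m(k) \ge k$, but $m$ is only defined for $k \ge k_0$; strict increase then yields merely $m(k) \ge k - k_0 + m(k_0)$, which can lag behind $k$ by an arbitrary constant. Concretely, take $f(k) = k+1$ (so $|J_m| = m+2$), let $I_k = \{k\}$ for $k < 100$, and thereafter let each $I_k$ coincide with a single $J$-block; one checks $D(\mathbf 0, \bar J) \subseteq D(\mathbf 0, \bar I)$, every $I_k$ with $k \ge 100$ engulfs a $J$, yet $m(k) = k - 87$ and $|I_k| = k - 85 < k + 1 = f(k)$ for all large $k$, so your $\tilde\varphi(\langle F_n\rangle)(k)=|I_k|$ fails to dominate $f$. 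The standard repair replaces lengths by endpoints: pass to an increasing $f' \ge f$ with $f'(n) > n$, set $\hat f(0) = 0$, $\hat f(m+1) = f'(\hat f(m))$, $J_m = [\hat f(m), \hat f(m+1))$, and put $\tilde\varphi(\langle F_n\rangle)(n)$ equal to the right endpoint $a_{k+2}$ of the \emph{second} $\bar I$-block beyond $n \in I_k$. The engulfing lemma applied to $I_{k+1}$ gives $a_{k+2}\ge \hat f(m(k{+}1)+1)=f'(\hat f(m(k{+}1)))\ge f'(a_{k+1})$ for large $k$, whence $\tilde\varphi(n) = a_{k+2} \ge f'(a_{k+1}) \ge f'(n) \ge f(n)$ for all sufficiently large $n$.
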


\begin{theorem}\label{ZFC_dom_meager}
 $(\omega^\omega, \leq^*)\preceq_{T} (\mcalM, \subseteq)$. 
\end{theorem}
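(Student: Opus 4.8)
The plan is to show that the very map $\varphi \colon \omega^\omega \rightarrow \mcalM$ produced in Fact \ref{F_sigma} is itself a Tukey map; by the definition of $\preceq_T$ this yields $(\omega^\omega, \leq^*) \preceq_T (\mcalM, \subseteq)$ directly. Using the characterization of Tukey maps recorded just after the definition of the Tukey relation, it suffices to check that for every $A \in \mcalM$ there is $g \in \omega^\omega$ such that, for all $f \in \omega^\omega$, the inclusion $\varphi(f) \subseteq A$ implies $f \leq^* g$.

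First I would fix an arbitrary $A \in \mcalM$. Since $A$ is meager, by definition there is a sequence $\langle F_n \mid n < \omega \rangle$ of nowhere dense subsets of $2^\omega$ with $A \subseteq \bigcup_{n < \omega} F_n$; observe that no choice is needed here, as we only require a single witnessing sequence for the fixed set $A$. I then set $g = \tilde{\varphi}(\langle F_n \mid n < \omega \rangle)$, where $\tilde{\varphi} \colon \NWD^\omega \rightarrow \omega^\omega$ is the second function supplied by Fact \ref{F_sigma}.

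The verification is then immediate from the defining property of the pair $(\varphi, \tilde{\varphi})$: if $f \in \omega^\omega$ satisfies $\varphi(f) \subseteq A$, then $\varphi(f) \subseteq \bigcup_{n < \omega} F_n$, and hence $f \leq^* \tilde{\varphi}(\langle F_n \mid n < \omega \rangle) = g$ by Fact \ref{F_sigma}. Thus $g$ witnesses the Tukey condition for $A$, and since $A$ was arbitrary, $\varphi$ is a Tukey map, which is what we wanted.

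I expect no genuine obstacle here: all the combinatorial content — the explicit construction of $\varphi$ and $\tilde{\varphi}$ together with the proof of their key implication — is packaged into Fact \ref{F_sigma}, whose proof is cited from Bartoszyński--Judah. The only point deserving care in the $\ZF + \DC$ setting is the selection of the witnessing sequence $\langle F_n \mid n < \omega \rangle$; but since the Tukey condition is an existential statement quantified separately over each $A \in \mcalM$, a single (non-uniform) witness per set suffices, so that no form of $\AC$ is invoked and the argument goes through in the base theory.
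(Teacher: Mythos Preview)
Your proposal is correct and follows essentially the same approach as the paper: both show that the map $\varphi$ from Fact~\ref{F_sigma} is itself a Tukey map by fixing $A \in \mcalM$, choosing a single witnessing sequence $\langle F_n \mid n < \omega \rangle$ of nowhere dense sets covering $A$, and letting $\tilde{\varphi}(\langle F_n \mid n < \omega \rangle)$ serve as the bound. The only cosmetic difference is that the paper takes $\bigcup_n F_n = A$ while you take $A \subseteq \bigcup_n F_n$, which is immaterial; your added remark on why no choice is needed is a welcome clarification.
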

 
\begin{proof}

  Fix functions $\varphi \colon \omega^\omega\rightarrow \mcalM$ and $\tilde{\varphi} \colon \NWD^\omega \rightarrow \omega^\omega$ as in Fact \ref{F_sigma}.
  We show that $\varphi$ is a Tukey map from $\dominating$ to $\meagerideal$.
  Suppose that $X \in \mcalM$.
  We can take $\langle F_n \mid n<\omega\rangle \in \NWD^\omega$ such that 
  $\bigcup_{n<\omega} F_n = X$.
  Then $\tilde{\varphi}(\langle F_n \mid n<\omega\rangle) \in \omega^\omega$ witnesses that $\varphi$ is a Tukey map.
\end{proof}

 In the rest of this subsection, we prove that $( \mcalM , \subseteq ) \not\preceq_{pT} ( \omega^\omega , \leq^* )$ under the assumption $(\star)$.
 We need some preliminaries.
 Recall that for an inner model $W$, 
 $C^W$ denotes the set of all Cohen reals over $W$.
 
 \begin{lemma}\label{Cohen_cofinal}
  Assume ($\star$).
   Let $x \in \omega^\omega$ and $B \subseteq 2^\omega$ be a comeager set.
   Suppose that $(S, \varphi)$ is a Solovay code for $B$.
   Then $C^{L[S]} \subseteq B$.
 \end{lemma}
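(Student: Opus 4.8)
The plan is to reduce the statement to a forcing fact inside $L[S]$, namely that the Cohen forcing $\mbbC = 2^{<\omega}$ forces $\varphi(S,\dot c)$, where $\dot c$ is the canonical name for the generic Cohen real. Once this is in hand, the forcing theorem applied in $L[S]$ yields, for every Cohen real $c$ over $L[S]$, that $L[S][c] \models \varphi(S,c)$; since $(S,\varphi)$ is a Solovay code for $B$, this says exactly $c \in B$, which is the desired conclusion $C^{L[S]} \subseteq B$.

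The first step is to record that, under clause (1) of $(\star)$, the set $(2^\omega)^{L[S]}$ is countable. Since $\omega_1$ is inaccessible in $L[S]$, it is a limit cardinal of $L[S]$, so $\omega_1^{L[S]} < \omega_1$ and hence $\omega_1^{L[S]}$ is a countable ordinal; as $L[S]$ satisfies $\ZFC$ and thus $\CH$, the set $(2^\omega)^{L[S]}$ is well-orderable of order type $\omega_1^{L[S]}$, and is therefore countable in our model. Consequently there are only countably many Borel codes in $L[S]$, so the union of all Borel meager sets coded in $L[S]$ is meager. By the characterization of Cohen reals recalled in Section 2, $C^{L[S]}$ is precisely the complement of this union, so $C^{L[S]}$ is comeager; in particular $C^{L[S]} \cap [p]$ is non-meager for every $p \in 2^{<\omega}$, where $[p]$ denotes the basic clopen set determined by $p$.

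The main step is to prove the claim that, in $L[S]$, $1_{\mbbC} \Vdash_{\mbbC} \varphi(S,\dot c)$. I would argue by contradiction: if this fails, then some condition $p \in 2^{<\omega}$ satisfies $p \Vdash_{\mbbC} \neg\varphi(S,\dot c)$ in $L[S]$. For any $c \in C^{L[S]}$ with $p \subseteq c$, the real $c$ is $\mbbC$-generic over $L[S]$ and passes through $p$, so by the forcing theorem $L[S][c] \models \neg\varphi(S,c)$, whence $c \notin B$. Thus $C^{L[S]} \cap [p] \subseteq 2^\omega \setminus B$. But $B$ is comeager, so $2^\omega \setminus B$ is meager, contradicting the fact that $C^{L[S]} \cap [p]$ is non-meager. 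This contradiction establishes the claim, and the conclusion follows as in the first paragraph.

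I expect the main obstacle to be the transfer between the category hypothesis (that $B$ is comeager) and the forcing relation over $L[S]$: this rests on the equivalence between Cohen genericity over $L[S]$ and avoidance of the meager Borel sets coded in $L[S]$, together with the absoluteness supplied by the Solovay code, which lets one read membership in $B$ off a first-order statement about $L[S][c]$. The only place $(\star)$ enters is clause (1), used to guarantee that $(2^\omega)^{L[S]}$ is countable and hence that $C^{L[S]}$ is comeager; clause (2) is needed only to ensure the existence of the code $(S,\varphi)$ that the lemma already takes as given.
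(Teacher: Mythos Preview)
Your proof is correct and follows essentially the same route as the paper: both arguments obtain a condition $p\in 2^{<\omega}$ forcing $\neg\varphi(S,\dot c)$ over $L[S]$, observe that the set of Cohen reals over $L[S]$ extending $p$ is then disjoint from $B$ (hence meager), and derive a contradiction from the countability of $(2^\omega)^{L[S]}$, which makes that set comeager in $[p]$. The only cosmetic difference is that you first prove $1_{\mbbC}\Vdash\varphi(S,\dot c)$ and then conclude, whereas the paper starts from a putative counterexample $c\in C^{L[S]}\setminus B$ and extracts $p$ from its generic filter.
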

 
 \begin{proof}
   For a contradiction, we assume that $C^{L[S]} \not\subseteq B$ and take $c \in C^{L[S]} \setminus B$.
   Since $c$ is a Cohen real over $L[S]$, we can take a $\mbbC$-generic filter $g$ over $L[S]$ such that $c = \dot{c}[g]$, where $\dot{c}$ is the canonical $\mbbC$-name for a generic real.
   We note that
   \begin{align*}
     c \notin B 
     &\Leftrightarrow L[S][c]\models \lnot \varphi(S,c)\\
     &\Leftrightarrow \exists p\in g\ ( L[S]\models  p\Vdash_{\mbbC} \lnot \varphi(S,\dot{c})).
   \end{align*}
   
   Take such $p \in g$, and let $X$ be the collection of $\dot{c}[h]$ for all $\mbbC$-generic filter $h$ over $L[S]$ with $p \in h$.
   Then $X \cap B = \emptyset$, and thus $X$ is meager.
   However, $X$ is comeager in a non-meager set $\{y \in 2^\omega \mid p\subseteq y\}$
   since $(2^\omega)^{L[S]}$ is countable.
   Thus $X$ is non-meager.
   This is a contradiction.
 \end{proof}

 Let $\hat{\mcalM}$ be the comeager filter over $2^\omega$.

 \begin{lemma}\label{Cohen_cofinal_M}
  Assume ($\star$).
  Then $\{C^{L[x]}\mid x\in\omega^\omega\}$ is cofinal in $(\hat{\mcalM}, \supseteq)$.
 \end{lemma}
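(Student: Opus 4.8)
The plan is to unwind what cofinality in $(\hat{\mcalM}, \supseteq)$ asks for: given an arbitrary comeager $B \subseteq 2^\omega$, I must produce a real $x \in \omega^\omega$ with $C^{L[x]} \in \hat{\mcalM}$ and $B \supseteq C^{L[x]}$, that is, $C^{L[x]} \subseteq B$. So fix such a $B$. Since every subset of $2^\omega$ is $\infty$-Borel under $(\star)$, the set $B$ has a Solovay code $(S, \varphi)$, and Lemma \ref{Cohen_cofinal} immediately gives $C^{L[S]} \subseteq B$. The only gap is that $S$ is a set of ordinals rather than a real, so it remains to replace $L[S]$ by $L[x]$ for a suitable real $x$ without losing the inclusion.

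For this reduction I would use clause (1) of $(\star)$. Since $\omega_1$ is inaccessible, hence strong limit, in $L[S]$, the set $(2^\omega)^{L[S]}$ has $L[S]$-cardinality below $\omega_1$ and is therefore countable in $V$. Enumerate it as $\langle r_n \mid n < \omega \rangle$ and let $x \in \omega^\omega$ code this sequence, so that $(2^\omega)^{L[S]} \subseteq L[x]$. Now every meager Borel set coded in $L[S]$ has its code among the reals of $L[S]$, hence in $L[x]$; consequently any real that avoids all meager Borel sets coded in $L[x]$ also avoids those coded in $L[S]$. In other words $C^{L[x]} \subseteq C^{L[S]} \subseteq B$, which is the required inclusion.

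Finally I would verify that $C^{L[x]} \in \hat{\mcalM}$, i.e.\ that $C^{L[x]}$ is genuinely comeager. Again by clause (1) of $(\star)$, $\omega_1$ is inaccessible in $L[x]$, so $(2^\omega)^{L[x]}$ is countable; hence only countably many Borel comeager sets are coded in $L[x]$, and $C^{L[x]}$, being their intersection, is comeager. This completes the argument. The main obstacle is precisely the passage from the set of ordinals $S$ to a real $x$, and the key observation is that $(\star)(1)$ forces $(2^\omega)^{L[S]}$ to be countable: this both makes the coding real $x$ available and, applied to $x$ itself, guarantees that $C^{L[x]}$ lands inside $\hat{\mcalM}$.
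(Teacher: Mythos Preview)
Your argument is correct, but it takes a longer route than the paper. The paper observes that any comeager $X \in \hat{\mcalM}$ contains a \emph{Borel} comeager subset $Y$ (just take the complement of an $F_\sigma$ meager cover of $2^\omega \setminus X$), and a Borel code $a \in \omega^\omega$ for $Y$ is already a real: by absoluteness of Borel decoding, $(a,\varphi)$ is itself a Solovay code for $Y$, so Lemma~\ref{Cohen_cofinal} gives $C^{L[a]} \subseteq Y \subseteq X$ with no further work. You instead invoke the full $\infty$-Borel code $(S,\varphi)$ for $B$, land in $L[S]$ for a set of ordinals $S$, and then use clause~(1) of $(\star)$ to collapse $(2^\omega)^{L[S]}$ into a single real $x$. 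That reduction is valid, but it is unnecessary once one notices that a Borel shrinking already yields a real-parametrized Solovay code. On the plus side, your write-up explicitly checks that $C^{L[x]} \in \hat{\mcalM}$, which the paper leaves implicit.
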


 \begin{proof}
  Suppose $X\in \hat{\mcalM}$. We can take a Borel comeager set $Y\subseteq X$ and a Borel code $a\in\omega^\omega$ of $Y$.
  By the absoluteness of Borel codes between models of $\ZF+\DC$, there is a formula $\varphi$ such that $( a, \varphi )$ is a Solovay code for $Y$.
  By Lemma \ref{Cohen_cofinal}, $C^{L[a]} \subseteq Y$.
 \end{proof}
 
 \begin{lemma}\label{d_cofM_equiv}
  Assume ($\star$).
  Suppose that for any set $S$ of ordinals,
  there exists $x\in\omega^\omega$ such that for every $y\in\omega^\omega$, there exists $z\in\omega^\omega$ with $y \leq^* z$ and $C^{L[S][z]}\nsubseteq C^{L[S][x]}$.
  Then $\meagerideal\npreceq_{pT}\dominating$.
 \end{lemma}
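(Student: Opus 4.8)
The plan is to argue by contradiction on the pre-convergent side, the point being that this is the direction in which the asymmetric $\not\subseteq$ in the hypothesis is oriented correctly. First I would assume $\meagerideal \preceq_{pT} \dominating$ and invoke Lemma \ref{Tukey_map} to extract a pre-convergent map $\sigma \colon \omega^\omega \to \mcalP(\mcalM)$ from $\dominating$ to $\meagerideal$; thus $\sigma(f) \neq \emptyset$ for all $f$, and for every $X \in \mcalM$ there is $f$ with $\sigma(f') \subseteq X\uparrow_{\subseteq}$ whenever $f \leq^* f'$. Throughout, for an inner model $W$ I write $M^W = 2^\omega \setminus C^W$; note that $C^{L[S][z]} \not\subseteq C^{L[S][x]}$ is equivalent to $M^{L[S][x]} \not\subseteq M^{L[S][z]}$, and that under $(\star)$ each $M^{L[S][x]}$ is a genuine meager set, since $(2^\omega)^{L[S][x]}$ is countable and so $C^{L[S][x]}$ is comeager. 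The goal is then to produce a single $X \in \mcalM$ for which the convergence clause fails for cofinally many $f'$.

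The key device I would use is to pass from $\sigma$ to the canonical meager set $I(z) = \bigcap \sigma(z)$. This set is meager, being contained in any (nonempty) member of $\sigma(z)$; moreover $X \not\subseteq I(z)$ already yields some $Y \in \sigma(z)$ with $X \not\subseteq Y$, so it suffices to control $I(z)$ rather than the individual, possibly non-Borel, members of $\sigma(z)$. Crucially, $G = \{(z,r) : r \in I(z)\}$ is a subset of $\omega^\omega \times 2^\omega$, hence by $(\star)(2)$ is $\infty$-Borel; fixing a Solovay code $(S, \psi)$ for $G$ under the identification $\omega^\omega \times 2^\omega \cong \omega^\omega$, one gets for each fixed $z$ that $I(z) = \{r : L[S][z][r] \models \psi\}$, so $(S_z, \neg\psi)$ with $L[S_z] = L[S][z]$ is a Solovay code for the comeager set $2^\omega \setminus I(z)$. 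By Lemma \ref{Cohen_cofinal} this gives $C^{L[S][z]} \subseteq 2^\omega \setminus I(z)$, i.e. $I(z) \subseteq M^{L[S][z]}$.

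Finally I would apply the hypothesis to this $S$ to obtain $x$ such that for every $y$ there is $z \geq^* y$ with $M^{L[S][x]} \not\subseteq M^{L[S][z]}$, and set $X = M^{L[S][x]} \in \mcalM$. By pre-convergence fix $f$ with $\sigma(f') \subseteq X\uparrow_{\subseteq}$ for all $f' \geq^* f$. Instantiating the hypothesis clause at $y = f$ produces $z \geq^* f$ with $M^{L[S][x]} \not\subseteq M^{L[S][z]}$; since $I(z) \subseteq M^{L[S][z]}$ this forces $X \not\subseteq I(z) = \bigcap \sigma(z)$, so some $Y \in \sigma(z)$ has $X \not\subseteq Y$, contradicting $\sigma(z) \subseteq X\uparrow_{\subseteq}$ (as $z \geq^* f$). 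This contradiction yields $\meagerideal \not\preceq_{pT} \dominating$.

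I expect the main obstacle to be exactly the higher-type character of a pre-convergent map: its values $\sigma(z)$ are families of arbitrary meager sets, which cannot be $\infty$-Borel coded directly, and whose individual members need not lie inside $M^{L[S][z]}$. Replacing $\sigma(z)$ by the single set $I(z) = \bigcap \sigma(z)$ is what drops the object back down to a set of reals where $(\star)(2)$ and Lemma \ref{Cohen_cofinal} apply, while the elementary remark that $X \not\subseteq I(z)$ supplies a witness $Y \in \sigma(z)$ keeps the reduction faithful to the pre-convergent clause. The remaining care is purely bookkeeping: folding the real $z$ into the ordinal parameter so that $L[S_z] = L[S][z]$, and checking that $M^{L[S][x]}$ is genuinely meager under $(\star)$.
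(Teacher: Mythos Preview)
Your proposal is correct and follows essentially the same argument as the paper: the paper passes to the dual comeager filter $(\hat{\mcalM},\supseteq)$ and replaces $\sigma(z)$ by the single comeager set $f(z)=\bigcup\sigma(z)$, whereas you stay on the meager side and take $I(z)=\bigcap\sigma(z)$, but these are complements of one another and the rest of the argument---coding the resulting binary relation via $(\star)$, applying Lemma~\ref{Cohen_cofinal} to get $C^{L[S][z]}\subseteq f(z)$ (equivalently $I(z)\subseteq M^{L[S][z]}$), and then confronting this with the hypothesis---is identical. The paper phrases it as a contrapositive rather than a contradiction, but the logical content is the same.
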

 
 \begin{proof}
  To prove the contrapositive, assume that $(\mcalM, \subseteq)\preceq_{pT}(\omega^\omega, \leq^*)$.
   Since $\meagerideal$ is isomorphic to 
   $\comeagerfilter$,
   there is a pre-convergent map $\sigma$ from $( \omega^\omega , \leq^* )$ to $( \hat{\mcalM} , \supseteq )$.
   We define $f \colon \omega^\omega \rightarrow \hat{\mcalM}$ by $f(z)=\bigcup \sigma(z)$.
   It is easy to check that $f$ is a convergent map from $( \omega^\omega , \leq^* )$ to $( \hat{\mcalM} , \supseteq )$.
   By $(\star)$ we can take a set $S$ of ordinals and formula $\varphi$ such that
   \begin{center}
   $u\in f(v) \Leftrightarrow L[S][u][v]\models \varphi(S,u,v)$
   \end{center}
   for all $u,v\in\omega^\omega$.
 
   We suppose $x \in \omega^\omega$ and find $y\in\omega^\omega$ such that if $y\leq^* z$ then $C^{L[S][z]}\subseteq C^{L[S][x]}$ for any $z\in\omega^\omega$.
   Since $f$ is convergent and $C^{L[S][x]} \in \hat{\mcalM}$,
   there is $y\in \omega^\omega$ such that 
   $f(z) \subseteq C^{L[S][x]}$ for any $z \in \omega^\omega$ with $y \leq^* z$.
   Note that $C^{L[S][z]} \subseteq f (z)$ 
   for each $z \in \omega^\omega$ by Lemma \ref{Cohen_cofinal}. 
   Thus $y$ is as desired.
 \end{proof}
 
 We will prove that the assumption of Lemma \ref{d_cofM_equiv} holds.
 For this we use the Laver forcing.
 Next we review basics on the Laver forcing.
 Note that we only use forcings over models satisfying $\AC$.
 
 Let $T \subseteq \omega^{<\omega}$ be a tree.
 $\stem (T)$ is the $\subseteq$-maximal $t\in T$ which is comparable with all $s\in T$.
 For $t \in T$ let $\suc_T (t)$ denote the set of all immediate successors of $t$ in $T$.
 $T$ is called a \textit{Laver tree} if $\stem (T)$ exists, and $\suc_T (t)$ is infinite for all $t \in T$ with $\stem (T) \subseteq t$.
 
 The Laver forcing notion $\mbbL$ is the poset of all Laver trees ordered by inclusion.
 The following fact is well-known.
 The proof can be found in \cite{bartoszynski1995set}.
 
 \begin{fact} \label{fact:Laver_basic}
 Assume $\AC$ in $V$.
 Let $H$ be an $\mbbL$-generic filter over $V$.
 Then the following hold in $V[H]:$
 \begin{enumerate}
     \item $\bigcup \bigcap H \in \omega^\omega$, and $x \leq^* \bigcup \bigcap H$ for any $x \in ( \omega^\omega )^V$,
     \item There is no Cohen real over $V$.
 \end{enumerate}
 \end{fact}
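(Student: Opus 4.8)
The plan is to treat the two clauses separately: clause (1) is an elementary density/pruning argument about the Laver real, while clause (2) is the real content and reduces to the Laver property of $\mbbL$.

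For clause (1), I would first identify the Laver real. Write $\ell = \bigcup \bigcap H$. Since $H$ is a filter of Laver trees, for $T_0, T_1 \in H$ there is $S \in H$ with $S \subseteq T_0 \cap T_1$, so $\stem(S) \supseteq \stem(T_0), \stem(T_1)$; hence the stems of conditions in $H$ are linearly ordered by $\subseteq$ and $\ell = \bigcup \{ \stem(T) \mid T \in H \}$ is a single branch. The set $D_n = \{ T \in \mbbL \mid |\stem(T)| \geq n \}$ is dense (given $T$, restrict it above one immediate successor of its stem, repeatedly), so $\ell$ is infinite, i.e.~$\ell \in \omega^\omega$. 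To see that $\bigcap H$ is exactly $\{ \ell \restrict n \mid n < \omega \}$ (whence $\bigcup \bigcap H = \ell$), note that each $\ell \restrict n$ lies in every $T \in H$ (refine below $T$ to a condition whose stem extends $\ell \restrict n$), and conversely for any $t$ the set $E_t = \{ T \in \mbbL \mid t \notin T \text{ or } t \subseteq \stem(T) \}$ is dense: if $t \in T$ then $t$ is comparable with $\stem(T)$, and if $\stem(T) \subsetneq t$ we may restrict $T$ to the nodes comparable with $t$, obtaining a Laver tree with stem $t$. Hence any $t \in \bigcap H$ satisfies $t \subseteq \stem(T) \subseteq \ell$ for some $T \in H$. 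For the domination, given $x \in (\omega^\omega)^V$ and a condition $T$ with stem $s$, let $S$ be the tree with stem $s$ obtained by keeping, above each node $t \supseteq s$, only the immediate successors $t^\frown m \in T$ with $m \geq x(|t|)$; since $\suc_T(t)$ is infinite, infinitely many survive, so $S \leq T$ is a Laver tree and $S \Vdash x(i) \leq \ell(i)$ for all $i \geq |s|$. Thus the conditions forcing $x \leq^* \ell$ are dense, and genericity gives $x \leq^* \ell$ in $V[H]$.

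For clause (2), I would use the characterization recalled earlier: it suffices to show that every real in $V[H]$ lies in some Borel meager set coded in $V$. Fix an $\mbbL$-name $\dot c$ for an element of $2^\omega$ and a condition $T$; the goal is to find $S \leq T$ and a $V$-coded meager set $M$ with $S \Vdash \dot c \in M$, since density of such $S$ together with genericity then places $\dot c[H]$ in a $V$-coded meager set, and as $\dot c$ is arbitrary no real of $V[H]$ is Cohen over $V$. To produce $S$ and $M$ I would invoke the Laver property of $\mbbL$: there is a function $h \in V$ (one may take $h(k) = 2^k$) so that, after fixing in $V$ a partition of $\omega$ into consecutive intervals $I_k$ with $2^{|I_k|} > h(k)$ and setting $\dot g(k) = \dot c \restrict I_k \in 2^{I_k}$, one finds $S \leq T$ and a slalom $\phi \in V$ with $|\phi(k)| \leq h(k)$ and $S \Vdash \dot g(k) \in \phi(k)$ for all $k$. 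Then $M = \{ y \in 2^\omega \mid \forall k\ (y \restrict I_k \in \phi(k)) \}$ is closed, coded in $V$, contains $\dot c$ below $S$, and is nowhere dense: for any $t \in 2^{<\omega}$ choose $k$ with $\min I_k > |t|$; since $|\phi(k)| \leq h(k) < 2^{|I_k|}$ there is a pattern on $I_k$ omitted by $\phi(k)$, giving an extension of $t$ disjoint from $M$. Hence $M$ is a $V$-coded meager set as required.

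The main obstacle is the Laver property itself, i.e.~the slalom-capturing step. I would prove it by a fusion argument for Laver trees: build a sequence $T = T_0 \geq_0 T_1 \geq_1 \cdots$ with $T_{n+1} \leq_n T_n$ fixing the first $n$ splitting levels, and at stage $n$ thin the successor sets so that, passing to the fusion $S = \bigcap_n T_n$, the value $\dot g(k)$ is determined up to a set of size $\leq h(k)$ by finitely many splitting levels of $S$. The delicate point is exactly this width control: Laver forcing does \emph{not} have pure decision (one cannot in general decide a bounded value while keeping the stem fixed), so $\dot g(k)$ cannot be pinned to a single possibility; instead one must show that refining while preserving infinite branching confines it to a set whose size grows only like $h(k)$. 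Balancing the preservation of enough successors (to keep the tree Laver) against reading enough of $\dot c$ (to narrow the slalom) is the technical heart, and I would carry it out via the standard rank/fusion analysis of Laver trees as in \cite{bartoszynski1995set}.
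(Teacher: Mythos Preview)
Your proposal is correct. The paper does not actually prove this fact: it records it as well-known and refers the reader to \cite{bartoszynski1995set}, so there is no in-paper argument to compare against. Your route for (2) --- reduce ``no Cohen real'' to the Laver property and then capture $\dot c$ by a ground-model nowhere dense set via a slalom on blocks $I_k$ --- is exactly the standard argument one finds in that reference, and your outline of the fusion needed for the Laver property is accurate (including the caveat that pure decision fails, so one only narrows $\dot g(k)$ to a small set rather than a single value). Clause (1) is handled cleanly by your density arguments.
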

 
 Suppose $W$ is an inner model of $\ZFC$. $x \in \omega^\omega$ is called a \textit{Laver real} over $W$ if there is an $\mbbL^W$-generic filter $H$ over $W$ such that $x = \bigcup \bigcap H$.
 Note that $x \in \omega^\omega$ is a Laver real over $W$ if and only if
 \[
 H = \{ T \in \mbbL^W \mid x \in [T] \}
 \]
 is an $\mbbL^W$-generic filter over $W$, where $[T]$ denotes the set of all $x \in \omega^\omega$ such that $x \restrict k \in T$ for all $k < \omega$.
 
 To state other basic properties of $\mbbL$, we give some notation.
 Let $T \subseteq \omega^{< \omega}$ be a tree.
 For $t \in T$ let $T \restrict t$ be the set of all elements of $T$ comparable with $t$.
 
 For $S, T\in \mbbL$ and $n < \omega$, we write $S \leq_n T$ if $S \subseteq T$, $\stem (S) = \stem (T)$, and $S \cap \omega^{m+n} = T \cap \omega^{m+n}$, where $m = \dom ( \stem (S) )$.
 For $S \in \mbbL$ and $D \subseteq \mbbL$, we say that $S$ \textit{meets} $D$ if there is $T \in D$ with $S \subseteq T$.
 
 A \textit{front} of $T$ is a pairwise incomparable subset of $T$ such that for any $x \in [T]$ there is $k < \omega$ with $x \restrict k \in A$.
 Note that a pairwise incomparable $A \subseteq T$ is a front of $T$ if and only if
 \[
 S = \{ s \in T \mid \forall k \leq \dom (s) \ ( s \restrict k \notin A ) \}
 \]
 is (conversely) well-founded. So the statement ``$A$ is a front of $T$'' is absolute between transitive models of $\ZF$.
 
 
 The following lemma is standard. The proof for the case when $n = 0$ can be found in \cite[1.9 Fact]{judah1990kunen}.
 We note that the argument works in our base setting $\ZF+\DC$.
 
 \begin{lemma}\label{lem:front}
   Suppose $T \in \mbbL$, $n < \omega$ and $D\subseteq \mbbL$ is predense.
   Then there are $S \leq_n T$ and a front $A$ of $S$ such that $S_a$ meets $D$ for all $a\in A$.
 \end{lemma}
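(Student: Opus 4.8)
The plan is to first reduce to the case $n = 0$, and then to handle that case by a fusion argument driven by a single ``goodness'' predicate on the nodes of $T$, avoiding an explicit rank.

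\emph{Reduction to $n=0$.} Write $s = \stem(T)$ and $m = \dom(s)$, and note that every node of length $m+n$ in $T$ extends $s$. I would apply the $n=0$ case separately to each subtree $T \restrict t$ with $t \in T \cap \omega^{m+n}$, each of which is a Laver tree with stem $t$, obtaining $S_t \leq_0 (T\restrict t)$ together with a front $A_t$ of $S_t$, which we may take to consist of nodes extending $t$, such that $S_t \restrict a$ meets $D$ for all $a \in A_t$. Setting $S = \bigcup_{t \in T\cap\omega^{m+n}} S_t$ and $A = \bigcup_t A_t$, the tree $S$ agrees with $T$ up to level $m+n$, so $S \leq_n T$, is again Laver, and $A$ is a front of $S$: every branch of $S$ passes through a unique $t$ at level $m+n$ and then meets $A_t$, while incomparability across distinct $t$'s is immediate because the relevant front elements extend the pairwise incomparable nodes of $T\cap\omega^{m+n}$.

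\emph{The case $n=0$.} Fix $s = \stem(T)$. Call a node $t \in T$ with $t \supseteq s$ \emph{good} if there are $U \leq_0 (T \restrict t)$ and a front $B$ of $U$ with $U \restrict a$ meeting $D$ for every $a \in B$, and \emph{bad} otherwise. I would record two closure facts. First, if $T \restrict t$ itself meets $D$ then $t$ is good, witnessed by $B = \{t\}$. Second, if infinitely many elements of $\suc_T(t)$ are good, then $t$ is good: retaining only the good immediate successors and gluing their witnessing subtrees and fronts yields a Laver tree $U \leq_0 (T\restrict t)$ whose front is the union of the successors' fronts. The goal is then to show that $s$ is good, which is exactly the assertion for $n=0$.

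\emph{Deriving the conclusion.} Suppose toward a contradiction that $s$ is bad. By the contrapositive of the second closure fact, each bad node has only finitely many good immediate successors, hence cofinitely many bad ones. Therefore
\[
T' = \{\, t \in T : t \subseteq s \,\} \cup \{\, t \supseteq s : t' \text{ is bad whenever } s \subseteq t' \subseteq t \,\}
\]
is a Laver tree with stem $s$, all of whose nodes extending $s$ are bad. Since $D$ is predense, $T'$ is compatible with some $T'' \in D$; a common extension $W$ (so $W \subseteq T'$ and $W \subseteq T''$) has a stem $w \supseteq s$ lying in $T'$, and $W \leq_0 (T \restrict w)$ witnesses through the front $\{w\}$ that $w$ is good, since $W \subseteq T''$ shows $W$ meets $D$. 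This contradicts that every node of $T'$ above $s$ is bad, so $s$ is good.

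\emph{Main obstacle.} I expect the delicate points to be the two verifications inside the gluing step of the second closure fact: that the union of the witnessing subtrees over the good successors is again a Laver tree satisfying $\leq_0$ with the correct stem, and that the union of the corresponding fronts is genuinely a front, i.e.\ pairwise incomparable and met by every branch. These are routine but must be checked with care. The remaining work---the reduction to $n=0$ and the predensity contradiction---is then essentially bookkeeping, and the whole argument stays within $\ZF+\DC$, since ``good'' is a definable predicate and the tree $T'$ is defined outright rather than chosen.
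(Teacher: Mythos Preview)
Your proof is correct, and your reduction from general $n$ to $n=0$ is identical to the paper's. For the case $n=0$ the paper simply cites \cite[1.9 Fact]{judah1990kunen}, whereas you supply a self-contained good/bad-node argument; this is a standard and correct way to prove that base case, and your care about staying within $\ZF+\DC$ (the only choice needed is countable choice for witnesses in the gluing step, which $\DC$ provides) matches the paper's remark that the cited argument works in that setting.
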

 
 \begin{proof}
 See \cite[1.9 Fact]{judah1990kunen} for the proof of the case when $n=0$.
 Using the case when $n=0$, we prove the case when $n>0$.
 
 Let $m = \dom ( \stem (T) )$ and $B = T \cap \omega^{m+n}$.
 By the case when $n=0$, for each $t \in B$, we can take $S_t \leq_0 T \restrict t$ and a front $A_t$ of $S_t$ such that $S_t \restrict a \in D$ for all $a \in A_t$.
 Then it is easy to check that $S = \bigcup_{t \in B} S_t$ and $A = \bigcup_{t \in B} A_t$ are as desired.
 \end{proof}

 Suppose $W$ is an inner model of $\ZFC$, and $T \in \mbbL$.
 We say call $T$ an $\mbbL^W$-\textit{generic condition} over $W$ if for any predense $D \subseteq \mbbL^W$ with $D \in W$, there is a front $A$ of $T$ such that $T \restrict a$ meets $D$ for any $a \in A$.
 Suppose $T$ is an $\mbbL^W$-generic condition over $W$.
 Then every predense $D \subseteq \mbbL^W$ with $D \in W$ is predense below $T$ in $\mbbL$.
 Moreover, every $x \in [T]$ is a Laver real over $W$.
 
 The following lemma is almost the same as the properness of the Laver forcing.

 \begin{lemma}\label{Laver_generic}
   Suppose $W$ is an inner model of $\ZFC$ with $( 2^{2^\omega} )^W$ is countable. Then for any $T \in \mbbL^W$ there is an $\mbbL^W$-generic condition $S \in \mbbL$ over $W$ such that $S \subseteq T$.
 \end{lemma}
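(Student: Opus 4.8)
The plan is to construct $S$ by a fusion argument, dealing with one predense set at each stage and using Lemma \ref{lem:front} as the single-step tool.

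First I would exploit the countability hypothesis. Every Laver tree in the sense of $W$ is a Laver tree, so $\mbbL^W \subseteq \mbbL$, and the relations $\leq_n$ together with the notion of a front are absolute between $W$ and $V$. Since $\mbbL^W \subseteq \mcalP(\omega^{<\omega})$, the collection of $D \in W$ that are predense in $\mbbL^W$ is a set of $W$-cardinality at most $(2^{2^\omega})^W$, hence countable in $V$; fix an enumeration $\langle D_n \mid n < \omega \rangle$ of it. By the definition of a generic condition it then suffices to produce a single $S \in \mbbL$ with $S \subseteq T$ for which, for every $n$, there is a front of $S$ all of whose nodes $a$ satisfy that $S \restrict a$ meets $D_n$.

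Next I would build, by recursion in $V$ along this enumeration (using $\DC$), a sequence $T = S_0 \geq_0 S_1 \geq_1 S_2 \geq_2 \cdots$ with each $S_n \in \mbbL^W$ and $S_{n+1} \leq_n S_n$, together with fronts $A_n$. Given $S_n$, I apply Lemma \ref{lem:front} inside $W$ --- legitimate since $W \models \ZFC$, $D_n$ is predense in $\mbbL^W$, and $D_n, S_n \in W$ --- to the data $(S_n, n, D_n)$, obtaining $S_{n+1} \leq_n S_n$ in $\mbbL^W$ and a front $A_n$ of $S_{n+1}$ such that $S_{n+1} \restrict a$ meets $D_n$ for every $a \in A_n$. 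Let $S = \bigcap_n S_n$ and $m = \dom(\stem(T))$; since stems are preserved along $\leq_n$, one has $\stem(S_n) = \stem(T)$ for all $n$.

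Finally I would verify the two properties of $S$. The bookkeeping point is that $S_{n+1} \leq_n S_n$ freezes the tree up to level $m+n$: for $j > k$ all the $S_j$ agree up to level $m+k$, so $S \cap \omega^{\leq m+k} = S_{k+1} \cap \omega^{\leq m+k}$. From this I would check that any $t \in S$ above the stem, of length $\ell$, satisfies $\suc_S(t) = \suc_{S_{k+1}}(t)$ once $m+k \geq \ell + 1$, which is infinite; hence $S$ is a Laver tree, and clearly $S \subseteq T$ with $\stem(S) = \stem(T)$. For genericity, fix $n$ and put $A_n' = A_n \cap S$. Every branch of $S$ is a branch of $S_{n+1}$, so it meets $A_n$ at a node lying on the branch and therefore in $S$; thus $A_n'$ is a front of $S$. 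For $a \in A_n'$ one has $S \restrict a \subseteq S_{n+1} \restrict a \subseteq T'$, where $T' \in D_n$ witnesses that $S_{n+1} \restrict a$ meets $D_n$, and $S \restrict a \in \mbbL$, so $S \restrict a$ meets $D_n$. As every predense $D \in W$ is some $D_n$, $S$ is the desired generic condition.

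The hard part is the interface between the single-step Lemma \ref{lem:front} and the fusion: one must arrange the levels so that each $D_n$ is handled before the relevant initial segment of the tree is frozen, and then confirm that the fronts $A_n$ and the meeting property genuinely survive passage to the limit $S$. This is precisely where the level control built into $\leq_n$ and the inclusion $S \subseteq S_{n+1}$ are used; once Lemma \ref{lem:front} is granted, the remaining verifications are routine fusion bookkeeping.
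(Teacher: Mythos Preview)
Your proposal is correct and follows essentially the same fusion argument as the paper: enumerate the predense subsets of $\mbbL^W$ in $W$ using the countability hypothesis, apply Lemma \ref{lem:front} recursively to build a $\leq_n$-descending sequence below $T$, and verify that the intersection is a Laver tree on which the fronts $A_n \cap S$ witness genericity. The paper's proof is terser (it leaves the fusion indices and the verification that $S \in \mbbL$ implicit), but the strategy and the key lemma invoked are identical.
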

 
 \begin{proof}
   Let $\langle D_n\mid n<\omega \rangle$ be an enumeration of predense subsets of $\mbbL^W$ in $W$.
   By Lemma \ref{lem:front}, we can recursively construct a descending sequence $\langle T_n \mid n < \omega \rangle$ in $\mbbL^W$ below $T$ together with a sequence $\langle A_n \mid n < \omega \rangle$ such that $A_n$ is a front of $T_n$, and $T_n \restrict a$ meets $D_n$ for all $a \in A_n$.
 
   Let $S = \bigcap_{n \in \omega} T_n \in \mbbL$.
   Then $S \subseteq T$.
   Moreover, for each $n < \omega$, it is easy to check that $A_n \cap S$ is a front of $S$, and $S \restrict a$ meets $D_n$.
   So $S$ is an $\mbbL^W$-generic condition over $W$.
 \end{proof}
 
 
 
 \begin{theorem}
   Assume $(\star)$.
   Then $(\mcalM, \subseteq) \npreceq_{pT} (\omega^\omega, \leq^*)$.
 \end{theorem}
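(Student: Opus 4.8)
The plan is to reduce everything to Lemma \ref{d_cofM_equiv} and then to verify its hypothesis, so fix a set $S$ of ordinals; I must produce $x \in \omega^\omega$ such that for every $y \in \omega^\omega$ there is $z$ with $y \leq^* z$ and $C^{L[S][z]} \nsubseteq C^{L[S][x]}$. Throughout I use that, under $(\star)$, $\omega_1$ is inaccessible in $L[S']$ for every set $S'$ of ordinals coding $S$ together with finitely many reals, so by $\mathsf{GCH}$ in $L[S']$ the set $(2^{2^\omega})^{L[S']}$ has cardinality $\omega_2^{L[S']} < \omega_1$ and is therefore countable in $V$; in particular $(2^\omega)^{L[S][x]}$ and $(2^\omega)^{L[S][z]}$ are countable, so $C^{L[S][x]}$ and $C^{L[S][z]}$ are comeager with meager complements. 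Using $\DC$ and the Baire category theorem, fix $x \in 2^\omega$ that is Cohen over $L[S]$. I claim this $x$ works, and that the witness to the non-inclusion can be taken to be $x$ itself: since $x \in L[S][x]$, the singleton $\{x\}$ is nowhere dense and coded in $L[S][x]$, so $x \notin C^{L[S][x]}$. Hence it suffices to arrange, for each $y$, a real $z \geq^* y$ with $x$ Cohen over $L[S][z]$.

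Now fix $y$ and put $W = L[S][x][y]$, a transitive model of $\ZFC$ with $(2^{2^\omega})^W$ countable. By Lemma \ref{Laver_generic} there is $T_* \in \mbbL$ which is an $\mbbL^W$-generic condition over $W$; fix any $z \in [T_*]$. Then $z$ is a Laver real over $W$, so by Fact \ref{fact:Laver_basic}(1) every real of $W$, and in particular $y$, satisfies $y \leq^* z$. Granting the key claim that $x$ is Cohen over $L[S][z]$, we then have $x \in C^{L[S][z]} \setminus C^{L[S][x]}$, so $C^{L[S][z]} \nsubseteq C^{L[S][x]}$, and an application of Lemma \ref{d_cofM_equiv} completes the proof.

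The key claim—that $x$ is Cohen over $L[S][z]$—is the crux, and I expect it to be the main obstacle. Morally it is an instance of the fact that Laver forcing preserves Cohen reals: adjoining the dominating Laver real $z$ over a model containing $x$ cannot push the Cohen-over-$L[S]$ real $x$ into any meager set coded from $z$ (and $S$) alone. The difficulty is that I need this preservation \emph{relative to the inner model} $L[S]$, whereas $z$ is produced as a Laver real over the larger model $W$; consequently the naive mutual-genericity argument via the product lemma is unavailable, since $\mbbL^W \notin L[S]$ and the passage $L[S] \to L[S][x] \to L[S][x][z]$ is a genuine iteration rather than a product. I would instead argue directly with the front/generic-condition technology of Lemma \ref{lem:front} and Lemma \ref{Laver_generic}: assuming toward a contradiction that $x$ is not Cohen over $L[S][z]$, genericity of $z \in [T_*]$ yields a condition $T \leq T_*$ in $\mbbL^W$ and an $\mbbL^W$-name $\dot a$ with $T \Vdash (\dot a \in L[S][\dot z] \ \wedge \ \check x \in M_{\dot a})$, where $M_{\dot a}$ is the meager Borel set coded by $\dot a$. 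Using that $\mbbL^W$ has the Laver property—the same feature that underlies Fact \ref{fact:Laver_basic}(2), namely that $\mbbL^W$ adds no Cohen real over $W$—one shows, along a fusion below $T$, that no such name whose values are read off from $L[S][\dot z]$ can be forced to capture the fixed real $x$, precisely because $x$ is Cohen over $L[S]$ and defeats the finitely many $L[S]$-options into which the Laver property traps the relevant part of $\dot a$ at each stage. Making this trapping precise is the technical heart of the theorem.
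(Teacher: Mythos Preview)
Your reduction to Lemma \ref{d_cofM_equiv} is correct, and your observation that $x \notin C^{L[S][x]}$ trivially is fine. The gap is exactly where you flag it: the ``key claim'' that $x$ remains Cohen over $L[S][z]$ is not proved, and your sketch does not close it. The Laver property traps the name $\dot a$ into a thin tree, but that tree lives in $W = L[S][x][y]$, not in $L[S]$; your assertion that the resulting options are ``$L[S]$-options'' is unjustified, and since $x$ is only assumed Cohen over $L[S]$ (not over $W$) you cannot conclude that $x$ defeats them. Moreover, your $z$ is Laver only over $W$, not over $L[S]$, so you have essentially no control over what $L[S][z]$ looks like; there is no reason its Borel meager codes should be captured by anything definable from $L[S]$ alone. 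Even if a preservation theorem of this shape is true, it would be a theorem in its own right, not a corollary of Lemma \ref{lem:front} plus a fusion.

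The paper avoids this difficulty entirely by changing both the choice of $x$ and the role of the witness. It takes $x$ so that $(2^{2^\omega})^{L[S]}$ is countable \emph{in $L[S][x]$}, and then, given $y$, it builds $z$ to be a Laver real over \emph{both} $L[S]$ and $L[S][x]$, by first taking (inside $L[S][x]$) an $\mbbL^{L[S]}$-generic condition $T$ over $L[S]$, and then refining to an $\mbbL^{L[S][x]}$-generic condition $T' \subseteq T$ over $L[S][x]$; any $z \in [T']$ with $z \geq^* y$ works. Because $z$ is Laver over $L[S]$, one has $(2^\omega)^{L[S][z]} \subseteq (2^{2^\omega})^{L[S]}$, which is countable in $L[S][x][z]$. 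Hence the union of all Borel meager sets coded in $L[S][z]$ is meager in $L[S][x][z]$, and by Baire category one picks a real $w \in L[S][x][z]$ outside it; this $w$ is Cohen over $L[S][z]$. On the other hand $L[S][x][z]$ is a Laver extension of $L[S][x]$, so by Fact \ref{fact:Laver_basic}(2) no real of $L[S][x][z]$ is Cohen over $L[S][x]$, whence $w \notin C^{L[S][x]}$. Thus the witness to $C^{L[S][z]} \not\subseteq C^{L[S][x]}$ is produced \emph{after} $z$, by a counting argument, rather than fixed in advance; this eliminates any need for a preservation theorem.
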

 
 \begin{proof}
  Suppose $S$ is a set of ordinals.
  By Lemma \ref{d_cofM_equiv}, it suffices to find $x \in \omega^\omega$ such that for all $y \in \omega^\omega$ there is $z \in \omega^\omega$ with $y \leq^* z$ and $C^{L[S][z]} \not\subseteq C^{L[S][x]}$.
 
   We can take $x \in \omega^\omega$ such that $( 2^{2^\omega} )^{L[S]}$ is countable in $L[S][x]$.
   Suppose $y \in \omega^\omega$.
   We find $z \geq^* y$ with $C^{L[S][z]} \not\subseteq C^{L[S][x]}$.
   In $L[S][x]$, by Lemma \ref{Laver_generic}, we can take an $\mbbL^{L[S]}$-generic condition $T \in \mbbL^{L[S][x]}$ over $L[S]$.
   Note that $T$ is an $\mbbL^{L[S]}$-generic condition over $L[S]$ in $V$, too.
   Moreover, again by Lemma \ref{Laver_generic}, take an $\mbbL^{L[S][x]}$-generic condition $T' \in \mbbL$ over $L[S][x]$ with $T' \subseteq T$.
   Since $T'$ is infinitely branching above its stem, we can take $z \in [T']$ with $y \leq^* z$.
   We show that $C^{L[S][z]} \not\subseteq C^{L[S][x]}$.
   Note that $z$ is a Laver real over both of $L[S]$ and $L[S][x]$.
   
   In $L[S][z]$ let $\Gamma$ be the collection of Borel codes of meager subsets of $2^\omega$.
   Note that $( 2^\omega )^{L[S][z]} \leq ( 2^{2^\omega} )^{L[S]}$ since $z$ is a Laver real over $L[S]$.
   So $\Gamma$ is countable in $L[S][x][z]$.
   Then $B = \bigcup_{c\in \Gamma} B_c^{L[S][x][z]}$ is meager in $L[S][x][z]$, where $B^{L[S][x][z]}_c$ is the Borel meager set coded by $c$ in $L[S][x][z]$.
   So we can take $w\in (\omega^\omega)^{L[S][x][z]} \setminus B$.
   Then $w \in C^{L[S][z]}$.
   On the other hand, $w$ is in $L[S][x][z]$, a Laver extension of $L[S][x]$.
   Thus $w \notin C^{L[S][x]}$ by Fact \ref{fact:Laver_basic} (2).
 \end{proof}
 \subsection{($\mcalM, \subseteq$) v.s. ($\mcalN, \subseteq$)}
  
 In this subsection, we prove the pre-Tukey relations 
 $\meagerideal \preceq_{pT} \nullideal$ and 
 $\nullideal \npreceq_{pT} \meagerideal$ under $(\star)$.

 In $\ZFC$ we can construct a Tukey map from 
 $\meagerideal$ to $\nullideal$.
 The construction can be found in the proof of  Theorem 2.3.1 in \cite{bartoszynski1995set}.
 In the construction, Fact \ref{ZFC_meager_null} below is proved in $\ZF+\DC$.
 To construct a Tukey map using Fact \ref{ZFC_meager_null}, it is necessary to choose a countable sequence of nowhere dense sets corresponding to each meager set.
 On the other hand, by considering the collection of all such sequences, we can define a pre-Tukey map in $\ZF+\DC$, without using $\AC$.

\begin{fact}\label{ZFC_meager_null}
  Let $\cNWD$ be the set of all closed nowhere dense subsets of $2^\omega$ and let $\mathrm{OPEN}$ be the set of all open subsets of $2^\omega$.
  Then there are functions 
  $\varphi \colon {\cNWD}^\omega \rightarrow \mcalN$ and $\tilde{\varphi} \colon \mathrm{OPEN}^\omega \rightarrow \mcalM$ such that
  \begin{center}
    $\varphi (\langle F_n \mid n<\omega \rangle) \subseteq \bigcap_{n<\omega} G_n 
    \Rightarrow \bigcup_{n<\omega} F_n \subseteq \tilde{\varphi}(\langle G_n \mid n<\omega \rangle)$
  \end{center}
  for any $\langle F_n \mid n<\omega \rangle \in \cNWD^\omega$ and any 
  $\langle G_n \mid n<\omega \rangle \in \mathrm{OPEN}^\omega$ with $\bigcap_{n<\omega} G_n \in \mcalN$.
\end{fact}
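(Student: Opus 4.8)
The plan is to reproduce the classical $\ZFC$ construction of a Galois--Tukey morphism from $\meagerideal$ to $\nullideal$ at the level of codes, and to observe that every step is canonical and hence goes through in $\ZF+\DC$ without any appeal to $\AC$. I regard the two required functions as the two halves of such a morphism: $\varphi$ sends a code $\langle F_n\mid n<\omega\rangle$ of the $F_\sigma$ meager set $\bigcup_n F_n$ to a null set, while $\tilde\varphi$ sends a code $\langle G_n\mid n<\omega\rangle$ of the $G_\delta$ set $\bigcap_n G_n$ to a meager set, and the displayed implication is exactly the morphism condition. The backbone is the pair of Bartoszy\'nski normal forms: every meager set is contained in one of the form $\{x\in 2^\omega\mid \forall^\infty k\ x\restrict I_k\ne c_k\}$ for an interval partition $\langle I_k\rangle$ of $\omega$ and patterns $c_k\in 2^{I_k}$, and every null set is contained in one of the form $\{x\in 2^\omega\mid \exists^\infty k\ x\restrict I_k\in D_k\}$ with $\sum_k |D_k|\,2^{-|I_k|}<\infty$ (null by Borel--Cantelli).

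First I would extract these normal forms \emph{canonically} from the given codes. Given $\langle F_n\rangle$ I replace it by the increasing sequence $\langle\bigcup_{m\le n}F_m\rangle$ and then read off an interval partition and forbidden patterns by a fusion recursion that, at stage $k$, chooses the least admissible block and the least $c_k\in 2^{I_k}$ (in a fixed well-ordering) whose associated clopen sets avoid $F_k$; such a $c_k$ exists because $F_k$ is closed nowhere dense, and arranging this for every $k$ forces the comeager set $\{x\mid \exists^\infty k\ x\restrict I_k=c_k\}$ to be disjoint from $\bigcup_n F_n$, i.e. $\bigcup_n F_n\subseteq\{x\mid \forall^\infty k\ x\restrict I_k\ne c_k\}$. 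No choice is needed: each step picks the least element of a definable, finite-or-well-ordered set, and the recursion is justified by $\DC$. The same remarks apply to reading a decreasing sequence of small-measure open covers off $\langle G_n\rangle$, using absoluteness of Borel codes and canonical code manipulations available in $\ZF+\DC$.

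The nontrivial content is the definition of $\varphi$ together with the matching decoding $\tilde\varphi$. The naive choice of letting $\varphi(\langle F_n\rangle)$ be the comeager null set $\{x\mid \exists^\infty k\ x\restrict I_k=c_k\}$ fails, since a null cover of a comeager set carries no information about the patterns $c_k$; and for ``thin'' meager sets, where the $|I_k|$ must grow fast, the set $\bigcup_n F_n$ is itself conull and cannot be recovered cheaply from measure data. The correct construction, which is the heart of the proof of Theorem~2.3.1 in \cite{bartoszynski1995set}, \emph{regroups} the blocks $I_k$ into superblocks and encodes each pattern $c_k$ redundantly, so that the resulting null set $\varphi(\langle F_n\rangle)$ is robust: any null set $B$ containing it must, by a counting estimate, reveal on all but finitely many superblocks enough of $c_k$, because a cover of too small measure cannot avoid the fattened targets. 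The decoding $\tilde\varphi(\langle G_n\rangle)$ then reads the recovered candidate patterns out of a canonical small-measure cover of $\bigcap_n G_n$ and returns the meager set $\{x\mid \forall^\infty k\ x\restrict I_k \text{ avoids the recovered candidates}\}$, which contains $\bigcup_n F_n$; if $\bigcap_n G_n$ is not null then $\tilde\varphi$ returns $\emptyset$, so that $\tilde\varphi$ is total with values in $\mcalM$.

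I expect the main obstacle to be precisely this measure--category transfer, namely the counting lemma guaranteeing that a cover of $\varphi(\langle F_n\rangle)$ by a null set of small measure is forced to expose the forbidden patterns on cofinitely many superblocks; this is where summability of $\sum_k |D_k|\,2^{-|I_k|}$ and the regrouping are played against one another, and it is the genuinely nontrivial combinatorics behind the displayed implication. A secondary, purely bookkeeping obstacle is to confirm that every selection made in building $\varphi$, $\tilde\varphi$, and the two normal forms is a least-witness choice from a definable set, so that both functions are genuine class functions of $\ZF+\DC$ and the whole argument avoids $\AC$.
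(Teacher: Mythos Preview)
The paper does not prove this statement: it is labelled a \emph{Fact} and is explicitly attributed to the proof of Theorem~2.3.1 in \cite{bartoszynski1995set}, with the sole added remark that ``in the construction, Fact~\ref{ZFC_meager_null} below is proved in $\ZF+\DC$.'' So there is no in-paper argument to compare against; the paper's ``proof'' is the citation.

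Your sketch is the standard Bartoszy\'nski construction from that reference, and your emphasis on making every selection a least-witness choice so that $\varphi$ and $\tilde\varphi$ are genuine functions in $\ZF+\DC$ is exactly the point the paper is gesturing at when it says the construction goes through without $\AC$. The outline --- canonical normal forms for meager and null sets, the failure of the naive encoding, the redundant regrouping into superblocks, and the counting lemma that lets a null cover expose the forbidden patterns --- matches the cited argument. One small wording quibble: the set $\{x\mid \exists^\infty k\ x\restrict I_k=c_k\}$ is not automatically null; it becomes null only once the $|I_k|$ grow fast enough that $\sum_k 2^{-|I_k|}<\infty$, which is part of what the canonical extraction must arrange. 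With that caveat, your plan is correct and is the same route the paper defers to.
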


\begin{theorem}
$(\mcalM, \subseteq)\preceq_{pT} (\mcalN, \subseteq)$. 
\end{theorem}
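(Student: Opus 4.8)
The plan is to construct a pre-Tukey map $\pi \colon \mcalM \to \mcalP(\mcalN)$ directly and then invoke Lemma \ref{Tukey_map}. The whole point of the pre-Tukey formulation is that it frees us from having to \emph{choose}, for each meager set, a single decomposition into nowhere dense sets (which would require $\AC$); instead we collect all such decompositions. Concretely, using the function $\varphi$ from Fact \ref{ZFC_meager_null}, I would set
\[
  \pi(M) = \{ \varphi(\langle F_n \mid n<\omega\rangle) \mid \langle F_n \mid n<\omega\rangle \in \cNWD^\omega \ \text{and}\ M \subseteq \bigcup_{n<\omega} F_n \}
\]
for each $M \in \mcalM$. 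Since $M$ is meager, it is covered by some countable union of closed nowhere dense sets, so the index set is non-empty and hence $\pi(M) \neq \emptyset$; moreover $\pi$ is given by a formula, with no appeal to $\AC$.

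Next I would verify the pre-Tukey condition: given $N \in \mcalN$, I must produce $M \in \mcalM$ such that $\pi(M') \cap N \downarrow_{\subseteq} \neq \emptyset$ implies $M' \subseteq M$ for all $M' \in \mcalM$. To obtain $M$, fix a $G_\delta$ null set $N^* \supseteq N$ (which exists in $\ZF + \DC$ by outer regularity of Lebesgue measure) and a sequence $\langle G_n \mid n<\omega\rangle \in \mathrm{OPEN}^\omega$ with $\bigcap_{n<\omega} G_n = N^*$, so in particular $\bigcap_{n<\omega} G_n \in \mcalN$. Then put $M = \tilde{\varphi}(\langle G_n \mid n<\omega\rangle) \in \mcalM$, using the function $\tilde{\varphi}$ from Fact \ref{ZFC_meager_null}. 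This is a single existential instantiation for the fixed $N$, so no choice principle is invoked.

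Finally I would check that this $M$ works. Suppose $\pi(M') \cap N \downarrow_{\subseteq} \neq \emptyset$. By the definition of $\pi$, there is $\langle F_n \mid n<\omega\rangle \in \cNWD^\omega$ with $M' \subseteq \bigcup_{n<\omega} F_n$ and $\varphi(\langle F_n \mid n<\omega\rangle) \subseteq N$. Since $N \subseteq N^* = \bigcap_{n<\omega} G_n$, we get $\varphi(\langle F_n \mid n<\omega\rangle) \subseteq \bigcap_{n<\omega} G_n \in \mcalN$, so the defining implication of Fact \ref{ZFC_meager_null} yields $\bigcup_{n<\omega} F_n \subseteq \tilde{\varphi}(\langle G_n \mid n<\omega\rangle) = M$. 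Hence $M' \subseteq \bigcup_{n<\omega} F_n \subseteq M$, as required. I do not expect a genuine obstacle here: the combinatorial heart of the statement is already packaged in Fact \ref{ZFC_meager_null}, and the only thing to be careful about is the choice-free bookkeeping, namely that $\pi$ is a bona fide $\ZF$-definable set-valued map and that the witness $M$ is extracted from $N$ by a single instantiation rather than a uniform selection of decompositions.
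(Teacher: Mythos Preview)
Your proposal is correct and follows essentially the same approach as the paper: both define a pre-Tukey map by collecting the values $\varphi(\langle F_n\rangle)$ over all closed-nowhere-dense decompositions, and verify the pre-Tukey condition by applying $\tilde{\varphi}$ to a $G_\delta$ presentation of a null cover of the given $N$. The only cosmetic difference is that the paper first passes to the cofinal subfamily $\mcalM_\sigma$ of $F_\sigma$ meager sets and uses exact decompositions $Z=\bigcup_n Z_n$, while you stay in $\mcalM$ and use coverings $M\subseteq\bigcup_n F_n$; your variant is marginally more direct and avoids the extra step of enlarging $\tilde{\varphi}(\langle G_n\rangle)$ to an $F_\sigma$ set.
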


\begin{proof}
 Let $\mcalM_\sigma$ be the set of all $F_\sigma$ meager subsets of $2^\omega$.
 Since $\mcalM_\sigma$ is cofinal in $\meagerideal$,
 it suffices to show that 
 $( \mcalM_\sigma, \subseteq ) \preceq_{pT} \nullideal$.
 Fix functions 
  $\varphi \colon {\cNWD}^\omega \rightarrow \mcalN$ and $\tilde{\varphi} \colon \mathrm{OPEN}^\omega \rightarrow \mcalM$ as in Fact \ref{ZFC_meager_null}.
  We define
  $\pi \colon M_\sigma \rightarrow \mcalP(\mcalN)$ by 
  \begin{center}
  $\pi(Z)=\{\varphi(\langle Z_n \mid n<\omega \rangle) \mid \langle Z_n \mid n<\omega \rangle \in \cNWD^\omega \land Z=\bigcup_{n<\omega} Z_n \}.$
  \end{center}
  We show that $\pi$ is a pre-Tukey map.
  Suppose $X \in \mcalN$ and find $Y \in M_\sigma$ such that if $\pi(Z) \cap X\downarrow_{\subseteq}$ is non-empty then $Z \subseteq Y$, for any $Z \in \mcalM_\sigma$.
  We take $\langle G_n \mid n<\omega \rangle \in \mathrm{OPEN}^\omega$ such that $X \subseteq \bigcap_{n<\omega} G_n \in \mcalN$.
  We can take $Y \in M_\sigma$ such that $\tilde{\varphi}(\langle G_n \mid n<\omega \rangle) \subseteq Y$.
  By Fact \ref{ZFC_meager_null}, $Y$ is as desired.
\end{proof}

Next, we show $(\mcalN, \subseteq)\npreceq_{pT} (\mcalM, \subseteq)$ under the assumption $(\star)$.
To prove this,
we note the fact that the two-step iteration of Cohen forcing and Hechler forcing adds a meager set covering all meager sets coded in the ground model.

 The Hechler forcing notion $(\mbbD, \leq_\mbbD)$ is defined as follows:
 \begin{center}
 $\mbbD=\{(s,f)\mid s\in\omega^{<\omega}$, $f\in\omega^\omega$ and $s\subseteq f\}$,
 \end{center}
 and $(t,g)\leq_{\mbbD} (s,f)$ if $t\supseteq s$, $g(n)\geq f(n)$ for all $n\in\omega$ 
 and $t(i)\geq f(i)$ for all $i\in |t|\setminus |s|$.

We henceforth fix a coding of elements of $2^\omega \times \omega^\omega$ by elements of $\omega^\omega$.

\begin{definition}
 We say that $x\in \omega^\omega$ is \textit{Cohen-Hechler real} over $V$ if
 $x$ codes the pair $(c,d)\in 2^\omega\times \omega^\omega$ such that $c$ is Cohen over $V$
 and $d$ is Hechler over $V[c]$. 
\end{definition}
Note that for models $W\subseteq V$ of $\ZFC$  if $x\in\omega^\omega$ is
Cohen-Hechler real over $V$ then Cohen-Hechler over $W$.

The proof of the following fact can be found in \cite{bartoszynski2005hechler}.

\begin{fact}\label{covering_meager}
  Assume $\AC$ in $V$.
 Let $c\in 2^\omega$ be a Cohen real over $V$ and let $d\in\omega^\omega$ be a Hechler real over $V[c]$.
 Then there is a meager set coded in $V[c][d]$ which covers all meager sets coded in $V$.
 Moreover, a Borel code of such a meager set can be explicitly defined from $c$ and $d$.
\end{fact}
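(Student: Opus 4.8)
The plan is to reduce the statement to a single combinatorial covering built from the standard description of meager sets by \emph{chopped reals}. Recall (Bartoszynski; see \cite{bartoszynski1995set}) that for strictly increasing $f \in \omega^\omega$ and $x \in 2^\omega$, writing $I^f_n = [f(n), f(n+1))$, the set
\[
 M_{f,x} = \{ y \in 2^\omega \mid \forall^\infty n \ \ y \restrict I^f_n \neq x \restrict I^f_n \}
\]
is meager, and every meager set is contained in some $M_{f,x}$. Since containment of Borel sets is absolute via Borel codes between models of $\ZF$, it suffices to produce, in $V[c][d]$, a \emph{single} meager set $M^*$ with $M_{f,x} \subseteq M^*$ for all strictly increasing $f \in \omega^\omega \cap V$ and all $x \in 2^\omega \cap V$. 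A Borel code of $M^*$ will then cover every meager set coded in $V$. I note that $M_{f,x} \subseteq M_{f^*, x^*}$ holds as soon as, for all but finitely many blocks $I^{f^*}_n$, there is a block $I^f_m \subseteq I^{f^*}_n$ with $x^* \restrict I^f_m = x \restrict I^f_m$; this is the \emph{engulfing} criterion I will arrange.

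For the construction I would take $x^* = c$ and build $f^*$ from $d$. Since $d$ is Hechler, hence dominating, over $V[c]$, I may assume $d$ is strictly increasing and set $f^*(0) = 0$, $f^*(n+1) = d(f^*(n))$, and put $M^* = M_{f^*, c}$. The two generics play complementary roles. The Cohen real supplies the matching pattern: because the set of $y$ agreeing with $x$ on infinitely many blocks $I^f_n$ is comeager and $(f,x) \in V$, the real $c$ agrees with $x$ on infinitely many such \emph{matching blocks}. The Hechler real supplies fast enough intervals: letting $\mu_{f,x}(t)$ be the right endpoint of the first matching block that starts at or above $t$, one has $\mu_{f,x} \in V[c]$, so $d \geq^* \mu_{f,x}$; consequently $f^*(n+1) = d(f^*(n)) \geq \mu_{f,x}(f^*(n))$ for all large $n$, which forces each block $I^{f^*}_n$ (for large $n$) to contain an entire matching block $I^f_m$ with $c \restrict I^f_m = x \restrict I^f_m$. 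This is exactly the engulfing criterion with $x^* = c$.

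To verify $M_{f,x} \subseteq M^*$, I would argue contrapositively: if $y \notin M^*$ then $y \restrict I^{f^*}_n = c \restrict I^{f^*}_n$ for infinitely many $n$; for large such $n$ the block $I^{f^*}_n$ contains a matching block $I^f_m$, and since the $I^{f^*}_n$ are disjoint these $I^f_m$ are infinitely many distinct blocks on which $y \restrict I^f_m = c \restrict I^f_m = x \restrict I^f_m$, whence $y \notin M_{f,x}$. Finally, $f^*$ is recursive in $d$ and $x^* = c$, so an $F_\sigma$ Borel code for $M^* = M_{f^*, c}$ is explicitly computable from $c$ and $d$, giving the ``moreover'' clause. \textbf{The main obstacle} is precisely the interleaving of the two reals: the matching pattern $x^* = c$ must be fixed before the partition, yet the positions of the matching blocks live in $V[c]$ and can be arbitrarily sparse, so the partition $f^*$ must outrun them. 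It is this dependence that forces $d$ to dominate over $V[c]$ rather than merely over $V$, and checking that the single recursion $f^*(n+1) = d(f^*(n))$ eventually dominates each $\mu_{f,x}$—uniformly in the recursion but separately for each ground-model pair—is the heart of the argument.
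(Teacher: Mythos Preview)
Your argument is correct and is precisely the standard chopped-real approach; the paper does not give its own proof but cites \cite{bartoszynski2005hechler}, and the commented-out draft in the source indicates the authors had exactly this $M_{\tilde d,c}$ construction in mind. In short, your proposal matches what the paper defers to the literature for.
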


\begin{lemma}\label{Cohen_Hechler}
Assume $\AC$ in $V$.
 Suppose that $y\in\omega^\omega$ and
 $z\in \omega^\omega$ is a Cohen-Hechler real over $V[y]$.
 Then $C^{V[z]} \subseteq C^{V[y]}$.
\end{lemma}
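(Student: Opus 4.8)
The plan is to reduce the statement to the covering property of Fact \ref{covering_meager}, via the characterization of Cohen reals recalled in the Preliminaries: a real $w \in 2^\omega$ lies in $C^W$ exactly when $w$ avoids every Borel meager set coded in $W$. Thus $C^{V[z]} \subseteq C^{V[y]}$ will follow once I exhibit a \emph{single} Borel meager set coded in $V[z]$ that contains every Borel meager set coded in $V[y]$.

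First I would unpack the hypothesis. Since $z$ is a Cohen--Hechler real over $V[y]$, it codes a pair $(c,d)$ with $c$ Cohen over $V[y]$ and $d$ Hechler over $V[y][c]$, and $c,d$ are recoverable from $z$, so $c,d \in V[z]$. Applying Fact \ref{covering_meager} with ground model $V[y]$ (which satisfies $\AC$) produces a Borel meager set $M$, coded in $V[y][c][d]$, that covers all meager sets coded in $V[y]$. The crucial point is that a Borel code for $M$ is \emph{explicitly definable from $c$ and $d$ alone}; since $c,d \in V[z]$, such a code lies in $V[z]$, so $M$ is coded in $V[z]$.

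With $M$ in hand, the argument finishes quickly. Take any $w \in C^{V[z]}$. As $M$ is a Borel meager set coded in $V[z]$, we get $w \notin M$. For an arbitrary Borel meager set $N$ coded in $V[y]$, the covering property gives $N \subseteq M$, whence $w \notin N$. Therefore $w$ avoids every Borel meager set coded in $V[y]$, i.e.\ $w \in C^{V[y]}$, as required.

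I expect the main subtlety to be exactly the placement of the covering set $M$ inside $V[z]$. One cannot argue by mere monotonicity of the meager ideal, because $y$ need not belong to $V[z]$ (the forcing producing $z$ adds $c$ and $d$ but not $y$), so the meager sets coded in $V[y]$ are \emph{not} automatically coded in $V[z]$. It is precisely the ``definable from $c$ and $d$'' clause of Fact \ref{covering_meager} that circumvents this, letting the code of $M$ avoid any dependence on $y$.
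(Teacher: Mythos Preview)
Your proof is correct and follows essentially the same approach as the paper: both use Fact~\ref{covering_meager} to obtain a single Borel meager set coded in $V[z]$ that covers all meager sets coded in $V[y]$, and then conclude via the characterization of Cohen reals. The only cosmetic difference is that the paper first applies Fact~\ref{covering_meager} over $V$ (using that $z$ is Cohen--Hechler over $V$ as well) to place $X$ in $V[z]$ and then notes that the same $X$ covers the meager sets of $V[y]$, whereas you apply the fact directly over $V[y]$ and invoke the ``explicitly definable from $c$ and $d$'' clause to place the code in $V[z]$; your route is arguably the cleaner of the two.
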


\begin{proof}
 Let $(c,d)\in 2^\omega\times \omega^\omega$ be the pair of the Cohen real over $V[y]$ and the Hechler real over $V[y][c]$ coded by $z$.
 Since $z$ is Cohen-Hechler real over $V$,
 by Fact \ref{covering_meager} there is a meager set $X$ coded in $V[z]$ which covers all meager sets coded in $V$.
 Since $z$ is Cohen-Hechler over $V[y]$, $X$ covers all meager sets coded in $V[y]$ too.
 If $r$ is Cohen over $V[z]$ then $r\notin X$,
 so $r$ is not contained by any meager set coded in $V[y]$.
 Thus $r$ is Cohen over $V[y]$.
\end{proof}

We can show the following lemmas in the same way as the proof of 
Lemma \ref{Cohen_cofinal} and Lemma \ref{d_cofM_equiv}.
Let $\hat{\mcalN}$ be the conull filter over $2^\omega$.

\begin{lemma}\label{random_cofinal}
  Assume ($\star$).
   Let $x \in \omega^\omega$ and $B \subseteq 2^\omega$ be a conull set.
   Suppose that $(S, \varphi)$ is a Solovay code for $B$.
   Then $R^{L[S]} \subseteq B$.
 \end{lemma}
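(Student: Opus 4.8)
The plan is to transcribe the proof of Lemma~\ref{Cohen_cofinal} into the measure-theoretic setting, replacing the Cohen forcing $\mbbC$ by the random forcing $\mbbB$, the notion ``comeager'' by ``conull'', and the category dichotomy (meager versus comeager) by the measure dichotomy (null versus positive measure). I would argue by contradiction: assuming $R^{L[S]} \not\subseteq B$, fix a random real $r \in R^{L[S]} \setminus B$ over $L[S]$, and derive a contradiction with the hypothesis that $B$ is conull.

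First I would recover from $r$ a $\mbbB$-generic filter $g$ over $L[S]$ with $\{r\} = \bigcap g$, so that $r = \dot{r}[g]$ for the canonical $\mbbB$-name $\dot{r}$ of the random real. Since $(S,\varphi)$ is a Solovay code for $B$, we have $r \notin B \Leftrightarrow L[S][r] \models \lnot\varphi(S,r)$, and by the forcing theorem this is in turn equivalent to the existence of some $p \in g$ with $L[S] \models p \Vdash_{\mbbB} \lnot\varphi(S,\dot{r})$; here $p$ is a positive-measure Borel set coded in $L[S]$. I would then define $X$ to be the set of all $\dot{r}[h]$ where $h$ ranges over $\mbbB$-generic filters over $L[S]$ with $p \in h$, exactly paralleling the set $X$ in the Cohen argument.

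The contradiction comes from computing the measure of $X$ in two ways. On the one hand, every $y \in X$ satisfies $L[S][y] \models \lnot\varphi(S,y)$ (because $p \Vdash_{\mbbB} \lnot\varphi(S,\dot{r})$ and $p \in h$), so $y \notin B$ by the Solovay code; thus $X \cap B = \emptyset$, and $X$ is null since $B$ is conull. On the other hand, $X$ contains every random real over $L[S]$ lying in the Borel set $p$: for such a $y$ the filter $h = \{\, q \in \mbbB^{L[S]} \mid y \in q \,\}$ is $\mbbB$-generic over $L[S]$, satisfies $p \in h$ (as $y \in p$), and has $\dot{r}[h] = y$.

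The hard part, just as in the Cohen case, is then to see that $X$ is forced to have positive measure, contradicting the previous paragraph. The key input is that $(2^\omega)^{L[S]}$ is countable: by clause~(1) of $(\star)$, $\omega_1$ is inaccessible (in particular a limit cardinal) in $L[S]$, so $\omega_1^{L[S]} < \omega_1$; since $L[S] \models \CH$, the set $(2^\omega)^{L[S]}$ has cardinality $\omega_1^{L[S]}$ in $L[S]$ and is therefore countable in $V$. Hence there are only countably many null Borel sets coded in $L[S]$, their union is null, and the elements of $p$ avoiding all of them—precisely the random reals over $L[S]$ inside $p$—form a conull subset of $p$. Since $p$ has positive measure, this subset, and a fortiori $X$, has positive measure, which is the desired contradiction.
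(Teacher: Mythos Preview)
Your proposal is correct and follows exactly the approach the paper intends: the paper does not spell out a separate proof of this lemma but simply states that it is proved ``in the same way as the proof of Lemma~\ref{Cohen_cofinal}'', and your argument is precisely that transcription, replacing Cohen forcing by random forcing and category by measure. Your explicit justification that $(2^\omega)^{L[S]}$ is countable (via clause~(1) of $(\star)$ and $\CH$ in $L[S]$) is a detail the paper leaves implicit, but it is correct and fits the intended proof.
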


 \begin{lemma}\label{random_cofinal_M}
  Assume ($\star$).
  Then $\{R^{L[x]}\mid x\in\omega^\omega\}$ is cofinal in $(\hat{\mcalN}, \supseteq)$.
 \end{lemma}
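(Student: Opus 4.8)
The plan is to mimic the proof of Lemma~\ref{Cohen_cofinal_M} essentially verbatim, replacing the comeager filter $\hat{\mcalM}$ and Cohen reals by the conull filter $\hat{\mcalN}$ and random reals, and invoking Lemma~\ref{random_cofinal} in place of Lemma~\ref{Cohen_cofinal}. Since the order on $\hat{\mcalN}$ is reverse inclusion, proving cofinality of $\{R^{L[x]}\mid x\in\omega^\omega\}$ amounts to showing that for every conull $X\in\hat{\mcalN}$ there is some $x\in\omega^\omega$ with $R^{L[x]}\subseteq X$.

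First I would pass from an arbitrary conull $X$ to a Borel witness inside it. Since $2^\omega\setminus X$ is null, it is contained in a Borel null set, and the complement of that Borel null set is a Borel conull set $Y\subseteq X$; fix a Borel code $a\in\omega^\omega$ of $Y$. Next, by the absoluteness of Borel codes between models of $\ZF+\DC$ (the same step used in Lemma~\ref{Cohen_cofinal_M}), I would produce a formula $\varphi$ for which $(a,\varphi)$ is a Solovay code for $Y$. Applying Lemma~\ref{random_cofinal} to the conull set $Y$ with this Solovay code yields $R^{L[a]}\subseteq Y\subseteq X$, so $R^{L[a]}$ lies $\supseteq$-above $X$, as required for cofinality.

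The only point deserving a separate check—and the only use of $(\star)$ beyond its role inside Lemma~\ref{random_cofinal}—is that each $R^{L[x]}$ genuinely belongs to $\hat{\mcalN}$, i.e.\ is conull. For this I would note that clause~(1) of $(\star)$ makes $\omega_1$ inaccessible in $L[x]$, whence $\aleph_1^{L[x]}<\omega_1^V$ and so $(2^\omega)^{L[x]}$ is countable in $V$; consequently there are only countably many Borel null sets coded in $L[x]$, their union is null, and its complement $R^{L[x]}$ is conull. I expect no genuine obstacle: the argument is a word-for-word transfer of Lemma~\ref{Cohen_cofinal_M}, and the two measure-theoretic facts it relies on—that every null set sits inside a Borel null set, and that a countable union of null sets is null—are the exact measure-theoretic duals of the category facts already used there.
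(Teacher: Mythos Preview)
Your proposal is correct and follows essentially the same approach as the paper, which explicitly states that Lemma~\ref{random_cofinal_M} is proved in the same way as Lemma~\ref{Cohen_cofinal_M} (pass to a Borel conull $Y\subseteq X$, take a Borel code $a$, obtain a Solovay code by absoluteness, and apply Lemma~\ref{random_cofinal}). Your additional verification that each $R^{L[x]}$ is actually conull---using clause~(1) of $(\star)$ to make $(2^\omega)^{L[x]}$ countable---is a detail the paper leaves implicit in the Cohen case as well, so your write-up is if anything slightly more complete.
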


\begin{lemma}\label{cofM_cofN_equiv}
 Assume $(\star)$.
 Suppose that for any set $S$  of ordinals,
 there exists $x\in\omega^\omega$ such that for every $y\in\omega^\omega$ 
 there exists $z\in\omega^\omega$ with $C^{L[z]}\subseteq C^{L[y]}$ and $R^{L[S][z]}\nsubseteq R^{L[S][x]}$.
 Then $(\mcalN, \subseteq)\npreceq_{pT} (\mcalM, \subseteq)$.
\end{lemma}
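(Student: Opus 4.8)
The plan is to prove the contrapositive, following the template of Lemma \ref{d_cofM_equiv} but now parametrising comeager sets by reals through the cofinal family $\{C^{L[v]}\mid v\in\omega^\omega\}$. Thus I would assume $(\mcalN,\subseteq)\preceq_{pT}(\mcalM,\subseteq)$ and produce a single set $S$ of ordinals witnessing the failure of the hypothesis, namely one for which: given any $x$ there is $y$ such that, for all $z$, $C^{L[z]}\subseteq C^{L[y]}$ implies $R^{L[S][z]}\subseteq R^{L[S][x]}$.

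First, by Lemma \ref{Tukey_map} the assumption yields a pre-convergent map from $(\mcalM,\subseteq)$ to $(\mcalN,\subseteq)$, and transporting it along the complementation isomorphisms $(\mcalM,\subseteq)\cong\comeagerfilter$ and $(\mcalN,\subseteq)\cong\conullfilter$ gives a pre-convergent map $\sigma$ from $\comeagerfilter$ to $\conullfilter$. Setting $f(A)=\bigcup\sigma(A)$ for each comeager $A$, the same computation as in Lemma \ref{d_cofM_equiv} shows $f$ is a convergent map from $\comeagerfilter$ to $\conullfilter$; unwinding the reverse-inclusion orders, this says concretely that for every conull $N$ there is a comeager $A_0$ with $f(A)\subseteq N$ for every comeager $A\subseteq A_0$.

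The key move is to turn $f$ into a relation on reals. Since $\{C^{L[v]}\mid v\in\omega^\omega\}$ is cofinal in $\comeagerfilter$ by Lemma \ref{Cohen_cofinal_M}, I would consider the set $X=\{(u,v)\in 2^\omega\times\omega^\omega\mid u\in f(C^{L[v]})\}$, code it as a subset of $\omega^\omega$, and apply $(\star)$ to obtain a set $S$ of ordinals and a formula $\varphi$ with $u\in f(C^{L[v]})\Leftrightarrow L[S][u][v]\models\varphi(S,u,v)$ for all $u,v$. Then, fixing $v=z$ and letting $S'$ code $(S,z)$ so that $L[S']=L[S][z]$, the pair $(S',\varphi')$ is a Solovay code for the conull set $f(C^{L[z]})$, so Lemma \ref{random_cofinal} gives $R^{L[S][z]}\subseteq f(C^{L[z]})$ for every $z$.

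With this $S$ fixed, given $x$ the set $R^{L[S][x]}$ is conull (by $(\star)(1)$, $(\omega^\omega)^{L[S][x]}$ is countable), so convergence of $f$ applied to $N=R^{L[S][x]}$ yields a comeager $A_0$, and Lemma \ref{Cohen_cofinal_M} gives $y$ with $C^{L[y]}\subseteq A_0$. Then any $z$ with $C^{L[z]}\subseteq C^{L[y]}$ satisfies $C^{L[z]}\subseteq A_0$, hence $f(C^{L[z]})\subseteq R^{L[S][x]}$, and combining with the previous paragraph $R^{L[S][z]}\subseteq f(C^{L[z]})\subseteq R^{L[S][x]}$, as required. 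I expect the main obstacle to be precisely the parametrisation step: confirming that $X$ is genuinely a set of reals to which $(\star)$ applies, and that the Solovay code for the slice $f(C^{L[z]})$ is correctly obtained by absorbing $z$ into the parameter, so that Lemma \ref{random_cofinal} applies to it. The order-direction bookkeeping in $\comeagerfilter$ and $\conullfilter$ (where the relation is reverse inclusion) needs care but is routine.
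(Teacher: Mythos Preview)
Your proposal is correct and follows essentially the same approach as the paper. The only cosmetic difference is that the paper restricts to the cofinal family $\{C^{L[v]}\mid v\in\omega^\omega\}$ \emph{before} defining $f$ (so convergence of $f$ directly produces the witness $y$), whereas you define $f$ on all of $\hat{\mcalM}$ and then invoke Lemma~\ref{Cohen_cofinal_M} afterwards to pass from the comeager $A_0$ to some $C^{L[y]}\subseteq A_0$; both routes are straightforward and yield the same $S$.
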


\begin{proof}
  To prove the contrapositive, assume that $(\mcalN, \subseteq)\preceq_{pT}(\mcalM, \subseteq)$.
  By Lemma \ref{Cohen_cofinal_M} we obtain a pre-convergent map $\sigma$
  from $(\{C^{L[x]}\mid x\in\omega^\omega\}, \supseteq)$ to $(\hat{\mcalN}, \supseteq)$.
  We define $f(C^{L[x]})=\bigcup \sigma(C^{L[x]})$ for each $x\in\omega^\omega$, 
  then $f$ is a convergent map from $(\{C^{L[x]}\mid x\in\omega^\omega\}, \supseteq)$ to $(\hat{\mcalN}, \supseteq)$.
  By $(\star)$ we can take 
   a set $S$ of ordinals and formula $\varphi$ such that 
   \begin{center}
    $u\in f(C^{L[v]}) \Leftrightarrow L[S][u][v]\models \varphi(S,u,v)$
   \end{center}
   for all $u,v\in \omega^\omega$.

   We suppose $x\in\omega^\omega$ and find $y\in\omega^\omega$ such that for every $z\in\omega^\omega$ if $C^{L[z]}\subseteq C^{L[y]}$ then  $R^{L[S][z]} \subseteq R^{L[S][x]}$.
   Since $f$ is convergent and $R^{L[S][x]} \in \hat{\mcalN}$,
   there is $y\in \omega^\omega$ such that 
   $f(C^{L[z]}) \subseteq R^{L[S][x]}$ for any $z\in\omega^\omega$ with $C^{L[z]} \subseteq C^{L[y]}$.
   By Lemma \ref{random_cofinal}, $R^{L[S][z]} \subseteq f(C^{L[z]})$.
   Thus if $C^{L[z]} \subseteq C^{L[y]}$ then 
   $R^{L[S][z]}\subseteq R^{L[S, x]}$.
\end{proof}

\begin{theorem}
 Assume $(\star)$. Then $(\mcalN, \subseteq) \npreceq_{pT} (\mcalM, \subseteq)$.
\end{theorem}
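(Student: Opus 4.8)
The plan is to verify the combinatorial hypothesis of Lemma \ref{cofM_cofN_equiv}, after which the conclusion $(\mcalN, \subseteq) \npreceq_{pT} (\mcalM, \subseteq)$ is immediate. So fix a set $S$ of ordinals. By $(\star)$(1), $\omega_1$ is inaccessible in $L[S]$; in particular $\omega_1^V$ is a strong limit in $L[S]$, so $(2^{2^\omega})^{L[S]}$ is an ordinal below $\omega_1^V$ and hence countable in $V$. I fix $x \in \omega^\omega$ with $(2^{2^\omega})^{L[S]}$ countable in $L[S][x]$, exactly the choice made in the dominating-versus-meager argument. Given $y \in \omega^\omega$, I must then produce $z \in \omega^\omega$ with $C^{L[z]} \subseteq C^{L[y]}$ and $R^{L[S][z]} \not\subseteq R^{L[S][x]}$.

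The single real $z$ will be a Cohen-Hechler real over $L[S][x][y]$. Such a $z$ exists in $V$: applying $(\star)$(1) to a set coding $S$, $x$ and $y$ shows $(2^\omega)^{L[S][x][y]}$ is countable in $V$, so the dense subsets of $\CD$ lying in $L[S][x][y]$, and then of the Hechler forcing in the one-step Cohen extension, are countable in $V$, and a generic filter can be built by meeting them one at a time. Since $L[y]$ and $L[S][x]$ are both contained in $L[S][x][y]$, the remark following the definition of Cohen-Hechler reals gives that $z$ is Cohen-Hechler over each of them. Being Cohen-Hechler over $L[y]$, Lemma \ref{Cohen_Hechler} yields $C^{L[z]} \subseteq C^{L[y]}$, the first requirement.

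It remains to produce a witness $w \in R^{L[S][z]} \setminus R^{L[S][x]}$. Since $z$ is generic over $L[S]$ for the ccc (indeed $\sigma$-centered) forcing $\CD$, counting nice names gives $(2^\omega)^{L[S][z]} \leq (2^{2^\omega})^{L[S]}$, which is countable in $L[S][x]$ and hence in $L[S][x][z]$. Thus the null Borel sets coded in $L[S][z]$ are countable in $L[S][x][z]$, and diagonalizing against them produces $w \in L[S][x][z]$ lying outside all of them, i.e.\ $w \in R^{L[S][z]}$. On the other hand $z$ is Cohen-Hechler over $L[S][x]$, so $L[S][x][z]$ is a $\sigma$-centered forcing extension of $L[S][x]$; since $\sigma$-centered forcing adds no random real, no real of $L[S][x][z]$ is random over $L[S][x]$, and in particular $w \notin R^{L[S][x]}$. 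Hence $R^{L[S][z]} \not\subseteq R^{L[S][x]}$, and $z$ is as required.

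The main obstacle is the demand that one and the same $z$ be generic over both $L[y]$, needed to control Cohen reals through Lemma \ref{Cohen_Hechler}, and over $L[S][x]$, needed so that the extension $L[S][x][z]$ adds no random real. I resolve this by taking $z$ generic over the common model $L[S][x][y]$, whose reals are countable in $V$ by $(\star)$; this plays the role that the ``generic condition'' fusion for Laver forcing played in the dominating-versus-meager argument, but is simpler because $\CD$ is ccc. The only further ingredient to record is the classical fact that $\sigma$-centered forcing (here $\CD$) adds no random real, which is precisely what converts ``$w$ lives in the $\sigma$-centered extension of $L[S][x]$'' into ``$w$ is not random over $L[S][x]$.''
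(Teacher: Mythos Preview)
Your proof is correct and follows essentially the same approach as the paper: reduce to the hypothesis of Lemma~\ref{cofM_cofN_equiv}, choose $x$ so that $(2^{2^\omega})^{L[S]}$ is countable in $L[S][x]$, take $z$ Cohen--Hechler over $L[S][x][y]$, use Lemma~\ref{Cohen_Hechler} for $C^{L[z]}\subseteq C^{L[y]}$, and then argue via ccc of $\CD$ that $(2^\omega)^{L[S][z]}$ is countable in $L[S][x][z]$ to find a $w$ random over $L[S][z]$ but, by $\sigma$-centeredness of $\CD$, not random over $L[S][x]$. Your added justification that such a $z$ actually exists in $V$ (using $(\star)$(1) for a set coding $S,x,y$) is a detail the paper leaves implicit.
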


\begin{proof}
  Suppose $S$ is a set of ordinals.
 By Lemma \ref{cofM_cofN_equiv}, it suffices to find
 $x\in \omega^\omega$ such that for all $y\in \omega^\omega$ there is $z\in \omega^\omega$ with $C^{L[z]} \subseteq C^{L[y]}$ and $R^{L[S][z]} \nsubseteq R^{L[S][x]}$.

  We can take $x\in\omega^\omega$ such that $(2^{2^\omega})^{L[S]}$ is countable in $L[S][x]$.
  We suppose $y\in\omega^\omega$ and find $z\in\omega^\omega$ such that $C^{L[z]} \subseteq C^{L[y]}$ and $R^{L[S][z]} \nsubseteq R^{L[S][x]}$.
   Let $(c,d)$ be a $\mathbb{C}\ast\dot{\mathbb{D}}$-generic real over $L[S][x][y]$,
   and let $z\in\omega^\omega$ be a Cohen-Hechler real coding $(c,d)$.
   Note that $z$ is Cohen-Hechler over $L[y]$ too.
   In particular $C^{L[z]} \subseteq C^{L[y]}$ by Lemma \ref{Cohen_Hechler}.
   We show that $R^{L[S][z]} \nsubseteq R^{L[S][x]}$.

   In $L[S][z]$ let $\Gamma$ be the collection of Borel codes of null subsets of $2^\omega$.
   Since $\CD$ is c.c.c, $(2^\omega)^{L[S][z]}$ is countable in $L[S][x][z]$.
   Thus $\Gamma$ is countable in $L[S][x][z]$, so $B=\bigcup_{c\in\Gamma} B_c^{L[S][x][z]}$ is null in $L[S][x][z]$.
   Hence we can take $w\in (\omega^\omega)^{L[S][x][z]}\setminus B$. Then $w$ is random over $L[S][z]$.
 
   On the other hand, $w$ is in $L[S][x][z]$, a $\CD$ extension of $L[S][x]$.
   Since $\CD$ is $\sigma$-centered and  $\sigma$-centered forcing does not add
   random reals, $w$ is not random over $L[S][x]$.
\end{proof}

 \subsection{$(\omega_1 , \leq)$}
 In this subsection, we show that $\dominating \npreceq_{pT} \omegaone$ and $\omegaone \npreceq_{pT} \nullideal$.
 From these results and the results in the previous subsections, it follows that 
 $\omegaone \npreceq_{pT} (D, \leq_D)$ and $(D, \leq_D) \npreceq_{pT} \omegaone$ for any $(D, \leq_D)= \dominating, \meagerideal, \nullideal$.
 \begin{theorem}
  Assume $(\star)$.
  Then $\dominating \npreceq_{pT} \omegaone$.
 \end{theorem}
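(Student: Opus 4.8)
The plan is to argue by contraposition: I reduce the pre-Tukey relation to an ordinary Tukey map, then to a covering statement about $\leq^*$-bounded sets, which I expect to contradict $(\star)$. The first observation is that $\omegaone$ is already complete: every nonempty $\leq$-upward closed subset of $\omega_1$ is a tail $[\alpha,\omega_1)$, so by Definition \ref{completion} we have $(\omega_1,\leq)^*\cong(\omega_1,\leq)$. Hence, by Definition \ref{Tukey_completion}, $\dominating\preceq_{pT}\omegaone$ is equivalent to the existence of an ordinary Tukey map $f\colon\omega^\omega\to\omega_1$. Assuming such an $f$ for contradiction, the Tukey condition says precisely that each initial preimage $B_\xi:=\{g\in\omega^\omega\mid f(g)\leq\xi\}$ is $\leq^*$-bounded; these increase with $\xi$ and their union is $\omega^\omega$. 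Each $B_\xi$ is thus contained in some $\{g\mid g\leq^* b\}$, hence is meager, so $\omega^\omega$ would be covered by $\omega_1$-many $\leq^*$-bounded meager sets. The goal is to refute this under $(\star)$.

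Next I would code and localize. By clause $(2)$ of $(\star)$ I fix a set $S$ of ordinals and a formula making the relation $A=\{(g,y)\mid y\in\mathrm{WO}\wedge f(g)\leq o(y)\}$ into an $\infty$-Borel set with code $S$; by clause $(1)$, $\omega_1$ is inaccessible in every $L[S][x]$, so $(\omega^\omega)^{L[S][x]}$ is countable and the Cohen-real machinery of Lemma \ref{Cohen_cofinal} is available over each $L[S][x]$. For $\xi<\omega_1^{L[S]}$, a real $y\in L[S]$ codes $\xi$, the complement $\omega^\omega\setminus B_\xi$ is comeager with a Solovay code built from $S$ and $y$, and Lemma \ref{Cohen_cofinal} gives $C^{L[S]}\subseteq\omega^\omega\setminus B_\xi$. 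Thus every Cohen real $c$ over $L[S]$ satisfies $f(c)\geq\omega_1^{L[S]}$, and the same argument relative to each $L[S][x]$ shows that every Cohen real over $L[S][x]$ has $f$-value at least $\omega_1^{L[S][x]}$. Since $\{\omega_1^{L[S][x]}\mid x\in\omega^\omega\}$ is cofinal in $\omega_1$ while each term is a genuinely countable ordinal by clause $(1)$, this records exactly how $f$ is forced upward on comeager sets.

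The main step is to convert this control \emph{at countable heights} into a contradiction \emph{at height} $\omega_1$, and this is where I expect the real difficulty: no single generic real, and no $\omega$-length iteration, can be assigned $f$-value $\omega_1$, so the genuinely $\omega_1$-indexed cover cannot be defeated by one application of a countable forcing over $L[S]$. I would therefore prove the covering fact head-on: under $(\star)$, $\omega^\omega$ is not a union of $\omega_1$-many $\leq^*$-bounded sets. The plan is to take (via a single countable-forcing generic, which exists since $(\omega^\omega)^{L[S]}$ is countable) a perfect family $\{e(t)\mid t\in 2^\omega\}$ of mutually Cohen-generic, $\leq^*$-unbounded reals over $L[S]$, arranged so that $\{e(t)\mid t\in U\}$ is $\leq^*$-unbounded for \emph{every} non-meager $U\subseteq 2^\omega$. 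Then $t\mapsto f(e(t))$ is an $\infty$-Borel function $2^\omega\to\omega_1$, and I would use the Baire-property regularity furnished by $(\star)$ to see it is bounded by some $\gamma<\omega_1$ on a comeager set $U$. This places the $\leq^*$-unbounded set $e[U]$ inside the bounded set $B_\gamma$, the desired contradiction.

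The crux — the step I expect to be hardest — is establishing that such a function must be bounded on a comeager set; equivalently, that clause $(1)$ (``$\omega_1$ is inaccessible in every $L[S]$'') forbids an $\infty$-Borel $\omega_1$-cover of $\omega^\omega$ by meager sets. This is essentially the choiceless analogue of $\cof$-type inequalities of the form $\mathrm{cov}(\mcalM)>\omega_1$, and it is precisely where the inaccessibility of $\omega_1$ to reals must be used in full strength, presumably through a reflection argument inside the Lévy-collapse presentation underlying Fact \ref{fact_Solovay_LR}. As a fallback finish I would keep in reserve the strategy of manufacturing from $f$ a definable injection of $\omega_1$ into $\omega^\omega$ and contradicting the perfect set property that $(\star)$ also yields; the obstacle there, to be handled with the $\infty$-Borel code, is making the choice of witnesses uniform across all $\omega_1$ levels without invoking $\AC$.
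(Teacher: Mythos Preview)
Your reduction to an ordinary Tukey map $f\colon\omega^\omega\to\omega_1$ and the observation that each $B_\xi=f^{-1}[0,\xi]$ must be $\leq^*$-bounded are correct and match the paper's setup. But the proposal has a genuine gap: the step you yourself flag as the ``crux''---that under $(\star)$ every $\infty$-Borel map $2^\omega\to\omega_1$ is bounded on a comeager set---is the entire content of the theorem, and you do not prove it. Gesturing at reflection in the L\'evy collapse or at the perfect set property does not close the gap; neither proposed endgame (comeager boundedness via a perfect Cohen family, or manufacturing an injection $\omega_1\hookrightarrow\omega^\omega$) comes with an argument, and the second one runs straight into the uniform-choice problem you note.

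The paper bypasses the general boundedness principle entirely with a short, direct forcing argument, and this is the idea you are missing. Take the $\infty$-Borel code $S$ for the relation $X_f=\{(x,z):z\in\mathrm{WO}\wedge f(x)=o(z)\}$, pick one Cohen real $c$ over $L[S]$, and set $\alpha=f(c)$. The point is that the statement ``$f(\dot c)=\alpha$'' is decided in $L[S]$ by some Cohen condition $p\in\mbbC$ together with a condition $q\in\Coll(\omega,\alpha)$ (the collapse produces a real $z$ with $o(z)=\alpha$ so that the Solovay code can be evaluated). Now for \emph{any} $x\in\omega^\omega$, take $y$ Cohen over $L[S][x]$ extending $p$, and then force with $\Coll(\omega,\alpha)$ below $q$ over $L[S][x][y]$: the same $\infty$-Borel formula is satisfied, so $f(y)=\alpha$. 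Since $y$ is Cohen over $L[S][x]$, it is not eventually strictly dominated by $x$. Hence the single fiber $f^{-1}(\alpha)$ is already $<^*$-unbounded, and $f$ is not Tukey. (The paper passes to $(\omega^\omega,<^*)$ first so that ``agrees with $x$ infinitely often'' suffices.) Note that this forcing-absoluteness trick---a condition decided once in $L[S]$ continues to force the same value over every $L[S][x]$---is precisely what would prove your crux step as well, so the detour through a perfect family and Baire category is unnecessary.
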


 \begin{proof}
  For $f, g\in \omega^\omega$, we write $f <^* g$ if $f(n) < g(n)$ for all but finitely many $n\in\omega$.
  Since $(\omega^\omega, <^*)$ is Tukey reducible to $\dominating$ and $\omegaone^*$ is isomorphic to $\omegaone$,
  it suffices to show that 
  $(\omega^\omega, <^*) \npreceq_{T} \omegaone$.
  We suppose $f$ is a function from $\omega^\omega$ to $\omega_1$ and show that $f$ is not a Tukey map.
  Let
  \begin{center}
     $X_f=\{(x,z)\in\omega^\omega\times \omega^\omega \mid 
     z \in \WO \land f(x)=o(z)\}$.
  \end{center}
  By $(\star)$ we can take a set $S$ of ordinals  and a formula $\varphi$ such that
  \begin{center}
    $(x,z) \in X_f \Leftrightarrow
    L[S][x][z] \models \varphi(S, x, z)$.
  \end{center}
  Let $c\in\omega^\omega$ be a Cohen real over $L[S]$ and let $\alpha = f(c)$.
  We show that for all $x\in\omega^\omega$ there exists $y\in\omega^\omega$ such that $f(y)=\alpha$ and $y \nless^* x$.

  Suppose $x\in\omega^\omega$.
  Let $\dot{z}$ be a $\collalpha$-name for a real and let $h$ be a $\collalpha$-generic filter over $L[S][x]$ such that $\dot{z}[h] \in \WO$ and $o(\dot{z}[h])=\alpha$.
  Then $\varphi(S, c, \dot{z}[h])$ holds in $L[S, c, \dot{z}[h]]$.
  We can take $p\in\mbbC$ and $q\in \collalpha$ such that 
  \begin{center}
    $L[S] \models p \Vdash_{\mbbC} q \Vdash_{\collalpha} \varphi (S, \dot{c}, \dot{z})$
  \end{center}
  where $\dot{c}$ is the canonical $\mbbC$-name for a generic real.
  We take a $\mbbC$-generic filter $g$ over $L[S][x]$ with $p \in g$ and let $y=\dot{c}_g$.
  We take a $\collalpha$-generic filter $h'$ over $L[S][x][y]$ with $q \in h'$ and
  let $z' = \dot{z}[h']$. Then $\varphi(S, y, z')$ holds in $L[S][y][z']$. Thus $f(y)=\alpha$.
  Since $y$ is a Cohen real over $L[S][x]$ and each Cohen real takes the same value infinitely often as any real in the ground model, 
  $y \nless^* x$ holds.
 \end{proof}

 Next, to prove $\omegaone \npreceq_{pT} \nullideal$
 we review basics on the amoeba forcing, a forcing which adds a null set covering all null sets coded in the ground model.

 The amoeba forcing notion is 
 \begin{center}
  $\mbbA=\{U \subseteq 2^\omega \mid U$ is open and $\mu(U)<1/2\}$
 \end{center}
 ordered by reverse inclusion.
 \begin{fact}\label{fact_amoeba_covering}
  Assume $\AC$ in $V$.
  Let $G$ be an $\mbbA$-generic filter over $V$.
  Then $G$ induces a Borel code of $G_\delta$ null set which covers all null sets coded in $V$.
  In particular, $R^{V[G]} \subseteq R^V$.
 \end{fact}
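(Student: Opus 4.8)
The plan is to work in $V$, where $\AC$ holds, fix an $\mbbA$-generic filter $G$ over $V$, and analyse the open set $U_G = \bigcup G$. First I would record that $U_G$ is open with $\mu(U_G) \le 1/2$ (in fact equality holds by genericity, since for each rational $r < 1/2$ the conditions of measure $> r$ are dense, but only the bound $\le 1/2$ is needed below). The two things to establish are: (a) $U_G$ contains the reinterpretation $B_c^{V[G]}$ of every Borel null set $B_c$ with code $c \in V$; and (b) the set of reals of $V[G]$ lying in some $V$-coded null set is actually \emph{null} in $V[G]$, which upgrades (a) to the desired $G_\delta$ null covering set.

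For (a), fix $c \in V$ coding a Borel null set $B_c$. The set
\[ D_c = \{\, U \in \mbbA \mid \exists\, W \in V \ (W \text{ open},\ B_c \subseteq W,\ W \subseteq U)\,\} \]
is dense: given $U_0 \in \mbbA$, since $\mu(B_c)=0$ we may choose in $V$ an open $W \supseteq B_c$ with $\mu(W) < 1/2 - \mu(U_0)$, and then $U_0 \cup W \in \mbbA$ extends $U_0$ and lies in $D_c$. Hence some $U \in G$ contains such a $W$. Because ``$B_c \subseteq W$'' is a Borel, hence absolute, statement and $W, U \in V$ are open with $W \subseteq U$, we obtain $B_c^{V[G]} \subseteq W^{V[G]} \subseteq U \subseteq U_G$ in $V[G]$. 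Thus $U_G$ covers every $V$-coded null set.

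The crux is (b). Let $N = \bigcup\{\, B_c^{V[G]} \mid c \in V,\ B_c \text{ null}\,\}$ be the set of reals of $V[G]$ that fail to be random over $V$; by (a), $N \subseteq U_G$, so $\mu^*(N) \le 1/2$. The key observation is that $N$ is invariant under translation by every ground-model real: for $g \in (2^\omega)^V$ the map $x \mapsto g \oplus x$ is a measure-preserving Borel automorphism coded in $V$, so it carries each $V$-coded null set to another one, whence $g \oplus N = N$. Since $(2^\omega)^V$ is a \emph{dense} subgroup of $(2^\omega, \oplus)$, a measurable hull $A \supseteq N$ with $\mu(A) = \mu^*(N)$ is invariant mod null under a countable dense group of translations (using uniqueness of hulls mod null); by ergodicity of such an action — equivalently a density-point zero--one argument — $\mu(A) \in \{0,1\}$, and $\mu(A) \le 1/2$ forces $\mu(A) = 0$. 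Hence $N$ is null in $V[G]$. I would then take a $G_\delta$ set $N^* \supseteq N$ with $\mu(N^*) = 0$; its Borel code lies in $V[G]$ and can be obtained canonically (e.g. as a wellorder-least sequence of open covers of $N$ of measure $< 2^{-k}$), and by construction $N^*$ covers $B_c^{V[G]}$ for every null code $c \in V$. Finally $R^{V[G]} \subseteq R^V$ is immediate: any real avoiding all $V[G]$-coded null sets avoids $N^*$, hence every $V$-coded null set.

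The main obstacle I expect is exactly step (b): the easy density argument only confines the non-random reals to an open set of measure $1/2$, and converting this into a genuinely null covering set requires the translation-invariance of $N$ together with the zero--one law for the dense subgroup $(2^\omega)^V$. A secondary point to handle with care is the absoluteness bookkeeping in (a), namely distinguishing $B_c^V$ from its reinterpretation $B_c^{V[G]}$ and checking that it is the latter that $U_G$ covers.
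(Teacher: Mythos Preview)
The paper does not actually prove this statement: it is recorded as a \emph{Fact} and left to the literature (Bartoszy\'nski--Judah), so there is no in-paper argument to compare against. Your proposal is a correct proof. Part (a) is the standard density computation, and your step (b) is valid: $N$ is invariant under the countable dense subgroup $\Gamma$ of finitely supported elements of $2^\omega$ (all of which lie in $V$), a measurable hull of $N$ is then $\Gamma$-invariant modulo null, and the zero--one law for $\Gamma$-invariant measurable sets forces its measure to be $0$ since it is bounded by $\mu(U_G)\le 1/2$.

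One simplification, which is closer to the textbook argument and also addresses your worry about canonicity: you can skip the hull entirely and set
\[
N^{*} \;=\; \bigcap_{q\in\Gamma}\,(q + U_G).
\]
This is explicitly a $G_\delta$ set defined from $G$. It contains $N$, since for $x$ in any $V$-coded null set $B$ and any $q\in\Gamma$ one has $q+x\in q+B\subseteq U_G$ by (a); it is $\Gamma$-invariant because $\Gamma$ is a group; and it has measure at most $\mu(U_G)\le 1/2$, so the same zero--one law gives $\mu(N^{*})=0$ directly. This yields the phrase ``$G$ induces a Borel code'' without any appeal to a wellorder-least choice.
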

 Suppose  $W$ is an inner model of $\ZFC$.
 $a\in\omega^\omega$ is called an \textit{amoeba real} over $W$ if $a$ is a Borel code of a null set as in Fact \ref{fact_amoeba_covering} induced by an $\mbbA$-generic filter over $W$.
 Just as the meager ideal corresponds to Cohen forcing and the null ideal corresponds to random forcing, there is a $\sigma$-ideal associated with  amoeba forcing. 
 See 3.4.B in \cite{bartoszynski1995set} for the definition of the ideal and basics.
 From this characterization, we can obtain the following.
 \begin{fact}\label{fact_amoeba_generic}
  Suppose $W \subseteq V$ are transitive models of $\ZFC$.
  Let $G$ be an $\mbbA$-generic filter over $V$ and
  let $a\in\omega^\omega$ be an amoeba real induced by $G$. 
  Then there is an $\mbbA$-generic filter $G'$ over $W$ such that
  \begin{enumerate}
    \item $G'$ also induces $a$. Thus $a$ is an amoeba real also over $W$.
    \item For any $p \in \mbbA^W$ there is $q \in \mbbA^V$ such that $p \in G'$ if and only if $q\in G$.
  \end{enumerate}
 \end{fact}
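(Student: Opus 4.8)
The plan is to run the entire argument through the $\sigma$-ideal $\mcalI$ associated with $\mbbA$ as in \cite[3.4.B]{bartoszynski1995set}, in exact parallel with the treatment of random forcing and the null ideal. I would use three features of $\mcalI$, all of which are among the basics established there and all of which are absolute between transitive models of $\ZFC$ because they are phrased in terms of Borel codes, open sets, and their measures: first, there is an absolute assignment $p \mapsto B_p$ sending each condition $p \in \mbbA$ to a Borel $\mcalI$-positive set, under which $\mbbA$ is forcing-equivalent to the poset of $\mcalI$-positive Borel sets ordered by inclusion; second, the predicate ``the Borel set coded by $c$ lies in $\mcalI$'' is absolute between transitive models of $\ZFC$; and third, a real $a$ is an amoeba real over a transitive model $M \models \ZFC$ if and only if $a$ avoids every Borel $\mcalI$-set coded in $M$, in which case the induced generic filter is recovered as $\{\, p \in \mbbA^M \mid a \in B_p \,\}$, which re-induces $a$ via the explicit definition in Fact \ref{fact_amoeba_covering}.

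For clause (1) I would argue as follows. Since $a$ is an amoeba real over $V$, the third feature gives that $a$ avoids every Borel $\mcalI$-set coded in $V$. Because $W \subseteq V$, every Borel code in $W$ is a Borel code in $V$, and by the absoluteness of $\mcalI$-membership every Borel set that is $\mcalI$-small and coded in $W$ is also $\mcalI$-small and coded in $V$. Hence $a$ avoids every Borel $\mcalI$-set coded in $W$, and the third feature applied to $M = W$ yields that $a$ is an amoeba real over $W$, witnessed by $G' = \{\, p \in \mbbA^W \mid a \in B_p \,\}$, which is $\mbbA^W$-generic over $W$ and re-induces $a$.

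For clause (2), fix $p \in \mbbA^W$. Then $B_p$ is a Borel $\mcalI$-positive set coded in $W \subseteq V$. Using that the assignment $q \mapsto B_q$ realizes $\mbbA^V$ as an equivalent of the $\mcalI$-positive Borel sets of $V$, I would choose $q \in \mbbA^V$ whose associated set represents the $\mcalI$-class of $B_p$, so that $B_q \,\triangle\, B_p \in \mcalI$ with a code in $V$. Since $a$ avoids every Borel $\mcalI$-set coded in $V$, it avoids this symmetric difference, whence $a \in B_p \Leftrightarrow a \in B_q$. Combining this with the descriptions $p \in G' \Leftrightarrow a \in B_p$ over $W$ and $q \in G \Leftrightarrow a \in B_q$ over $V$ yields $p \in G' \Leftrightarrow q \in G$, as required. (When the assignment is literally code-absolute one may simply take $q = p$, so that $G' = G \cap \mbbA^W$.)

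The main obstacle is precisely the input to clause (2): one must know that each $\mcalI$-positive Borel set coded in $W$ is $\mcalI$-equivalent, over $V$, to the set $B_q$ attached to an honest amoeba condition $q \in \mbbA^V$, i.e.\ that the amoeba embedding represents every positive class by a condition. This, together with the absoluteness of the amoeba ideal between $W$ and $V$, is exactly what the analysis of the ideal in \cite[3.4.B]{bartoszynski1995set} supplies; the remaining measure-theoretic absoluteness (openness, measures of open sets, and Borel codes transferring between transitive models) is routine, and the availability of $\AC$ in $W$ and $V$ makes the underlying idealized-forcing facts applicable.
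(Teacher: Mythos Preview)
Your proposal is correct and is exactly the approach the paper indicates: the paper does not give a proof of this fact but simply refers to the idealized-forcing characterization of amoeba forcing in \cite[3.4.B]{bartoszynski1995set} and says the fact follows from it, which is precisely what you spell out. Note that since an amoeba condition is an open set of measure ${<}\,1/2$ and both openness and Lebesgue measure are absolute, $\mbbA^W \subseteq \mbbA^V$ and your parenthetical simplification $q = p$, $G' = G \cap \mbbA^W$ already suffices for clause~(2).
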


 \begin{theorem}
    Assume $(\star)$.
    Then $\omegaone \npreceq_{pT} \nullideal$.
 \end{theorem}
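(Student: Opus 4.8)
The plan is to assume $\omegaone \preceq_{pT} \nullideal$ and derive a contradiction, mirroring the proof of the preceding theorem ($\dominating \npreceq_{pT} \omegaone$) but with the amoeba forcing $\mbbA$ playing the role that the Cohen forcing played there. First I would pass to a convergent map. Since $\nullideal$ is isomorphic to $\conullfilter$ and $\omegaone^*$ is isomorphic to $\omegaone$, Lemma~\ref{Tukey_map} gives a pre-convergent map $\sigma$ from $\conullfilter$ to $\omegaone$; setting $f(B) = \min \sigma(B)$ (the minimum exists in $\ZF$ since $\omega_1$ is well-ordered) and invoking Lemma~\ref{preTukey_imply_Tukey}, I obtain an honest convergent map $f \colon \hat{\mcalN} \to \omega_1$. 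Putting $g(x) = f(R^{L[x]})$ and using that $\{ R^{L[x]} \mid x \in \omega^\omega \}$ is cofinal in $\conullfilter$ by Lemma~\ref{random_cofinal_M}, convergence of $f$ yields the key property
\[
(\ast\ast) \quad \forall \xi < \omega_1 \ \exists x \in \omega^\omega \ \forall y \in \omega^\omega \ \bigl( R^{L[y]} \subseteq R^{L[x]} \Rightarrow g(y) \geq \xi \bigr) .
\]

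To contradict $(\ast\ast)$ it suffices to produce a \emph{single} $\alpha < \omega_1$ such that for every $x \in \omega^\omega$ there is $y \in \omega^\omega$ with $R^{L[y]} \subseteq R^{L[x]}$ and $g(y) = \alpha$; applying $(\ast\ast)$ with $\xi = \alpha + 1$ then gives the contradiction. By $(\star)$ I fix a Solovay code $(S, \varphi)$ for $X_g = \{ (x,z) \mid z \in \WO \wedge g(x) = o(z) \}$, so that $(x,z) \in X_g$ if and only if $L[S][x][z] \models \varphi(S,x,z)$.

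Now I would fix the target ordinal exactly as in the preceding theorem. Take an amoeba real $a$ over $L[S]$ (which exists in $V$, as $\mbbA^{L[S]}$ is countable in $V$ by $(\star)$), and set $\alpha = g(a)$. Reflecting the truth of $\varphi(S,a,z)$ for $z \in \WO$ of order type $\alpha$ into a two-step forcing statement over $L[S]$ — amoeba followed by a collapse realizing a $\WO$-code of type $\alpha$ — I obtain a condition $p \in \mbbA^{L[S]}$ lying in the generic filter of $a$ together with a collapse condition $q$ forcing $\varphi(S, \dot{a}, \dot{z})$. Given an arbitrary $x$, I use Fact~\ref{fact_amoeba_generic} to choose an amoeba real $y$ over $L[S][x]$ whose induced $\mbbA^{L[S]}$-generic filter over $L[S]$ contains $p$, and then realize a witness $z'$ by a collapse-generic over $L[S][x][y]$ below $q$. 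The forced statement gives $\varphi(S, y, z')$ in $L[S][y][z']$, hence $(y,z') \in X_g$ and $g(y) = o(z') = \alpha$. Finally, since $y$ is an amoeba real over $L[S][x]$, the null set it codes covers every null set coded in $L[x]$ and is itself coded in $L[y]$; so every real random over $L[y]$ is random over $L[x]$, that is, $R^{L[y]} \subseteq R^{L[x]}$ by Fact~\ref{fact_amoeba_covering}. This $y$ is as required.

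The hard part will be the middle step: pinning down $\alpha$ as a value forced by a single condition $p$ and then transporting it to a fresh amoeba real $y$ over the larger model $L[S][x]$. Unlike the Cohen case, where one simply extends $p$ to a new generic, here the amoeba forcings $\mbbA^{L[S]}$ and $\mbbA^{L[S][x]}$ differ, so I must rely on Fact~\ref{fact_amoeba_generic} to guarantee that an $\mbbA^{L[S][x]}$-generic over $L[S][x]$ induces an $\mbbA^{L[S]}$-generic over $L[S]$ and that, via the condition-correspondence in Fact~\ref{fact_amoeba_generic}(2), the condition $q \in \mbbA^{L[S][x]}$ matching $p$ can be forced into the generic. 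The collapse bookkeeping needed to realize the $\WO$-code $z'$ of type $\alpha$ over the relevant models is exactly analogous to that in the proof of the preceding theorem, and I would handle it in the same way.
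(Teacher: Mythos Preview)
Your proposal is correct and follows essentially the same approach as the paper's proof: reduce to a convergent map into $\omega_1$, take a Solovay code for its graph, fix $\alpha$ as the value at an amoeba real over $L[S]$, reflect to a condition $p\in\mbbA^{L[S]}$ (together with a collapse condition $q$), and for arbitrary $x$ take an amoeba-generic over $L[S][x]$ through $p$ followed by a collapse-generic through $q$ to obtain $y$ with $g(y)=\alpha$ and $R^{L[y]}\subseteq R^{L[x]}$. The paper handles your ``hard part'' slightly more directly by simply putting $p$ itself into the $\mbbA^{L[S][x]}$-generic (since $\mbbA^{L[S]}\subseteq\mbbA^{L[S][x]}$) and then citing Facts~\ref{fact_amoeba_covering} and~\ref{fact_amoeba_generic} together, rather than routing through the condition-correspondence of Fact~\ref{fact_amoeba_generic}(2).
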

 \begin{proof}
  If there is a pre-convergent map from $\nullideal$ to $\omegaone$ then we can easily construct a convergent map from $(\{R^{L[x]} \mid x\in \omega^\omega\}, \supseteq)$ to $(\omega_1, \leq)$.
  Thus we suppose $f$ is a function from $\{R^{L[x]} \mid x\in \omega^\omega\}$ to $\omega_1$
  and show that $f$ is not convergent.
  Let
  \begin{center}
     $X_f=\{(x,z)\in\omega^\omega \times \omega^\omega \mid 
     z \in \WO \land f(R^{L[x]})=o(z)\}$.
  \end{center}
  By $(\star)$ we can take a set $S$ of ordinals and a formula $\varphi$ such that
  \begin{center}
    $(x,z) \in X_f \Leftrightarrow
    L[S][x][z] \models \varphi(S, x, z)$.
  \end{center}
  Let $a\in\omega^\omega$ is an amoeba real over $L[S]$ and suppose that $f(R^{L[a]})=\alpha$.
  We show that for all $x\in\omega^\omega$ there exists $y\in\omega^\omega$ such that $R^{L[y]}\subseteq R^{L[x]}$ and $f(R^{L[y]})=\alpha$.

  Suppose $x\in\omega^\omega$.
  Let $\dot{z}$ be $\collalpha$-name for a real and let $h$ be a $\collalpha$-generic filter over $L[S][x]$ such that $\dot{z}[h] \in \WO$ and $o(\dot{z}[h])=\alpha$. Let $z=\dot{z}[h]$.
  Then $\varphi(S, a, z)$ holds in $L[S, a, z]$.
  We can take $p\in\mbbA^{L[S]}$ and $q\in \collalpha$ such that
  \begin{center}
    $L[S] \models p\Vdash_{\mbbA} q\Vdash_{\collalpha} \varphi(S, \dot{a}, \dot{z})$
  \end{center}
  where $\dot{a}$ is the canonical $\mbbA$-name for a generic real.
  We take an $\mbbA^{L[S][x]}$-generic filter $g$ over $L[S][x]$ with $p \in g$ and let $y=\dot{a}[g]$.
  We take a $\collalpha$-generic filter $h'$ over $L[S][x][y]$ with $q \in h'$
  and let $z'=\dot{z}[h']$.
  Then $\varphi(S, y, z')$ holds in $L[S][y][z']$. Thus $f(R^{L[y]})=\alpha$.
  By Fact \ref{fact_amoeba_covering}, \ref{fact_amoeba_generic} and that $y$ is an amoeba real over $L[S][x]$, 
  $R^{L[y]}\subseteq R^{L[x]}$ holds.
 \end{proof}

 \subsection{$([\omega^\omega]^\omega, \subseteq)$}

 First, we show that $([\omega^\omega]^\omega, \subseteq)$ is pre-Tukey equivalent to the  constructibility degree on reals under the assumption $(\star)$.

 We define an order $\leq_L$ on $\omega^\omega$ by
 \begin{center}
    $x\leq_L y \Leftrightarrow x\in L[y]$.
 \end{center}
 \begin{lemma}\label{L-relation}
  Assume $(\star)$.
  Then $(\omega^\omega, \leq_L)\equiv_{pT} ([\omega^\omega]^\omega, \subseteq)$.
 \end{lemma}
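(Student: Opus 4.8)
The plan is to establish $\equiv_{pT}$ by producing the two maps required by the characterization in Lemma~\ref{Tukey_map}. Conveniently, both maps can be taken to go from $(\omega^\omega,\leq_L)$ to $([\omega^\omega]^\omega,\subseteq)$: one will be a pre-Tukey map and the other a pre-convergent map, and each rests on the single observation that if $x$ codes an enumeration of a countable set $A\subseteq\omega^\omega$ then $A\subseteq L[x]$. I first note that $(\omega^\omega,\leq_L)$ is a directed set, since for $x,y\in\omega^\omega$ any real coding the pair $(x,y)$ is a $\leq_L$-upper bound, so the pre-Tukey machinery applies.

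For $(\omega^\omega,\leq_L)\preceq_{pT}([\omega^\omega]^\omega,\subseteq)$, I would exhibit a pre-Tukey map $\pi\colon\omega^\omega\to\mcalP([\omega^\omega]^\omega)$ by $\pi(x)=\{A\in[\omega^\omega]^\omega\mid x\in A\}$, which is clearly nonempty-valued. Given $A\in[\omega^\omega]^\omega$, fix a real $x$ coding an enumeration of $A$, so that $A\subseteq L[x]$ (the enumeration exists because $A$ is countable, and only its existence is needed). If $\pi(x')\cap A\downarrow_{\subseteq}\neq\emptyset$ then $x'$ belongs to some countable $B\subseteq A$, hence $x'\in A\subseteq L[x]$, i.e.\ $x'\leq_L x$; thus $x$ witnesses the pre-Tukey condition for $A$. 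This direction uses nothing beyond $\ZF+\DC$.

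For $([\omega^\omega]^\omega,\subseteq)\preceq_{pT}(\omega^\omega,\leq_L)$, by Lemma~\ref{Tukey_map} it suffices to give a pre-convergent map from $(\omega^\omega,\leq_L)$ to $([\omega^\omega]^\omega,\subseteq)$, and this is where $(\star)$ enters. I would set $\sigma\colon\omega^\omega\to\mcalP([\omega^\omega]^\omega)$, $\sigma(x)=\{(\omega^\omega)^{L[x]}\}$. The only point needing care is well-definedness, namely that $(\omega^\omega)^{L[x]}\in[\omega^\omega]^\omega$: it is plainly infinite, and it is countable because clause (1) of $(\star)$ makes $\omega_1$ inaccessible in $L[x]$, so $\omega_1^{L[x]}<\omega_1$ and $(\omega^\omega)^{L[x]}$ is countable in $V$. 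For the pre-convergent condition, given $A\in[\omega^\omega]^\omega$ take $x$ coding an enumeration of $A$, so $A\subseteq(\omega^\omega)^{L[x]}$; then for any $x'$ with $x\leq_L x'$ we have $L[x]\subseteq L[x']$, whence $A\subseteq(\omega^\omega)^{L[x']}$, i.e.\ $\sigma(x')\subseteq A\uparrow_{\subseteq}$, as desired.

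The main obstacle is precisely the well-definedness of $\sigma$, and this is exactly the step that is impossible without $(\star)$: if $\omega_1$ failed to be inaccessible in some $L[x]$, then $(\omega^\omega)^{L[x]}$ could be uncountable and would not even be an element of $[\omega^\omega]^\omega$, so $\sigma$ would not map into the intended directed set at all. Once countability is secured, the verification of both defining inequalities is routine and reduces in each case to the elementary fact that $A\subseteq L[x]$ whenever $x$ codes an enumeration of $A$.
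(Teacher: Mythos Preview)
Your argument is correct. For the direction $([\omega^\omega]^\omega,\subseteq)\preceq_{pT}(\omega^\omega,\leq_L)$ your map $\sigma(x)=\{(\omega^\omega)^{L[x]}\}$ is exactly the paper's map $f(x)=(\omega^\omega)^{L[x]}$ wrapped as a singleton, and the verification is identical. The difference lies in the other direction: the paper reuses the same $f$ and checks that it is Tukey (if $(\omega^\omega)^{L[x']}\subseteq X$ then $x'\in X\subseteq L[r_X]$), which again needs $(\star)$ so that $f$ even lands in $[\omega^\omega]^\omega$. Your pre-Tukey map $\pi(x)=\{A\in[\omega^\omega]^\omega\mid x\in A\}$ sidesteps this and gives $(\omega^\omega,\leq_L)\preceq_{pT}([\omega^\omega]^\omega,\subseteq)$ in $\ZF+\DC$ alone. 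So the paper's presentation is more economical (one map serves both purposes), while yours isolates more sharply where $(\star)$ is actually needed, namely only for the reduction of $([\omega^\omega]^\omega,\subseteq)$ to $(\omega^\omega,\leq_L)$.
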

 \begin{proof}
  We define $f \colon \omega^\omega \rightarrow [\omega^\omega]^\omega$  by $f(x)=(\omega^\omega)^{L[x]}$. 
  For $X\in [\omega^\omega]^\omega$ we can take $r_X \in\omega^\omega$ which codes a bijection from $\omega$ to $X$.
  We can show straightforwardly that for each $X \in [\omega^\omega]^\omega$, $r_X$ witnesses that $f$ is Tukey and convergent.
 \end{proof}

 The following facts are standard results in forcing theory. 
 These proofs can be found in Lemma 15.43 in \cite{jech2006set} and Theorem A.0.9 in \cite{larson2004stationary}.

 \begin{fact}\label{fact:immediate}
  Assume $\AC$ in $V$.
  Let $B$ be a complete Boolean algebra and 
  let $G$ be a $B$-generic filter over $V$.
  If $M$ is a model of $\ZFC$ such that $V \subseteq M \subseteq V[G]$, then there exists a complete subalgebra $D\subseteq B$ such that $M=V[D\cap G]$.
 \end{fact}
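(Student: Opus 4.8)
The plan is to show that $M$ is generated over $V$ by a single set and then read the required subalgebra off a $V$-name for that set. Write $\lambda = |B|$ and identify the underlying set of $B$ with $\lambda$. First I would prove that $M = V[\mcalP(\lambda)^M]$, where $\mcalP(\lambda)^M = \mcalP(\lambda)\cap M$. Take any set of ordinals $x\in M$, say $x\subseteq\gamma$, fix in $V$ a name $\dot x$ with $\dot x[G]=x$, and set $b_\xi = \bool{\check\xi\in\dot x}\in B$ for $\xi<\gamma$. The sequence $\langle b_\xi : \xi<\gamma\rangle$ may be long, but its range is a subset of $B$ of size at most $\lambda$, so $\xi\mapsto b_\xi$ partitions $\gamma$ into at most $\lambda$ classes $C_b = \{\xi : b_\xi = b\}$, and $\langle C_b : b\in B\rangle\in V$. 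Since $x = \{\xi : b_\xi\in G\}$, for each nonempty $C_b$ we have $b\in G$ iff $C_b\subseteq x$; hence $h_x := G\cap\{b_\xi : \xi<\gamma\} = \{b : C_b\cap x\neq\emptyset\}$ is definable from $x$ and the partition, so $h_x\in M$, and $h_x$ is coded by a subset of $\lambda$. As $x = \bigcup\{C_b : b\in h_x\}$, we get $x\in V[h_x]\subseteq V[\mcalP(\lambda)^M]$. Because $M\models\ZFC$, every element of $M$ is coded by a set of ordinals, so this yields $M\subseteq V[\mcalP(\lambda)^M]$; the reverse inclusion is trivial. Using $\AC$ in $M$ to well-order $\mcalP(\lambda)^M$, I can code it by a single set of ordinals $A\in M$ with $M = V[A]$.

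Having reduced to $M = V[A]$, fix a name $\dot A\in V$ for $A$ and let $D$ be the complete subalgebra of $B$ generated by $\{\bool{\check\xi\in\dot A} : \xi\in\Ord\}$, a subset of $B$ of size at most $\lambda$. Since $\dot A\in V$, both the generating set and $D$ lie in $V$, and $D$ is complete by construction. It remains to prove $M = V[G\cap D]$.

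For $V[A]\subseteq V[G\cap D]$, write $b_\xi = \bool{\check\xi\in\dot A}$; each $b_\xi$ is a generator of $D$, and $\xi\in A$ iff $b_\xi\in G$ iff $b_\xi\in G\cap D$, so $A = \{\xi : b_\xi\in G\cap D\}\in V[G\cap D]$. For the reverse inclusion $V[G\cap D]\subseteq V[A]$, the key point is that the $V$-generic ultrafilter $G\cap D$ on $D$ is determined by its restriction to the generators together with the structure of $D$ in $V$. Fix in $V$ a generating sequence $\langle S_\alpha : \alpha<\delta\rangle$ with $S_0 = \{b_\xi\}$, $S_{\alpha+1}$ obtained by closing $S_\alpha$ under complements and under joins $\bigvee T$ of $V$-subsets $T\subseteq S_\alpha$, and $D = \bigcup_{\alpha<\delta} S_\alpha$. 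In $V[A]$ we know $G\cap S_0 = \{b_\xi : \xi\in A\}$; and for a $V$-generic ultrafilter one has $\neg d\in G \Leftrightarrow d\notin G$ and, for $T\in V$, $\bigvee T\in G \Leftrightarrow T\cap G\neq\emptyset$. Hence $G\cap S_{\alpha+1}$ is computable from $G\cap S_\alpha$, and by transfinite recursion along the sequence (which lies in $V\subseteq V[A]$) we obtain $G\cap D\in V[A]$. Therefore $V[G\cap D] = V[A] = M$.

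The routine points — that the complete subalgebra generated by a set exists in $V$, and the two genericity facts about $\bigvee$ and $\neg$ (both proved by refining a join to a maximal antichain and invoking genericity) — I would dispatch quickly. The main obstacle is the first paragraph: establishing that the intermediate model is generated over $V$ by a set. The crucial device there is grouping the coordinates $\xi$ by their common Boolean value $b_\xi$, so that only at most $\lambda=|B|$ distinct values occur and $G\cap\{b_\xi\}$ becomes visible inside $M$; this is exactly where $\AC$ in $M$ is used, both to code arbitrary elements of $M$ by sets of ordinals and to replace $\mcalP(\lambda)^M$ by a single set $A$.
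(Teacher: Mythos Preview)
The paper does not prove this statement; it is recorded as a Fact with the remark that a proof can be found in Jech's \emph{Set Theory} (Lemma~15.43). Your argument is correct and is essentially the standard proof given there: first reduce to $M=V[A]$ for a single set of ordinals $A$ by observing that any $x\subseteq\gamma$ in $M$ is recovered from the at most $|B|$-many Boolean values $\bool{\check\xi\in\dot x}$ together with their trace on $G$, which lies in $M$; then take $D$ to be the complete subalgebra generated by $\{\bool{\check\xi\in\dot A}\}$ and verify $V[A]=V[G\cap D]$. The one place worth tightening is the reverse inclusion $V[G\cap D]\subseteq V[A]$: your transfinite-recursion description is fine, but you should say explicitly that for each $d\in S_{\alpha+1}$ you fix in $V$ a specific representation as $\neg d'$ or $\bigvee T$ (using $\AC$ in $V$), so that the recursion in $V[A]$ is along a single function from $V$; the genericity facts $\neg d\in G\Leftrightarrow d\notin G$ and $\bigvee T\in G\Leftrightarrow T\cap G\neq\emptyset$ for $T\in V$ then do the rest.
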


 \begin{fact}\label{fact:collapse_assorb}
  Assume $\AC$ in $V$.
  Let $\mbbP$ be a forcing notion with $\mbbP \in V_\alpha$
  and let $g$ be a $\mbbP$-generic filter over $V$.
  Suppose $G$ is a $\collapse{2^\alpha}$-generic filter over $V$.
  Then there is a $\collapse{2^\alpha}$-generic filter $H\in V[G]$ over $V[g]$ such that
  $V[G]=V[g][H]$.
 \end{fact}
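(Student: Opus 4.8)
The plan is to exploit the \emph{universality} of the L\'evy collapse together with the intermediate model theorem, Fact \ref{fact:immediate}. Write $\lambda = (2^\alpha)^V$ and let $\mbbB$ be the Boolean completion of $\collapse{\lambda}$, so that $V[G]$ is a generic extension of $V$ by $\mbbB$, and note $|\mbbP| \le \lambda$ since $\mbbP \in V_\alpha$. I first observe that the asserted conclusion $V[G] = V[g][H]$ presupposes $V[g] \subseteq V[G]$, i.e.\ $g \in V[G]$; I take this as part of the setup (the smallness of $\mbbP$ is precisely what guarantees, in the applications, that the given $g$ can be located inside the collapse extension), and work under $g \in V[G]$ throughout.

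First I would apply Fact \ref{fact:immediate} to $V[g]$, which is a model of $\ZFC$ with $V \subseteq V[g] \subseteq V[G]$. This produces a complete subalgebra $\mbbD \subseteq \mbbB$ with $V[g] = V[\mbbD \cap G]$. The filter $\mbbD \cap G$ is then $\mbbD$-generic over $V$, and the remaining genericity of $G$ is captured by the quotient forcing $\mbbB / \mbbD$ computed in $V[g]$: the image $\bar G$ of $G$ is $\mbbB/\mbbD$-generic over $V[g]$ and satisfies $V[g][\bar G] = V[G]$.

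The heart of the argument is to identify this quotient. I would show that, in $V[g]$, the complete Boolean algebra $\mbbB/\mbbD$ is isomorphic to the completion of $\collapse{\lambda}$. This is where universality enters: any complete, atomless Boolean algebra with a dense subset of size $\le \lambda$ that forces $|\lambda| = \aleph_0$ is isomorphic to the completion of $\collapse{\lambda}$ (see \cite{jech2006set}). The quotient $\mbbB/\mbbD$ inherits a dense subset of size $\le \lambda$ from $\mbbB$, whose canonical dense subset $\collapse{\lambda}$ has size $\lambda$. Fixing an isomorphism $\iota \in V[g]$ between $\mbbB/\mbbD$ and the completion of $\collapse{\lambda}$, the $\iota$-image of $\bar G$ determines a $\collapse{\lambda}$-generic filter $H$ over $V[g]$ with $V[g][H] = V[g][\bar G] = V[G]$ and $H \in V[G]$, which is exactly the desired conclusion.

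The main obstacle is verifying the hypotheses of the universality theorem for $\mbbB/\mbbD$ as computed in $V[g]$ rather than in $V$: the density bound is immediate, but I must check that the quotient \emph{forces} $|\lambda| = \aleph_0$ over $V[g]$, not merely that this holds in the single model $V[G]$. This follows from the weak homogeneity of $\collapse{\lambda}$: since $\mbbB \Vdash |\check\lambda| = \aleph_0$ over $V$, this statement descends to every subalgebra quotient and so is forced by $\mbbB/\mbbD$ over $V[g]$; atomlessness of the quotient is then automatic. The smallness hypothesis $\mbbP \in V_\alpha$ is used exactly to keep $\lambda$ large enough that the collapse is nontrivial and the given $g$ is realizable inside $V[G]$.
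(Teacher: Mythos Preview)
The paper does not give its own proof of this fact; it simply cites Lemma~15.43 of Jech and Theorem~A.0.9 of Larson. Your outline is exactly the standard argument found in those references: apply the intermediate-model theorem (the paper's Fact~\ref{fact:immediate}) to realize $V[g]$ as $V[\mbbD\cap G]$ for a complete subalgebra $\mbbD\subseteq\mbbB$, and then invoke the universal property of $\collapse{\lambda}$ (any atomless complete Boolean algebra with a dense set of size $\le\lambda$ that collapses $\lambda$ is forcing-equivalent to it) to identify the quotient $\mbbB/\mbbD$ in $V[g]$ with the completion of $\collapse{\lambda}$. So there is nothing to compare against---your approach \emph{is} the cited one.

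Two small corrections to the write-up. First, the side remark ``$|\mbbP|\le\lambda$ since $\mbbP\in V_\alpha$'' is not justified: membership in $V_\alpha$ bounds $\rk(\mbbP)$, and $|V_\alpha|$ can far exceed $2^{|\alpha|}$. This does not damage the argument, since you explicitly (and correctly) take $g\in V[G]$ as a standing hypothesis anyway; but the inequality as stated is false in general. Second, the reason $\mbbB/\mbbD$ forces $|\lambda|=\aleph_0$ over $V[g]$ has nothing to do with weak homogeneity: it is simply that every $(\mbbB/\mbbD)$-generic extension of $V[g]$ is a $\mbbB$-generic extension of $V$ (this is what the two-step decomposition says), and $\mbbB$ collapses $\lambda$. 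Your conclusion is right; only the stated justification needed adjusting. Note also that the ``atomlessness is automatic'' step tacitly uses that $\lambda$ is still uncountable in $V[g]$, which holds in the intended applications but is worth making explicit.
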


 \begin{lemma}\label{lemma:countable_subset}
  Assume $(\star)$.
  Suppose $(\omega^\omega, \sqsubseteq)$ is  $\sigma$-directed.
  Then $(\omega^\omega, \sqsubseteq) \preceq_{pT} ([\omega^\omega]^\omega, \subseteq)$.
 \end{lemma}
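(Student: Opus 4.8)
The plan is to route the reduction through the constructibility degree ordering $(\omega^\omega, \leq_L)$, exploiting that Lemma \ref{L-relation} already supplies $(\omega^\omega, \leq_L) \equiv_{pT} ([\omega^\omega]^\omega, \subseteq)$. Since $\preceq_{pT}$ is a pre-ordering, and hence transitive, it suffices to produce a pre-Tukey map from $(\omega^\omega, \sqsubseteq)$ to $(\omega^\omega, \leq_L)$. Once that is done, chaining with the equivalence of Lemma \ref{L-relation} yields $(\omega^\omega, \sqsubseteq) \preceq_{pT} ([\omega^\omega]^\omega, \subseteq)$ as required.

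The crux is the observation that $(\star)$ makes every constructibility cone of reals countable. First I would record that for each $y \in \omega^\omega$ the $\leq_L$-down-set is $y\downarrow_{\leq_L} = (\omega^\omega)^{L[y]}$. By clause (1) of $(\star)$, with $y$ regarded as a set of ordinals, $\omega_1$ is inaccessible in $L[y]$; in particular $\omega_1$ is a limit cardinal of $L[y]$, so the first uncountable cardinal $(\omega_1)^{L[y]}$ of $L[y]$ is strictly below $\omega_1$, i.e.\ it is a countable ordinal in $V$. Because $L[y] \models \CH$, inside $L[y]$ there is a bijection between $(\omega_1)^{L[y]}$ and $(\omega^\omega)^{L[y]}$; this bijection lies in $L[y] \subseteq V$ and has countable domain, so $(\omega^\omega)^{L[y]}$ is countable in $V$. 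This is the step I expect to carry the genuine content, and I would present it carefully; everything that follows is formal.

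With countability secured, the pre-Tukey map is immediate. Define $\pi \colon \omega^\omega \rightarrow \mcalP(\omega^\omega)$ by $\pi(x) = \{x\}$; this is everywhere non-empty. Given $y \in \omega^\omega$, the down-set $y\downarrow_{\leq_L} = (\omega^\omega)^{L[y]}$ is countable by the previous paragraph, so the hypothesis that $(\omega^\omega, \sqsubseteq)$ is $\sigma$-directed furnishes a $\sqsubseteq$-upper bound $x$ of $(\omega^\omega)^{L[y]}$. Then for every $x' \in \omega^\omega$ the condition $\pi(x') \cap y\downarrow_{\leq_L} \neq \emptyset$ holds exactly when $x' \in (\omega^\omega)^{L[y]}$, and in that case $x' \sqsubseteq x$. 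Thus $x$ witnesses the pre-Tukey clause for $y$, so $\pi$ is a pre-Tukey map and $(\omega^\omega, \sqsubseteq) \preceq_{pT} (\omega^\omega, \leq_L)$.

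Finally I would combine: by Lemma \ref{L-relation} we have $(\omega^\omega, \leq_L) \preceq_{pT} ([\omega^\omega]^\omega, \subseteq)$, and transitivity of $\preceq_{pT}$ together with the reduction just built gives $(\omega^\omega, \sqsubseteq) \preceq_{pT} ([\omega^\omega]^\omega, \subseteq)$. The only delicate point in the whole argument is the countability of $(\omega^\omega)^{L[y]}$; note that this uses only clause (1) of $(\star)$ directly, while clause (2) enters solely through the invocation of Lemma \ref{L-relation}.
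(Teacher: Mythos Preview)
Your argument is correct, but the paper's proof is both shorter and strictly more general. The paper bypasses $(\omega^\omega,\leq_L)$ entirely and writes down a pre-convergent map in the other direction: define $\pi\colon [\omega^\omega]^\omega \to \mcalP(\omega^\omega)$ by $\pi(X)=\{x\in\omega^\omega\mid \forall y\in X\ (y\sqsubseteq x)\}$; $\sigma$-directedness alone makes each $\pi(X)$ non-empty, and for any $x\in\omega^\omega$ any countably infinite $X\ni x$ witnesses the pre-convergent clause, since $X\subseteq X'$ forces every element of $\pi(X')$ to lie above $x$. Notice that this proof never invokes $(\star)$ at all; the hypothesis in the lemma statement is superfluous for the paper's argument.

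Your route, by contrast, uses $(\star)$ essentially: clause~(1) to get countability of $(\omega^\omega)^{L[y]}$, and clause~(2) indirectly through Lemma~\ref{L-relation}. What your approach buys is an explicit factoring through the constructibility-degree ordering, which makes the role of $\leq_L$ as a ``universal'' $\sigma$-directed order on reals visible; but as a proof of the lemma as stated it is a detour, and it hides the fact that the result holds in $\ZF+\DC$ for any $\sigma$-directed pre-order on $\omega^\omega$, with no regularity assumptions on sets of reals whatsoever.
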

 \begin{proof}
  We define $\pi \colon [\omega^\omega]^\omega \rightarrow \mcalP(\omega^\omega)$ By
  \begin{center}
    $\pi(X)=\{x\in\omega^\omega \mid \forall y\in X \  
    (y \sqsubseteq x)\}$.
  \end{center}
  For $x\in\omega^\omega$, each $X\in[\omega^\omega]^\omega$ with $x\in X$ witnesses that $\pi$ is pre-convergent.
 \end{proof}

 \begin{theorem}
  Assume $(\star)$.
  \begin{enumerate}
    \item $(\mcalN, \subseteq)\preceq_{pT} ([\omega^\omega]^\omega, \subseteq)$.
    \item $(\omega_1, \leq) \preceq_{pT} ([\omega^\omega]^\omega, \subseteq)$.
    \item $([\omega^\omega]^\omega, \subseteq) \npreceq_{pT} (\mcalN, \subseteq)$.
  \end{enumerate}
 \end{theorem}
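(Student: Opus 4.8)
The plan is to route all three parts through the constructibility preorder on reals: by Lemma~\ref{L-relation}, $(\omega^\omega,\leq_L)\equiv_{pT}([\omega^\omega]^\omega,\subseteq)$, so it suffices to compare $\mcalN$ and $\omega_1$ with $(\omega^\omega,\leq_L)$. For (2) I would exhibit the pre-Tukey map $\pi\colon\omega_1\to\mcalP(\omega^\omega)$ given by $\pi(\xi)=\{y\in\WO\mid o(y)=\xi\}$ (Lemma~\ref{Tukey_map}): each $\pi(\xi)$ is non-empty as $\xi$ is countable, and since $x\downarrow_{\leq_L}=(\omega^\omega)^{L[x]}$, the condition $\pi(\xi')\cap x\downarrow_{\leq_L}\neq\emptyset$ means some real of $L[x]$ codes a well-order of type $\xi'$, i.e. $\xi'<\omega_1^{L[x]}$; by $(\star)(1)$, $\omega_1$ is inaccessible in $L[x]$, so $\omega_1^{L[x]}<\omega_1$ and $\xi=\omega_1^{L[x]}$ witnesses pre-Tukeyness. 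For (1) I would instead build a pre-convergent map from $(\omega^\omega,\leq_L)$ to $(\hat\mcalN,\supseteq)\cong(\mcalN,\subseteq)$, namely $\sigma(x)=\{R^{L[x]}\}$: given a conull $B$, Lemma~\ref{random_cofinal_M} yields $a$ with $R^{L[a]}\subseteq B$, and then $x'\geq_L a$ forces $R^{L[x']}\subseteq R^{L[a]}\subseteq B$, i.e. $\sigma(x')\subseteq B\uparrow_{\supseteq}$; by Lemma~\ref{Tukey_map} this gives $(\mcalN,\subseteq)\preceq_{pT}(\omega^\omega,\leq_L)$.

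For the substantive part (3) it suffices to prove $(\omega^\omega,\leq_L)\npreceq_{pT}(\mcalN,\subseteq)$. Suppose not; by Lemma~\ref{Tukey_map} fix a pre-Tukey map $\pi\colon\omega^\omega\to\mcalP(\hat\mcalN)$ from $(\omega^\omega,\leq_L)$ to $(\hat\mcalN,\supseteq)$, which (by the remark after Lemma~\ref{Tukey_map}) I may take to have $\supseteq$-upward closed values, i.e. values closed under conull subsets. By $(\star)(2)$ this relation is $\infty$-Borel, so I fix a set $S$ of ordinals coding it, and set $N_z=\{x\mid R^{L[z]}\in\pi(x)\}$. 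Three structural facts drive the argument. First, applying the pre-Tukey condition to $B=R^{L[z]}$ (and using upward closure to rewrite $\pi(x')\cap B\downarrow_{\supseteq}\neq\emptyset$ as $R^{L[z]}\in\pi(x')$) shows that $N_z$ is $\leq_L$-bounded, hence countable; being a countable $\infty$-Borel set with parameter $(S,z)$ it has no perfect subset, so the Mansfield--Solovay phenomenon available under $(\star)$ gives $N_z\subseteq L[S][z]$. Second, since $\pi$ takes values in $\hat\mcalN$, $N_z$ depends only on the conull set $R^{L[z]}$, and upward closure yields the monotonicity $R^{L[z']}\subseteq R^{L[z]}\Rightarrow N_z\subseteq N_{z'}$. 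Third, since $\{R^{L[z]}\}$ is cofinal in $(\hat\mcalN,\supseteq)$ (Lemma~\ref{random_cofinal_M}) and each $\pi(x)$ is non-empty and upward closed, every $x$ lies in some $N_z$, so $\bigcup_z N_z=\omega^\omega$.

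Now, using $(\star)(1)$, I would pick a real $x^\ast$ with $(2^{2^\omega})^{L[S]}$ countable in $L[S][x^\ast]$, and fix $z_0$ with $x^\ast\in N_{z_0}$. The crux is to manufacture a real $z$ with $R^{L[z]}\subseteq R^{L[z_0]}$ yet $x^\ast\notin L[S][z]$: monotonicity then gives $x^\ast\in N_{z_0}\subseteq N_z$, while the first fact gives $N_z\subseteq L[S][z]$, a contradiction. To build $z$, let it be an amoeba real over $L[S][x^\ast][z_0]$ (such a generic exists because $(2^{2^\omega})^{L[S][x^\ast][z_0]}$ is countable by $(\star)(1)$). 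By Fact~\ref{fact_amoeba_covering} the null set coded by $z$ covers every null set of $L[z_0]$, so $R^{L[z]}\subseteq R^{L[z_0]}$. By Fact~\ref{fact_amoeba_generic}, $z$ is an amoeba real over $L[S]$ as well, so $L[S][z]$ is a generic extension of $L[S]$ by a poset of size $(2^{\aleph_0})^{L[S]}$; such a forcing preserves cardinals above $(2^{\aleph_0})^{L[S]}$, so $(2^{2^\omega})^{L[S]}$ remains uncountable in $L[S][z]$, whereas it is countable in $L[S][x^\ast]$. Hence $x^\ast\notin L[S][z]$, which is the desired contradiction.

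The main obstacle is a single-valuing difficulty special to this direction. In the earlier non-reducibility proofs the relevant side was a filter, so a pre-convergent map could be replaced by an honest map via unions; here the object $(\omega^\omega,\leq_L)$ has no such joins, and a perfect $\sigma$-value would block the naive reduction. My plan avoids this entirely by working with the multivalued pre-Tukey map, whose sections $N_z$ are automatically countable, and by exploiting the rigidity that $\pi$ sees only the conull set $R^{L[z]}$, i.e. the null ideal of $L[z]$, which is strictly coarser than the $\leq_L$-degree of $z$; the amoeba real realizes exactly this coarsening while being too small to recover $x^\ast$. I expect the two points needing the most care to be the precise statement of the Mansfield--Solovay fact for $\infty$-Borel sets used in the first structural fact, and the verification that amoeba-genericity over $L[S][x^\ast][z_0]$ descends to $L[S]$ with the stated size bound in the final step.
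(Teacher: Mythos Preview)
Your arguments for (1) and (2) are correct and in fact coincide with the alternative proofs the paper itself sketches (the explicit maps $x\mapsto R^{L[x]}$ and $\xi\mapsto\{y\in\WO\mid o(y)=\xi\}$ appear verbatim in a commented-out passage); the printed proof just packages both through Lemma~\ref{lemma:countable_subset} instead of through Lemma~\ref{L-relation}, which is the same content.

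For (3) your approach is correct but genuinely different from the paper's. You work on the pre-Tukey side, forming the sections $N_z=\{x\mid R^{L[z]}\in\pi(x)\}$, and the engine of your contradiction is the Mansfield--Solovay theorem for $\infty$-Borel sets (a countable $\infty$-Borel set with code $(S,z)$ lies in $L[S][z]$), combined with the observation that an amoeba real over $L[S]$ gives only a c.c.c.\ extension of $L[S]$ and so cannot contain your $x^\ast$. The paper instead works on the pre-convergent side: it fixes an amoeba real $z'$ over $L[S]$, picks $w'\in\pi(R^{L[z']})$, takes $\alpha=\omega_1^{L[w']}$, and lets the ``bad'' real $x$ be a $\Coll(\omega,\alpha)$-generic coding $\alpha$; to push the relevant Solovay-coded statement through to a new amoeba real $z$, it invokes the intermediate-model theorem (Fact~\ref{fact:immediate}) and collapse absorption (Fact~\ref{fact:collapse_assorb}) rather than Mansfield--Solovay. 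Both arguments hinge on the same amoeba-forcing idea---$z$ refines $R^{L[\cdot]}$ while keeping $L[S][z]$ small---but your route is shorter and more conceptual at the cost of importing a (standard, but unproved-in-the-paper) perfect-set theorem, whereas the paper's route is more hands-on and self-contained relative to the forcing facts it has already quoted. Your flagged checkpoints are the right ones: the Mansfield--Solovay step is the only place you lean on something external, and the descent of amoeba-genericity to $L[S]$ with the c.c.c.\ (hence cardinal-preserving) conclusion goes through exactly as you say via Fact~\ref{fact_amoeba_generic}.
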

 \begin{proof}
  Using Lemma \ref{lemma:countable_subset},
  $(1)$ follows from Lemma \ref{random_cofinal_M} and $(2)$ follows from $\omegaone \equiv_{pT} (\WO, \trianglelefteq)$.

  $(3) \colon $
  By Lemma \ref{random_cofinal_M} and \ref{L-relation}, it suffices to show that there is no pre-convergent map from 
  $(\{R^{L[x]} \mid x \in \omega^\omega\}, \supseteq)$ to $(\omega^\omega, \leq_L)$.
  Suppose $\pi$ is a function from 
  $\{R^{L[x]} \mid  x\in\omega^\omega\}$ to
  $\mcalP(\omega^\omega)$.
  Let
  \begin{center}
    $X=\{ (z,r)\in\omega^\omega\times \omega^\omega
    \mid r\in\WO \land
    \exists w \in \pi(R^{L[z]})(o(r)$ is uncountable in $L[w])\}$.
  \end{center}
  We take a set $S$ of ordinals and a formula $\varphi$ such that
  \begin{center}
    $(z, r)\in X \Leftrightarrow
    L[S][z][r] \models \varphi(S,z,r)$.
  \end{center}
  Let $z'\in\omega^\omega$ be an amoeba real over $L[S]$ and take 
  $w' \in \pi(R^{L[z']})$.
  Let $\alpha=\omega_1^{L[w']}$.
  Note that if $r\in \WO$ and $o(r)=\alpha$ then $(z', r) \in X$.

  Let $\delta$ be an ordinal such that $(2^{2^\alpha})^{L[S]} < \delta <\omega_1$.
  Let $\psi(S,z)$ denote the formula 
  \begin{center}
    $\forall r\in \WO (o(r)=\alpha \Rightarrow 
    L[S][z][r]\models \varphi(S, z, r))$.
  \end{center}
  Then by the homogeneity of $\collapse{\delta}$ it follows that
  \begin{center}
    $L[S][z']\models 1_{\collapse{\delta}} \Vdash_{\collapse{\delta}} 
    \psi(S, z', r)$.
  \end{center}
  Thus we can take $p\in\mbbA^{L[S]}$ such that 
  \begin{center}
    $L[S]\models p \Vdash_{\mbbA^{L[S]}} 1_{\collapse{\delta}} \Vdash_{\collapse{\delta}} 
    \psi(S, z', r)$.
  \end{center}
  Let $\dot{x}$ be a $\collapse{\alpha}$-name for a real and 
  let $g$ be a $\collapse{\alpha}$-generic filter over $L[S]$ such that $\dot{x}[g] \in \WO$ and $o(\dot{x}[g])=\alpha$.
  Let $x=\dot{x}[g]$.
  Suppose $y\in\omega^\omega$.
  To show that $\pi$ is not pre-convergent, we find $z,w \in \omega^\omega$ such that $R^{L[z]}\subseteq R^{L[y]}$, $w\in\pi(R^{L[y]})$ and $x \notin L[w]$.

  Let $h$ be an $\mbbA$-generic filter over $L[S][x][y]$ with $p\in h$ and let $z\in\omega^\omega$ be an amoeba real over $L[S][x][y]$ induced by $h$.
  Then $R^{L[z]} \subseteq R^{L[y]}$ holds by Fact \ref{fact_amoeba_covering}.
  Note that $(x, z)\in\omega^\omega \times \omega^\omega$ is $\collapse{\alpha}\ast \dot{\mbbA}$-generic over $L[S]$.
  Let $B(\collapse{\alpha}\ast\dot{\mbbA})$ denote the completion of $\collapse{\alpha}\ast\dot{\mbbA}$.
  $L[S][z]$ is an $\mbbA$-generic extension of $L[S]$, and is an immediate model between $L[S]$ and a $B(\collapse{\alpha}\ast\dot{\mbbA})$-generic extension $L[S][z][x]$.
  Hence by Fact \ref{fact:immediate}, there exists a complete subalgebra $D$ of $B(\collapse{\alpha}\ast\dot{\mbbA})$ such that $L[S][z]$ is a $D$-generic extension of $L[S]$.
  Let $\mbbQ$ be the quotient of $B(\collapse{\alpha}\ast\dot{\mbbA})$ by $D$. Then $L[S][z][x]$ is a $\mbbQ$-generic extension of $L[S][z]$.
  Since the size of $B(\collapse{\alpha}\ast\dot{\mbbA})$ and $\mbbQ$ is less that $\delta$,
  by Fact \ref{fact:collapse_assorb}
  we can take a $\collapse{\delta}$-generic filter $k$ over $L[S][z]$ such that $x \in L[S][z][k]$.
  
  By construction, $\psi(S,z)$ holds in $L[S][z][k]$.
  Thus
  \begin{center}
    $L[S][z][x] \models \varphi(S, z, x)$.
  \end{center}
  This is equivalent to $(z, x) \in X$, so 
  there is $w \in \pi(R^{L[z]})$ such that $\alpha$ is uncountable in $L[w]$.
  Since $o(x)=\alpha$, $x \notin L[w]$ holds.
 \end{proof}

 \subsection{Forcing absoluteness argument}

 From the previous results and Lemma \ref{fact_Solovay_LR}, it follows as a corollary that the pre-Tukey relation $(\mcalN, \subseteq) \npreceq_{pT} (\mcalM, \subseteq) \npreceq_{pT} (\omega^\omega, \leq^*)$  in  the Solovay model and in $L(\mathbb{R})$ with determinacy. 
 On the other hand, these results concerning such models can also be proved (under suitable assumptions) using arguments on the forcing absoluteness. 
 Author learned this argument from Schilhan.

 First, we see the argument for the Solovay model.
 A proof of the following fact can be found in \cite{fuchino2002kunen}.

 \begin{fact}[Kunen]\label{absolute_LR}
  Assume $\AC$ in $V$.
  Suppose that $\kappa$ is a weakly compact cardinal in $V$ and $G$ is a $\collkappa$-generic filter over $V$.
  Let $\mbbP$ be a c.c.c. forcing notion in $V[G]$ and let $H$ be a $\mbbP$-generic filter over $V[G]$.
  Then $\VRVG$ and $V(\mbbR^{V[G][H]})$ are elementary equivalent.
  \end{fact}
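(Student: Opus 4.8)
The plan is to reduce the elementary equivalence to two separate observations: that over a fixed ground model $V$ any two Solovay models for the same inaccessible $\kappa$ have the \emph{same} first-order theory, and that $V(\mbbR^{V[G][H]})$ is itself such a Solovay model. The first observation needs only inaccessibility; the hypothesis of weak compactness is consumed entirely by the second.

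First I would recall Solovay's representation lemma, in exactly the form already used in the proof of Fact~\ref{fact_Solovay_LR}: for every formula $\psi$ and every $r\in\mbbR^{V[G]}$, $v\in V$, there is $\alpha<\kappa$ with $r\in V[G_\alpha]$ such that, by the homogeneity of $\collkappa$, membership in $\{x : V[G]\models\psi(v,r,x)\}$ is decided over $V[G_\alpha]$ by the forcing relation of the tail collapse $\Coll(\omega,[\alpha,\kappa))$. Running this through the logical structure of an arbitrary sentence $\Phi$ whose parameters lie in $V$ only, one shows that the truth value of $\VRVG\models\Phi$ is computed inside $V$ from $\Phi$ and the forcing relation of $\collkappa$, and in particular does not depend on the generic $G$. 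Writing $T_\kappa$ for this fixed theory, every $\collkappa$-generic extension $V[G']$ of $V$ satisfies $V(\mbbR^{V[G']})\models T_\kappa$, so all such Solovay models are elementarily equivalent.

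It then suffices to produce, inside $V[G][H]$, a $\collkappa$-generic filter $G'$ over $V$ with $\mbbR^{V[G']}=\mbbR^{V[G][H]}$; for then $V(\mbbR^{V[G][H]})=V(\mbbR^{V[G']})\models T_\kappa$, hence is elementarily equivalent to $\VRVG$. The idea is to absorb the ccc forcing $\mbbP$ into the Levy collapse. Since $\kappa$ is weakly compact there are stationarily many inaccessible $\lambda<\kappa$, and using the $\Pi^1_1$-reflection (indescribability) available at $\kappa$ one chooses such a $\lambda$ so that $\mbbP$ reflects to a complete suborder $\mbbP_\lambda$ living over $V[G_\lambda]$ and, crucially, every real of $V[G][H]$ already appears in the corresponding intermediate extension $V[G_\lambda][H_\lambda]$. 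Because $\mbbP_\lambda$ is ccc of size $<\kappa$ it lies in some $V_\mu$ with $\mu<\kappa$, so a tail block $\Coll(\omega,[\mu,\kappa))$ of the collapse absorbs it in the manner of Fact~\ref{fact:collapse_assorb}: rearranging generics yields a genuine $\collkappa$-generic $G'$ over $V$ with $V[G']\subseteq V[G][H]$ and the same reals.

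The hard part is precisely this reflection-and-absorption step. One must simultaneously (i) reflect both the ccc poset $\mbbP$ and the condition ``all new reals are captured'' down to an inaccessible $\lambda<\kappa$, which is where weak compactness via $\Pi^1_1$-indescribability is indispensable, and (ii) verify that after absorbing the reflected forcing into the collapse the resulting $G'$ is genuinely $\collkappa$-generic over $V$ and produces exactly $\mbbR^{V[G][H]}$. Mere inaccessibility delivers the representation lemma of the second paragraph but not this reflection, which is the reason the hypothesis is strengthened from inaccessibility to weak compactness; the detailed verification is carried out in \cite{fuchino2002kunen}.
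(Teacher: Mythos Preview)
The paper does not give its own proof of this fact; it merely cites \cite{fuchino2002kunen}. Your two-step outline --- homogeneity of $\collkappa$ makes the theory of $\VRVG$ independent of $G$, and weak compactness is used to exhibit $\mbbR^{V[G][H]}$ as $\mbbR^{V[G']}$ for some genuine $\collkappa$-generic $G'$ --- is exactly the standard argument in that reference, so in that sense you and the paper agree.

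One point in your sketch is garbled, though. You write that reflection produces $\lambda<\kappa$ so that ``every real of $V[G][H]$ already appears in the corresponding intermediate extension $V[G_\lambda][H_\lambda]$''. That cannot be right as stated: $V[G_\lambda][H_\lambda]$ has strictly fewer reals than $V[G]$, since the tail collapse $\Coll(\omega,[\lambda,\kappa))$ still adds reals. The actual mechanism is not that a single small model captures all the reals, but that the full two-step iteration $\collkappa * \dot{\mbbP}$ can be refactored (using weak compactness, e.g.\ via the tree property or $\Pi^1_1$-indescribability applied to a suitable tree of partial absorptions) so that $V[G][H]$ contains a $\collkappa$-generic $G'$ over $V$ with $\mbbR^{V[G']}=\mbbR^{V[G][H]}$. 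You correctly identify this as the hard step and defer to the reference, which is precisely what the paper does; just be aware that your informal description of that step misstates where the reals end up.
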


 \begin{prop}
  Assume $\AC$ in $V$.
  Suppose that $\kappa$ is a weakly compact cardinal in $V$ and $G$ is a $\collkappa$-generic filter over $V$.
  Then $\VRVG \models (\mcalN, \subseteq) \npreceq_{pT} (\mcalM, \subseteq) \npreceq_{pT} (\omega^\omega, \leq^*)$.
 \end{prop}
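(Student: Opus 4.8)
The plan is to run a forcing-absoluteness argument based on Fact \ref{absolute_LR}, treating the two non-reductions $\nullideal \npreceq_{pT} \meagerideal$ and $\meagerideal \npreceq_{pT} \dominating$ separately. In each case I would argue by contradiction. A pre-Tukey map, via Lemma \ref{Tukey_map} together with the cofinality Lemmas \ref{Cohen_cofinal_M} and \ref{random_cofinal_M}, yields in $\VRVG$ a convergent map between the relevant cofinal families of comeager/conull sets, exactly as in the proofs of Lemma \ref{cofM_cofN_equiv} and Lemma \ref{d_cofM_equiv}. Such a map is a set of reals of the Solovay model, hence definable in $V[G]$ from a single real parameter $r$, and by the usual Solovay reflection this definition is computed by a fixed formula in every intermediate small generic extension containing $r$. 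The engine is that forcing with a c.c.c.\ poset $H$ over $V[G]$ does not change the first-order theory of the Solovay model: by Fact \ref{absolute_LR}, $\VRVG$ and $V(\mbbR^{V[G][H]})$ are elementarily equivalent, and the same formula-plus-parameter $r$ defines a convergent map extending the original one in $V(\mbbR^{V[G][H]})$. I would then add, by a suitable c.c.c.\ $H$, a real $z$ whose genericity contradicts the convergence that elementarity has just transferred.

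I would carry out $\nullideal \npreceq_{pT} \meagerideal$ in full first, since its natural forcing $\CD$ is c.c.c. Assume a convergent map $f$ from $(\{C^{L[x]}\mid x\in\omega^\omega\},\supseteq)$ to $\conullfilter$ exists in $\VRVG$, definable from $r$. Fix $x_0$ coding $r$ with $(2^{2^\omega})^{L[r]}$ countable in $L[x_0]$, take the conull target $R^{L[x_0]}$, and let $y^\ast \in \VRVG$ be a convergence witness, so that in $\VRVG$ every $z$ with $C^{L[z]}\subseteq C^{L[y^\ast]}$ satisfies $f(C^{L[z]})\subseteq R^{L[x_0]}$. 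Now force with $H=\CD$ over $V[G]$ to add a Cohen--Hechler real $z$ over $V[G]$. By the elementarity of Fact \ref{absolute_LR} (which, being proved through the weakly compact embedding, preserves formulas with real parameters from $\VRVG$) the transferred map is still convergent and the displayed implication holds in $V(\mbbR^{V[G][H]})$ with the same $r,x_0,y^\ast$. Since $z$ is Cohen--Hechler over $V[G]\supseteq L[y^\ast]$, Lemma \ref{Cohen_Hechler} gives $C^{L[z]}\subseteq C^{L[y^\ast]}$, whence $f(C^{L[z]})\subseteq R^{L[x_0]}$; and Lemma \ref{random_cofinal} gives $R^{L[r][z]}\subseteq f(C^{L[z]})$, so $R^{L[r][z]}\subseteq R^{L[x_0]}$. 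To contradict this I would reproduce the extraction from the proof of the $\nullideal \npreceq_{pT} \meagerideal$ theorem inside $V(\mbbR^{V[G][H]})$: as $\CD$ is c.c.c., $(2^\omega)^{L[r][z]}$ is countable in $L[x_0][z]$, so the union of the $L[r][z]$-coded null sets is null there and admits an avoider $w$, which is random over $L[r][z]$; but $w\in L[x_0][z]$, a $\sigma$-centered (hence random-real-free) extension of $L[x_0]$, so $w$ is not random over $L[x_0]$. Thus $w\in R^{L[r][z]}\setminus R^{L[x_0]}$, contradicting $R^{L[r][z]}\subseteq R^{L[x_0]}$.

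For $\meagerideal \npreceq_{pT} \dominating$ the scheme is identical with the roles of meager/comeager and null/conull swapped: assume a convergent $f\colon \dominating \to \comeagerfilter$ definable from $r$, fix $x_0$ with $(2^{2^\omega})^{L[r]}$ countable in $L[x_0]$, take the comeager target $C^{L[x_0]}$ with convergence witness $y^\ast$, and seek a real $z\geq^\ast y^\ast$ with $C^{L[r][z]}\not\subseteq C^{L[x_0]}$; using Lemma \ref{Cohen_cofinal} ($C^{L[r][z]}\subseteq f(z)$) this would contradict convergence. The escaping real $w$ would then be \emph{Cohen} over $L[r][z]$ but not Cohen over $L[x_0]$, obtained from a ground-model extension that adds no Cohen reals, exactly as Fact \ref{fact:Laver_basic}(2) is used in the Laver proof. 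The main obstacle is precisely here: the forcing that both dominates $y^\ast$ and adds no Cohen reals over $L[x_0]$ is the Laver forcing, which is \emph{not} c.c.c., so Fact \ref{absolute_LR} as stated does not transfer the convergence statement. I expect the resolution to be that over the weakly compact L\'{e}vy collapse the elementary equivalence in fact holds for proper forcing (this is where the hypothesis strengthens inaccessibility to weak compactness), so that Laver forcing over $V[G]$ becomes admissible; a purely c.c.c.\ substitute looks delicate, because c.c.c.\ dominating forcings (Hechler) add Cohen reals while c.c.c.\ forcings adding no Cohen reals (random) are not dominating, and reconciling these for the single fixed $y^\ast$ is the crux. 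As a safeguard I note that the Proposition also follows immediately by applying the already-proved theorems of this section inside $\VRVG$, since $(\star)$ holds there by Fact \ref{fact_Solovay_LR}(1).
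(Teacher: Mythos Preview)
Your plan has a genuine gap in the second non-reduction, and it misses the much simpler route the paper takes. The paper does not transfer a hypothetical pre-Tukey map \emph{forward} and then hunt for a single contradicting generic real; it transfers the parameter-free \emph{sentence} ``there is no pre-Tukey map'' \emph{backward}. Concretely: over $V[G]$ (which satisfies $\CH$) force with the c.c.c.\ measure algebra $\mbbB_{\omega_2}$ adding $\omega_2$ random reals. In $V[G][H]$ one has $\mathfrak{d}<\cof(\mcalM)$, so under $\AC$ there is no pre-Tukey map from $\meagerideal$ to $\dominating$; this non-existence passes to the inner model $V(\mbbR^{V[G][H]})$, and Fact~\ref{absolute_LR} then yields $\VRVG\models\meagerideal\npreceq_{pT}\dominating$. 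For $\nullideal\npreceq_{pT}\meagerideal$ one replaces $\mbbB_{\omega_2}$ by any c.c.c.\ forcing producing $\cof(\mcalM)<\cof(\mcalN)$. No Laver forcing, no tracking of a particular map, and only sentence-level elementary equivalence is needed.

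Your argument, by contrast, requires Fact~\ref{absolute_LR} to preserve formulas with real parameters $r,x_0,y^\ast$; you appeal to ``the weakly compact embedding'' behind its proof, but the Fact as stated gives only elementary equivalence of theories, so this step is not justified by what is available. More seriously, for $\meagerideal\npreceq_{pT}\dominating$ you need a generic $z\geq^\ast y^\ast$ whose extension adds no Cohen reals over $L[x_0]$, which drives you to Laver forcing---not c.c.c.---and hence outside the scope of Fact~\ref{absolute_LR}. Your conjectured extension to proper forcing may be true, but it is neither stated nor proved here, so that half of the argument is incomplete as written. Your safeguard via $(\star)$ and Fact~\ref{fact_Solovay_LR}(1) is correct but, as you yourself note, bypasses the entire point of this subsection, which is precisely to give an \emph{alternative} proof through forcing absoluteness.
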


 \begin{proof}
  We show that $\VRVG \models (\mcalM, \subseteq) \npreceq_{pT} (\omega^\omega, \leq^*)$.
  Note that $V[G]\models \CH$.
Let $H$ be a $\mbbB_{\omega_2}$-generic filter over $V[G]$, where $\mbbB_{\omega_2}$ is a c.c.c. forcing notion which adds $\omega_2$-many random reals.
Then $V[G][H] \models \mathfrak{d}<\cof(\mcalM)$, so
there is no pre-Tukey map from $(\mcalM, \subseteq)$ to $(\omega^\omega, \leq^*)$ in $V[G][H]$ and $V(\mbbR^{V[G][H]})$.
By Fact \ref{absolute_LR}, $\VRVG \models (\mcalM, \subseteq) \npreceq_{pT} (\omega^\omega, \leq^*)$.

We can show that $\VRVG \models (\mcalN, \subseteq) \npreceq_{pT} (\mcalM, \subseteq)$ in the same way, by using a c.c.c. forcing that forces $\cof(\mcalM)<\cof(\mcalN)$ instead of $\mbbB_{\omega_2}$.
 \end{proof}

 For $\LR$, we use the set forcing absoluteness of the theory of $\LR$ under large cardinal assumptions (Theorem 2.31 in \cite{woodin2010axiom}).
 Note that that large cardinal assumption implies $\AD$ in $\LR$.

 \begin{fact}[Woodin]
  Assume $\AC$ in $V$.
  Suppose that there exist proper class many Woodin cardinals. Let $\mbbP$ be a forcing notion and let $G$ be a $\mbbP$-generic filter over $V$.
  Then $\LR$ and $L(\mbbR^{V[G]})$ are elementarily equivalent. 
 \end{fact}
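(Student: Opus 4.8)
The plan is to recognize this as Woodin's generic absoluteness theorem for $\LR$ and to assemble its proof from two deep ingredients, both available under a proper class of Woodin cardinals. The first ingredient comes from the theorems of Martin, Steel, and Woodin: a proper class of Woodin cardinals yields $\AD^{\LR}$ and, more usefully here, the universal Baireness of every set of reals in $\LR$. That is, for each such $A$ and each poset $\mbbP$ there are trees $T,S$ with $A = p[T]$ and $\mbbR \setminus A = p[S]$ whose projections remain complementary after forcing with $\mbbP$. This is what guarantees that the truth of a statement about $\LR$ is captured by absolute tree representations and so is not disturbed by a collapse. The second ingredient is Woodin's stationary tower forcing.

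The core step uses the stationary tower. Fixing a Woodin cardinal $\delta$, forcing over $V$ with the countable stationary tower $\mbbQ_{<\delta}$ produces a generic elementary embedding $j \colon V \to M$ into a transitive class $M \subseteq V[g]$ with $\mathrm{crit}(j) = \omega_1^V$ and $M^\omega \cap V[g] \subseteq M$. Closure under $\omega$-sequences forces $\mbbR^M = \mbbR^{V[g]}$: every real of $V[g]$ is an $\omega$-sequence of elements of $M$ and hence lies in $M$, while $M \subseteq V[g]$ gives the reverse inclusion. Since $\Ord \subseteq M$ and $\mbbR^M = \mbbR^{V[g]}$, we get $\LR^M = \LR^{V[g]}$, so $j$ restricts to an elementary embedding $j \restrict \LR^V \colon \LR^V \to \LR^{V[g]}$. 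In particular $\LR^V$ is elementarily equivalent to $\LR^{V[g]}$ for every stationary-tower extension.

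To pass from tower extensions to an arbitrary $\mbbP$-generic $G$, I would choose $\delta$ Woodin with $\mbbP \in V_\delta$; small forcing preserves Woodinness, so $\delta$ remains Woodin in $V[G]$ and the core step applies over $V[G]$ as well. The key is then the absorption property of the stationary tower: since $\mbbQ_{<\delta}$ collapses everything below $\delta$ to be countable and is highly homogeneous below suitable conditions, one can arrange a single generic object realizing $V[g] = V[G][g']$, where $g$ is $\mbbQ_{<\delta}$-generic over $V$ and $g'$ is $\mbbQ_{<\delta}$-generic over $V[G]$. Applying the core step over $V$ and over $V[G]$ then gives the chain
\[
  \LR^V \equiv \LR^{V[g]} = \LR^{V[G][g']} \equiv \LR^{V[G]} ,
\]
where the middle equality is literal (the two tower extensions are the same model, hence have the same reals and the same $\LR$), and the outer equivalences are the core step. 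This yields that $\LR^V$ and $\LR^{V[G]}$ are elementarily equivalent.

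The hard part will be the stationary tower machinery together with the absorption argument: verifying the closure and real-correctness of the generic embedding $j$, and showing that $\mbbQ_{<\delta}^V$ below an appropriate condition absorbs $\mbbP$ so that the two tower extensions coincide and keep the reals, hence $\LR$, under control. These are the technically heaviest parts of Woodin's theory (with the derived-model analysis of the symmetric collapse lurking behind the reals computation), and in the present paper it is reasonable to invoke the statement as cited, namely Theorem 2.31 of \cite{woodin2010axiom}, rather than reproduce the full development; the sketch above indicates the route such a development takes.
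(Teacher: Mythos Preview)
The paper does not give a proof of this Fact at all; it is simply cited as Theorem~2.31 of \cite{woodin2010axiom} and then used as a black box. Your own final paragraph already anticipates this, so there is nothing to compare against.

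As for the sketch itself, it is the standard stationary-tower route and is essentially correct. Two small remarks. First, the universal Baireness paragraph is a red herring here: you list it as an ingredient but never use it, since the elementary embedding $j \colon V \to M$ from $\mbbQ_{<\delta}$ together with $\mbbR^M = \mbbR^{V[g]}$ already gives $\LR^V \equiv \LR^{V[g]}$ directly. Universal Baireness is an alternative path (via tree representations and Shoenfield-style absoluteness), not a step in the tower argument. Second, in the absorption step the literal equality $V[g] = V[G][g']$ is stronger than what one typically extracts or needs; what the argument actually delivers, and all that is required, is $\mbbR^{V[g]} = \mbbR^{V[G][g']}$, which suffices for $\LR^{V[g]} = \LR^{V[G][g']}$. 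With those adjustments your outline matches the proof in Larson's \emph{The Stationary Tower} and in \cite{woodin2010axiom}.
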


 By this fact and using set forcings which separate cardinal characteristics $\mathfrak{d}, \cof(\mcalM)$ and $\cof(\mcalN)$, we can show the following results on the pre-Tukey relation in $\LR$.
 \begin{prop}
  Assume $\AC$ in $V$.
  Suppose that there exist proper class many Woodin cardinals.
  Then $\LR\models (\mcalN, \subseteq) \npreceq_{pT} (\mcalM, \subseteq) \npreceq_{pT} (\omega^\omega, \leq^*)$. 
 \end{prop}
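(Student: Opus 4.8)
The plan is to follow the proof of the preceding proposition for the Solovay model in spirit, substituting Woodin's set-forcing absoluteness for the theory of $\LR$ in place of Kunen's c.c.c.-absoluteness (Fact \ref{absolute_LR}). Since Woodin's theorem applies to an arbitrary set forcing $\mbbP$, I am free to separate each of the two pre-Tukey non-reducibilities by forcing the corresponding strict inequality of cardinal characteristics, with no c.c.c.\ restriction. I would treat $(\mcalM, \subseteq) \npreceq_{pT} (\omega^\omega, \leq^*)$ and $(\mcalN, \subseteq) \npreceq_{pT} (\mcalM, \subseteq)$ separately and then combine them.

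For the first relation, I would fix in $V$ a set forcing $\mbbP$ and a generic $G$ with $V[G] \models \mathfrak{d} < \cof(\mcalM)$; concretely, first force $\CH$ and then add $\omega_2$-many random reals (the forcing $\mbbB_{\omega_2}$ of the preceding proposition), for which $\mathfrak{d} = \omega_1 < \omega_2 = \cof(\mcalM)$ holds by the standard computation in the random real model (random forcing is $\omega^\omega$-bounding, so $\mathfrak{d} = \omega_1$, while $\cof(\mcalM) = \omega_2$). In $V[G]$ the axiom of choice holds, so by Lemma \ref{preTukey_imply_Tukey} a pre-Tukey map yields a Tukey map after choosing a selector $f(d) \in \pi(d)$; hence $(\mcalM, \subseteq) \preceq_{pT} (\omega^\omega, \leq^*)$ would entail $(\mcalM, \subseteq) \preceq_{T} (\omega^\omega, \leq^*)$ and therefore $\cof(\mcalM) \leq \mathfrak{d}$, contradicting the choice of $G$. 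Thus $V[G] \models (\mcalM, \subseteq) \npreceq_{pT} (\omega^\omega, \leq^*)$.

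The next step is to push this down to $L(\mbbR^{V[G]})$ and then pull it back to $\LR^{V}$. Here I would use that $(\mcalM, \subseteq)$ is pre-Tukey equivalent, in any model of $\ZF + \DC$, to its cofinal restriction to the $F_\sigma$ meager sets, each coded by a real, and that the pre-Tukey property of a map into $(\omega^\omega, \leq^*)$ is an assertion whose witnesses and quantifiers range only over reals and absolute relations ($\subseteq$ on codes, $\leq^*$, and $e\downarrow_{\leq^*}$). Since $\mbbR^{V[G]}$ is common to $V[G]$ and its submodel $L(\mbbR^{V[G]})$, any pre-Tukey map lying in $L(\mbbR^{V[G]})$ is again a pre-Tukey map in $V[G]$; hence the non-existence established in $V[G]$ descends, giving $L(\mbbR^{V[G]}) \models (\mcalM, \subseteq) \npreceq_{pT} (\omega^\omega, \leq^*)$. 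As this is a first-order statement about $L(\mathbb{R})$, Woodin's set-forcing absoluteness for the theory of $\LR$ yields $\LR^{V} \models (\mcalM, \subseteq) \npreceq_{pT} (\omega^\omega, \leq^*)$. I would run the identical argument for $(\mcalN, \subseteq) \npreceq_{pT} (\mcalM, \subseteq)$, fixing instead a set forcing with $V[G'] \models \cof(\mcalM) < \cof(\mcalN)$ — a standard separation in the theory of cardinal characteristics — and using that a Tukey map $(\mcalN, \subseteq) \preceq_{T} (\mcalM, \subseteq)$ forces $\cof(\mcalN) \leq \cof(\mcalM)$, together with the analogous real-coded cofinal restrictions (the $G_\delta$ null and $F_\sigma$ meager sets).

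The step I expect to be the main obstacle is the descent: verifying carefully that the non-existence of a pre-Tukey map in the choiceful model $V[G]$ really does imply its non-existence in the submodel $L(\mbbR^{V[G]})$, and that ``$(\mcalN, \subseteq) \npreceq_{pT} (\mcalM, \subseteq)$'' is genuinely first-order over $L(\mathbb{R})$ so that Woodin's theorem applies. Both points hinge on replacing the ideals by their Borel, real-coded cofinal subideals, so that the maps themselves, their values as sets of reals, and the defining quantifiers all become absolute between $V[G]$ and $L(\mbbR^{V[G]})$. A secondary point is exhibiting the forcing witnessing $\cof(\mcalM) < \cof(\mcalN)$, but since Woodin's absoluteness imposes no chain condition, we may invoke any standard model of that inequality.
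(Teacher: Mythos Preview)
Your proposal is correct and follows essentially the same approach as the paper: force a strict inequality between the relevant cardinal characteristics, note that in the choiceful extension pre-Tukey coincides with Tukey so no pre-Tukey map exists, descend to $L(\mbbR^{V[G]})$, and then invoke Woodin's set-forcing absoluteness to transfer the conclusion to $\LR^V$. The paper's own proof here is a single sentence pointing back to the Solovay-model argument and to Woodin's fact; your elaboration of the descent step (passing to the real-coded cofinal subideals $\mcalM_\sigma$ and the $G_\delta$ null sets so that the pre-Tukey condition becomes an absolute statement about reals) is exactly the content the paper leaves implicit, and it is handled correctly.
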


\section{Questions}

In this paper, we introduce the pre-Tukey relation and show in Lemma \ref{Tukey_imply_preTukey} and \ref{preTukey_imply_Tukey} that it coincides with the Tukey relation under $\AC$.
Does the converse hold?

\begin{question}
Does the coincidence of the Tukey relation and the pre-Tukey relation imply $\AC$?
\end{question}

Between relational systems, which generalize ordered sets, there is a relation defined by the Galois-Tukey connection, a generalization of the Tukey reducibility. 
In the context of cardinal characteristics in $\ZFC$, such connections have been thoroughly studied.

\begin{question}
  Can the Galois-Tukey connection be suitably generalized to $\ZF$? 
  If so, what is the behavior of that relation in the models of $\ZF+\DC+(\star)$?
\end{question}

\section*{Acknowledgements}
We thank Yusuke Hayashi for valuable discussions on the definition and properties of the pre-Tukey relation.
We also thank Toshimichi Usuba for comments on the Tukey reducibility in the context without choice.
We appreciate Jonathan Schilhan's meaningful remarks on forcing absoluteness in Section 4.5.

\bibliographystyle{plain}
\bibliography{ref}

\end{document}